\documentclass[12pt]{article}%

\usepackage{amssymb}
\usepackage{amsfonts}
\usepackage{amsmath}
\usepackage[nohead]{geometry}
\usepackage[singlespacing]{setspace}
\usepackage[bottom]{footmisc}
\usepackage{indentfirst}
\usepackage{endnotes}
\usepackage{graphicx}%
\usepackage{rotating}
\usepackage{verbatim}
\usepackage{setspace}
\setcounter{MaxMatrixCols}{30}
\usepackage{multirow}
\usepackage{latexsym}

\usepackage{graphicx}
\usepackage{caption}
\usepackage{subcaption}

\usepackage{amsthm}
\usepackage{makeidx}
\usepackage{fancyhdr}
\usepackage{type1cm}

 \usepackage[authoryear]{natbib}

\newcommand{\bb}{\mathrm{\bf b}}
\newcommand{\bff}{\mathrm{\bf f}}
\newcommand{\bx}{\mathrm{\bf x}}
\newcommand{\by}{\mathrm{\bf y}}

\newcommand{\bA}{\mathrm{\bf A}}
\newcommand{\ba}{\mathrm{\bf a}}
\newcommand{\bu}{\mathrm{\bf u}}
\newcommand{\bv}{\mathrm{\bf v}}
\newcommand{\be}{\mathrm{\bf e}}
\newcommand{\bB}{\mathrm{\bf B}}
\newcommand{\bZ}{\mathrm{\bf Z}}

\newcommand{\bD}{\mathrm{\bf D}}
\newcommand{\bE}{\mathrm{\bf E}}
\newcommand{\bF}{\mathrm{\bf F}}
\newcommand{\bK}{\mathrm{\bf K}}
\newcommand{\bW}{\mathrm{\bf W}}
\newcommand{\bG}{\mathrm{\bf G}}
\newcommand{\bM}{\mathrm{\bf M}}
\newcommand{\bh}{\mathrm{\bf h}}
\newcommand{\bH}{\mathrm{\bf H}}
\newcommand{\bI}{\mathrm{\bf I}}

\newcommand{\bV}{\mathrm{\bf V}}

\newcommand{\br}{\mathrm{\bf r}}
\newcommand{\bR}{\mathrm{\bf R}}

\newcommand{\bU}{\mathrm{\bf U}}
\newcommand{\bX}{\mathrm{\bf X}}
\newcommand{\bY}{\mathrm{\bf Y}}

\newcommand{\bdelta}{\mbox{\boldmath $\delta$}}

\newcommand{\bxi}{\mbox{\boldmath $\xi$}}
\newcommand{\beps}{\mbox{\boldmath $\varepsilon$}}
\newcommand{\bmu}{\mbox{\boldmath $\mu$}}
\newcommand{\bLambda}{\mbox{\boldmath $\Lambda$}}

\newcommand{\bGamma}{\mbox{\boldmath $\Gamma$}}

\newcommand{\bSigma}{\mbox{\boldmath $\Sigma$}}
\newcommand{\bOmega}{\mbox{\boldmath $\Omega$}}

\newcommand{\cov}{\mathrm{cov}}

\newcommand{\tr}{\mathrm{tr}}
\newcommand{\diag}{\mathrm{diag}}

\newcommand{\bw}{\mbox{\bf w}}

\newcommand{\var}{\mathrm{var}}

\def \Var {\mbox{Var}}
\def \diag {\mbox{diag}}
\def\unif{\mbox{Unif}}
\def\FDP{\mbox{FDP}}
\def\RE{{\rm RE}}
\newcommand{\bTheta}{\mbox{\boldmath $\Theta$}}

\newcommand{\bfeta}{\mbox{\boldmath $\eta$}}
\newcommand{\beq}{\begin{equation}}
\newcommand{\eeq}{\end{equation}}
\newcommand{\beqn}{\begin{eqnarray}}
\newcommand{\eeqn}{\end{eqnarray}}
\newcommand{\beqnn}{\begin{eqnarray*}}
\newcommand{\eeqnn}{\end{eqnarray*}}

\numberwithin{equation}{section}
\theoremstyle{plain}
\newtheorem{thm}{Theorem}[section]

\newtheorem{lem}{Lemma}[section]

\newtheorem{assum}{Assumption}[section]
\theoremstyle{definition}

\makeatletter
\def\@biblabel#1{\hspace*{-\labelsep}}
\makeatother
\geometry{left=1in,right=1in,top=1.00in,bottom=1.0in}
\begin{document}

 \title{Asymptotics of Empirical Eigen-structure for Ultra-high Dimensional Spiked Covariance Model}
\author{Jianqing Fan
\thanks{Address: Department of ORFE, Sherrerd Hall, Princeton University, Princeton, NJ 08544, USA, e-mail: \textit{jqfan@princeton.edu},
\textit{weichenw@princeton.edu}. The research was partially supported by NSF grants
DMS-1206464 and DMS-1406266 and NIH grants R01-GM072611-10
and NIH R01GM100474-04.}$\; ^\dag$ and Weichen Wang$^*$
\medskip\\{\normalsize $^*$Department of Operations Research and Financial Engineering,  Princeton University}
\medskip\\{\normalsize $^\dag$ Bendheim Center for Finance, Princeton University}}
 
\date{}

\maketitle

\sloppy

\onehalfspacing
 
\begin{abstract}
We derive the asymptotic distributions of the spiked eigenvalues and eigenvectors under a generalized and unified asymptotic regime, which takes into account the spike magnitude of leading eigenvalues, sample size, and dimensionality. This new regime allows high dimensionality and diverging eigenvalue spikes and provides new insights into the roles the leading eigenvalues, sample size, and dimensionality play in principal component analysis.  The results are proven by a technical device, which swaps the role of rows and columns and converts the high-dimensional problems into  low-dimensional ones. Our results are a natural extension of those in \cite{Pau07} to more general setting with new insights and solve the rates of convergence problems in \cite{SheSheZhuMar13}. They also reveal the biases of the estimation of leading eigenvalues and eigenvectors by using principal component analysis, and lead to a new covariance estimator for the approximate factor model, called shrinkage principal orthogonal complement thresholding (S-POET), that corrects the biases.  Our results are successfully applied to outstanding problems in estimation of risks of large portfolios and false discovery proportions
for dependent test statistics and are illustrated by simulation studies.
\end{abstract}

\textbf{Keywords:} Asymptotic distributions; Principal component analysis; Spiked covariance model; Ultra-high dimension; Diverging eigenvalues; Approximate factor model; Relative risk management; False discovery proportion.

\pagebreak%
\doublespacing

\onehalfspacing

\section{Introduction}

Principal Component Analysis (PCA) has widely been used as a powerful tool for dimensionality reduction and data visualization.
Its theoretical properties such as the consistency and asymptotic distributions of empirical eigenvalues and eigenvectors are  challenging especially in high dimensional regime.
For the past half century substantial amount of efforts have been devoted to understanding empirical eigen-structures.  An early effort is \cite{And63} who established the asymptotic normality of
eigenvalues and eigenvectors under the classical regime with  large sample size $n$ and fixed dimension $p$. However, as dimensionality diverges at the same rate as the sample size, sample covariance matrix is a notoriously bad estimator with substantial different eigen-structure from the population one.
A lot of recent literatures make the endeavor to understand the behaviors of eigenvalues and eigenvectors under high dimensional regime where both $n$ and $p$ go to infinity. See for example \cite{BaiAroPec05, Bai99, Pau07, JohLu09, Ona12, SheSheZhuMar13} and many related papers.  For additional developments and references, see \cite{BaiSil09}.

Most of studies focus on the situations where signals are weak or semi-weak \citep{Ona12} with leading asymptotic eigenvalues bounded \citep{Pau07, BaiSil09} or slowly growing \citep{Ona12}.  However, \cite{FanLiaMin13} shows that for factor models with pervasive factors, the leading eigenvalues can grow linearly with the dimensionality and hence their corresponding eigenvectors can be consistently estimated as long as sample size diverges.  This leads to the question of how the asymptotics of engen-structure depends on the interplay of spike magnitude of leading eigenvalues, dimensionality, and sample size.  An interesting study on this topic is \cite{SheSheZhuMar13}, which focuses only on the consistency of the problem.  The question then arises naturally on the rates of convergence and asymptotic structures of empirical eigenvalues and eigenvectors.  This is the subject of this study.

In this paper, we consider a high dimensional spiked covariance model with the first several eigenvalues significantly larger than the rest. Typically, the spike part is of importance and of interest. We provides new understanding on how the spiked empirical eigenvalues and eigenvectors fluctuate around their theoretical counterparts and what their asymptotic biases are. For the spiked covariance model, three quantities play an essential role in determining the asymptotic behaviors of empirical eigen-structure: the sample size $n$, the dimension $p$, and the magnitude of leading eigenvalues $\{\lambda_j\}_{j=1}^m$. Theoretical properties of PCA have been investigated from three different perspectives.

The first angle is through a low-rank plus sparse decomposition, where the covariance matrix is perceived as the sum of a low-rank and a sparse matrix. The low-rank part contributes to the signal to be recovered whereas the sparse part serves as noise. For example in \cite{FanFanLv08}, the low-rank matrix corresponds to the dependence induced by the common factors or covariates whereas the sparse matrix corresponds to the idiosyncratic noise. In noiseless setting, \cite{CanLiMaWri11} considered the principal component pursuit and showed that it can recover the decomposition structure under the incoherence condition. \cite{ChaSanParWil11} also studied the sufficient condition for exact recovery of the low-rank and sparse matrices. The noisy decomposition recover was considered more thoroughly by \cite{AgaNegWai12}. In addition, a large amount of literature has contributed to the topic of sparse PCA, for example \cite{AmiWai09, VuLei12, BirJohNadPau13, BerRig13, Ma13}, which leverages the extra assumption on the sparsity of eigenvectors. Specifically, \cite{CaiMaWu13} studied the minimax optimal rates for estimating eigenvalues and eigenvectors of spiked covariance matrices with jointly $k$-sparse eigenvectors. This type of work assumes bounded eigenvalues, which limit the signals we can get from the data. Correspondingly, those works require additional eigenvector structure to reduce the possibility of noise accumulation such as incoherence or jointly $k$-sparse or other similar conditions. In this paper, thanks to the diverging eigenvalue regime we will consider, our conclusions will not rely on additional structure of eigenvectors, which can be hard to verify in practice.

A different line of efforts is to analyze PCA through random matrix theories, where it is
typically assumed $p/n \to \gamma \in (0, \infty)$ with bounded spike sizes. It is well known that if the true covariance matrix is identity, the empirical spectral distribution converges almost surely to the Marcenko-Pastur distribution \citep{Bai99}  and when $\gamma < 1$ the largest and smallest eigenvalues converge almost surely to $(1+\sqrt{\gamma})^2$ and $(1-\sqrt{\gamma})^2$ respectively \citep{BaiYin93, Joh01}. If the true covariance structure takes the form of a spiked matrix, \cite{BaiAroPec05} showed that the asymptotic distribution of the empirical eigenvalues exhibit an $n^{2/3}$ scaling when the eigenvalue lies below a threshold $1+\sqrt{\gamma}$, and an $n^{1/2}$ scaling when it is above the threshold. For the case where we have the regular scaling, \cite{Pau07} investigated the asymptotic behavior of the corresponding empirical eigenvectors and showed that the major part of an eigenvector which corresponds to the spiked eigenvalues is normally distributed with regular scaling $n^{1/2}$. The convergence of principal component scores under this regime was considered by \cite{LeeZouWri10}.
The same random matrix regime has also been considered by \cite{Ona12} in studying the principal component estimator for high-dimensional factor models. More recently, \cite{KolLou14a, KolLou14b} revealed a profound link of concentration bounds of empirical eigen-struecture with the effective rank defined as $\bar r = \tr(\bSigma)/\lambda_1$ \citep{Ver10}. Their results extend the regime of bounded eigenvalues to more general setting, although the asymptotic results in most cases still rely on the assumption $\bar r = o(n)$. In this paper, we consider the regime $p/(n\lambda_1) < \infty$, which implies $\bar r = O(n)$. More discussions will be given in Section \ref{sec3}.

Deviating from the classical random matrix and sparse PCA literature, we consider the ultra-high dimensional regime allowing $p/n \to \infty$. If $p/n\to \infty$, to ensure sufficiently strong signal for PCA, it is natural to also have the spike sizes go to infinity, namely, $\lambda_j \to \infty$ for the first $m$ leading eigenvalues.
This leads to the third perspective for understanding PCA from this ultra high dimensional setting. \cite{SheSheZhuMar13} adopted this point of view and considered the regime of $p/(n\lambda_j) \to c_j$ where $0 \le c_j< \infty$ for leading eigenvalues. This is more general than the bounded eigenvalue condition. Specifically if eigenvalues are bounded, we require the ratio $p/n$ converges to a bounded constant. On the other hand, if the dimension is much larger than the sample size, we offset the dimensionality by assuming increased signals. In particular, the pervasive factor model considered in economics and finance factor model corresponds to $c_j = 0$ with the pervasive leading eigenvalues $\lambda_j \asymp p$, see for example \cite{FanLiaMin13, FanLiaWan14, StoWat02, Bai03, BaiNg02}.  The weak factor model considered by \cite{Ona12} also implies $c_j = 0$, with $p/n$ bounded and $\lambda_j \asymp p^\theta$ for some $\theta \in (0, 1)$. \cite{HalMarNee05, JunMar09} started the research of high dimension low sample size (HDLSS) regime. With $n$ fixed, \cite{JunMar09} concluded that consistency of leading eigenvalues and eigenvectors is granted if $\lambda_j \asymp p^\theta$ for $\theta > 1$, which also corresponds to $c_j = 0$.
\cite{SheSheZhuMar13} revealed an interesting fact that when $c_j \ne 0$, spiked sample eigenvalues almost surely converges to a biased quantity of the true eigenvalues; furthermore the corresponding sample eigenvectors show an asymptotic conical structure. We will consider the same regime as theirs, but focus more on the asymptotic distributions of the eigen-structure, which was not covered in their paper, and under more relaxed conditions. Our results can be seen as a natural extension of \cite{Pau07} to ultra high dimensional setting.

In addition to the different regimes we take on, we also introduce a simple technique to for our technical proofs. The idea is to flip the roles of rows and columns and treat $p$ as the sample size and $n$ as the dimension. When $p$ is higher than $n$, sample covariance is clearly degenerate. Switching the roles of $n$ and $p$ allows us to utilize the existing results on eigen-structures. To be specific, if we have $n$ samples generated from $N({\bf 0}, \bD)$ where $\bD = \diag(d_1,\dots,d_p)$ is diagonal, then all the information we have is just an $n$ by $p$ data matrix with independent entries. We can simply treat the data as $p$ independent vectors of dimension $n$ each with distribution $N({\bf 0}, d_i \bI_n)$. Even when the data are not normally distributed and hence $p$ $n$-dimensional vectors are then not independent, the idea is still  powerful and leads to better understanding of relationship between high and low dimensionality. The simple trick has been used to derive asymptotic results of empirical eigenvalues in recent papers such as \cite{SheSheZhuMar13, YatAos12, YatAos13}. One of our contributions lies in successful application of the trick to study the empirical leading eigenvectors.

The rest of the paper is organized as follows. Section \ref{sec2} introduces the notations, assumptions, and an important fact which serves as basis of our proofs. The fact will help unravel the relationship between high and low dimensions. Sections \ref{sec3.1} and \ref{sec3.2} devote to the theoretical results of the sample eigenvalues and eigenvectors of the spiked covariance matrix under our asymptotic regime. In Section \ref{sec4}, we discuss several applications of the theories in the previous section. Firstly a new covariance estimator for the approximate factor model, named shrinkage principal orthogonal complement thresholding (S-POET),  is proposed which corrects the biases of empirical eigenvalues. Secondly, S-POET will be successfully applied to outstanding problems in estimation of risks of large portfolios and false discovery proportions for dependent test statistics. For both problems, the typical assumption on the signal strength of leading eigenvalues in order to deploy factor analysis is relaxed due to our new results in Section \ref{sec3}. In Section \ref{sec5}, simulations are conducted to illustrate the theoretical results at the finite sample. The proofs for Section \ref{sec3} are provided in Section \ref{sec6} and those for Section \ref{sec4} are relegated to the supplementary material.

\section{Assumptions and a simple fact}
\label{sec2}
Asssume that $\{\bY_i\}_{i=1}^n$ is a sequence of i.i.d. random variables with
zero mean and covariance matrix $\bSigma_{p\times p}$.
Let $\lambda_1,  \dots, \lambda_p$ be the eigenvalues of $\bSigma$
in descending order. We consider the spiked covariance model as follows.

\begin{assum} \label{assump1}
$\lambda_1 > \lambda_2 > \dots > \lambda_m > \lambda_{m+1} \ge \dots \ge \lambda_p > 0$, where the non-spiked eigenvalues are bounded, i.e. $c_0 \le \lambda_j \le C_0, j>m$ for constants $c_0, C_0 > 0$ and the spiked eigenvalues are well separated, i.e. $\exists \delta_0 > 0$ such that $\min_{j \le m} (\lambda_j - \lambda_{j+1})/\lambda_j \ge \delta_0$.
\end{assum}

The eigenvalues are divided into the spiked ones and bounded non-spiked ones. We do not have specific order assumptions on the leading eigenvalues nor require them to diverge. Thus, our results in Section \ref{sec3} are applicable to both bounded and diverging leading eigenvalues; if diverging, they can have different diverging rates.
For simplicity,  we only consider distinguishable eigenvalues (multiplicity 1) for the largest $m$ eigenvalues and a fixed number $m$, independent of
$n$ and $p$.

The spiked covariance model is motivated by the factor model $\by = \bB \bff + \beps$ considered by \cite{FanLiaMin13} as follows.  Assume without loss of generality that $\var(\bff) = \bI_m$, the $m \times m$ identity matrix.  Then, the model implied covariance matrix $\bSigma = \bB \bB' + \bSigma_\varepsilon$, where $\bSigma_\varepsilon = \var(\beps)$.  If the factor loadings $\{\bb_i\}$ (the transpose of rows of $\bB$) are an i.i.d. sample from a population with mean zero and covariance $\bSigma_b$, then by the law of large numbers, $ p^{-1} \bB' \bB = p^{-1} \sum_{i=1}^p \bb_i \bb_i' \to \bSigma_b$.  In other words, the eigenvalues of $\bB \bB'$ are approximately
\[
  p \lambda_1(\bSigma_b) (1+o(1)), \cdots,  p \lambda_m(\bSigma_b) (1+o(1)), 0, \cdots, 0,
\]
where $\lambda_j(\bSigma_b)$ is the $j^{th}$ eigenvalue of $\bSigma_b$.
If we assume that $\|\bSigma_\varepsilon\|$ is bounded, then by Weyl's theorem, we conclude that
\begin{equation} \label{eq2.1}
  \lambda_j = p \lambda_j(\bSigma_b) (1+o(1)),  \quad \mbox{for } j = 1, \cdots, m,
\end{equation}
and the remaining is bounded.

In the spiked covariance models, three essential factors come into play: the sample size $n$, dimension $p$ and the spikeness $\lambda_j$'s. The following relationship is assumed as in \cite{SheSheZhuMar13}.

\begin{assum}
\label{assump2}
Assume $p>n$. For the spiked part $1 \le j \le m$, $c_j = p/(n \lambda_j)$ is bounded, and for the non-spiked part, $(p-m)^{-1} \sum_{j = m+1}^p \lambda_j = \bar c + o(n^{-1/2})$.
\end{assum}

We allow $p/n \to \infty$ in any manner, though $\lambda_j$ also needs also grow fast enough to ensure bounded $c_j$.  In particular, $c_j = o(1)$ is allowed as in the factor model.
We do not assume the non-spiked eigenvalues are identical, as in most spiked covariance model literature (e.g. \cite{Pau07, JohLu09}).

By spectral decomposition, $\bSigma = \bGamma \bLambda \bGamma'$, where the orthonormal matrix $\bGamma$ is constructed by the eigenvectors of $\bSigma$ and $\bLambda = \diag(\lambda _1, \dots,\lambda _p)$. Let $\bX_i = \bGamma' \bY_i$. Since the empirical eigenvalues are invariant and the empirical eigenvectors are equivariant under an orthonormal transformation, we focus the analysis on the transformed domain of $\bX_i$ and the results can be translated into the original data.  Note that
$\var(\bX_i) = \bLambda$.  Let $\bZ_i = \bLambda^{-1/2} \bX_i$ be the elementwise standardized random vector.

\begin{assum}
\label{assump3}
$\{\bZ_i\}_{i=1}^n$ are i.i.d copies of $\bZ$.  The standardized random vector $\bZ = (Z_1, \dots, Z_p)$ is sub-Gaussian with independent entries of mean zero and variance one.  The sub-Gaussian norms of all components are uniformly bounded:
$\max_{j} \| Z_{j} \|_{\psi_2} \leq C_0$, where
$\| Z_{j} \|_{\psi_2} = \sup_{q \geq 1} q^{-1/2} (E|Z_{j}|^q)^{1/q}$.
\end{assum}

\noindent

Since $\Var(\bX_i) = \diag(\lambda _1,\lambda _2, \dots,\lambda _p)$, the first $m$ population eigenvectors are simply unit vectors $\be_1,\be_2,\dots, \be_m$. Denote the $n$ by $p$ transformed data matrix by ${\bX} = (\bX_1, \bX_2, \dots, \bX_n)'$. Then the sample covariance matrix is
\[
{\hat \bSigma}_{p \times p}= \frac{1}{n} \bX' \bX = \frac{1}{n} \sum_{i=1}^n \bX_i \bX_i'\,,
\]
whose eigenvalues are denoted as $\hat \lambda_1, \hat \lambda_2, \dots, \hat \lambda_p$ ($\hat \lambda_j = 0$ for $j > n$) with corresponding eigenvectors $\hat\bxi_1,\hat \bxi_2,\dots, \hat\bxi_p$. Note that the empirical eigenvectors of data $\bY_i$'s are $\hat \bxi_j^{(Y)} = \bGamma \hat\bxi_j$.

Let $\bZ_j$ be the $j^{th}$ column of the standardized $\bX$.  Then each $\bZ_j$ has i.i.d sub-Gaussian entries with zero mean and unit variance. Exchanging the role of rows and columns, we get the $n$ by $n$ Gram matrix
\[
\widetilde \bSigma_{n \times n}  = \frac{1}{n} {\bX} {\bX}' = \frac{1}{n} \sum \limits_{j=1}^p \lambda_j \bZ_j \bZ_j'\,,
\]
with the same nonzero eigenvalues $\hat \lambda_1, \hat \lambda_2, \dots, \hat \lambda_n$ as $\hat\bSigma$ and the corresponding eigenvectors $\bu_1,\bu_2,\dots, \bu_n$.  It is well known that for $i=1,2,\dots,n$
\beq
\label{VecRel}
\hat\bxi_i  = (n \hat \lambda_i)^{-1/2} \bX' \bu_i \;\; \text{and} \;\; \bu_i =  (n \hat \lambda_i)^{-1/2} \bX \hat\bxi_i \,,
\eeq
while the other eigenvectors of $\hat \bSigma$ constitute a $(p-n)$-dimensional orthogonal complement of  $\hat\bxi_1,\dots,\hat\bxi_n$.

By using this simple fact, for the specific case with $c_0 = C_0 = 1$ in Assumption \ref{assump1}, $\lambda_j = 1$ for $j > m$ in Assumption \ref{assump2}, and Gaussian data in Assumption \ref{assump3}, \cite{SheSheZhuMar13} showed that
\[
\frac{\hat \lambda_j}{\lambda_j} \overset{\text{a.s.}} \to 1+ c_j\,, \;1\le j \le m \,;
\]
and
\[
\left|  \langle \hat\bxi_j, \be_j \rangle \right| \overset{\text{a.s.}} \to  (1+ c_j)^{-\frac 1 2} \,,
\]
where $\langle \ba,\bb \rangle$ denotes the inner product of two vectors. However, they fail to establish any results on convergence rates or asymptotic distributions of the empirical eigen-structure.  This motivates the current paper.

The aim of this paper is to establish the asymptotic normality of the empirical eigenvalues and eigenvectors under more relaxed conditions. Our results are a natural extension of  \cite{Pau07} to more general setting with new insights, where the asymptotic normality of sample eigenvectors is derived using complicated random matrix techniques for Gaussian data under the regime of $p/n \to \gamma \in [0,1)$. Compared to them, our proof, based on the relationship (\ref{VecRel}), is much simpler and insightful for understanding the behavior of ultra high dimensional PCA.

Here are some notations that we will use in the paper.
For a general matrix $\bM$, we denote its matrix entry-wise max norm as $\|\bM\|_{\max} = \max_{i,j}\{|M_{i,j}|\}$ and define the quantities $\|\bM\| = \lambda_{\max}^{1/2}(\bM'\bM)$, $\|\bM\|_F = (\sum_{i,j} M_{i,j}^2)^{1/2}$, $\|\bM\|_{\infty} = \max_{i} \sum_j |M_{i,j}|$ to be its spectral, Frobenius and induced $\ell_{\infty}$ norms.
If $\bM$ is symmetric, we define $\lambda_{j}(\bM)$ to be the $j^{th}$ largest eigenvalue of $\bM$ and $\lambda_{\max}(\bM)$, $\lambda_{\min}(\bM)$ to be the maximal and minimal eigenvalues respectively. We denote $\tr(\bM)$ as the trace of $\bM$.
For any vector $\bv$, its $\ell_2$ norm is represented by $\|\bv\|$ while $\ell_1$ norm is written as $\|\bv\|_1$. We use $\diag(\bv)$ to denote the diagonal matrix with the same diagonal entries as $\bv$. For two random vectors $\ba, \bb$ of the same length, we say $\ba = \bb + O_P(\delta)$ if $\|\ba - \bb\| = O_P(\delta)$ and $\ba = \bb + o_P(\delta)$ if $\|\ba - \bb\| = o_P(\delta)$. We denote $\ba \overset{d} \Rightarrow \mathcal L$ for some distribution $\mathcal L$ if there exists $\bb \sim \mathcal L$ such that $\ba = \bb + o_P(1)$. In the following, $C$ is a generic constant that may differ from line to line.

\section{Asymptotic behavior of empirical eigen-structure} \label{sec3}
\subsection{Asymptotic normality of empirical eigenvalues}
\label{sec3.1}
Let us first study the behavior of the first $m$ empirical eigenvalues of $\hat \bSigma$. Denote by $\lambda_j(\bA)$ the $j^{th}$ largest eigenvalue of matrix $\bA$ and recall that $\hat\lambda_j = \lambda_j (\hat\bSigma)$. We have the following asymptotic normality of $\hat\lambda_j$.

\begin{thm}
\label{thm3.1}
Under Assumptions \ref{assump1} - \ref{assump3}, $\{\hat \lambda_j \}_{j=1}^m$'s have independent limiting distributions.  In addition,
\begin{equation} \label{eq3.1}
    \sqrt{n} \Big\{ \frac{\hat \lambda_j}{\lambda_j} - \Big(1+ \bar c c_j + O_P(\lambda_j^{-1} \sqrt{p/n})\Big) \Big\} \overset{d} \Rightarrow N(0,\kappa_j-1)\,,
\end{equation}
where $\kappa_j$ is the kurtosis of $X_j$.
\end{thm}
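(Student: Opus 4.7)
The plan is to exploit the duality noted in \eqref{VecRel}: $\hat\lambda_j$ agrees with the $j$-th eigenvalue of the $n \times n$ dual Gram matrix $\widetilde\bSigma = n^{-1} \bX\bX' = \sum_{j=1}^p (\lambda_j/n)\, \bZ_j \bZ_j'$ for $j \le n$. I would split this into a spike part and a noise part,
\[
\widetilde\bSigma = \underbrace{\frac{1}{n}\sum_{j=1}^m \lambda_j \bZ_j \bZ_j'}_{=:\,\bA} \;+\; \underbrace{\frac{1}{n}\sum_{j=m+1}^p \lambda_j \bZ_j \bZ_j'}_{=:\,\bN},
\]
and analyze each separately before combining them by Weyl's inequality.

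\textbf{Noise piece.} By Assumption \ref{assump2}, $\mathbb{E}[\bN] = n^{-1}\big(\sum_{j>m}\lambda_j\big)\bI_n = \bar c\,(p/n)\,\bI_n + o(p\,n^{-3/2})\,\bI_n$. A matrix concentration argument (e.g.\ matrix Bernstein / $\varepsilon$-net for sub-Gaussian quadratic forms) applied to the centered sum $\bN - \mathbb{E}[\bN] = n^{-1}\sum_{j>m}\lambda_j(\bZ_j\bZ_j' - \bI_n)$, using $\lambda_j = O(1)$ for $j>m$ and independence of the $\bZ_j$'s, yields $\|\bN - \mathbb{E}[\bN]\| = O_P(\sqrt{p/n})$. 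Since $\bar c(p/n)\bI_n$ commutes with everything, Weyl gives
\[
\hat\lambda_j = \lambda_j(\bA) + \bar c\,(p/n) + O_P(\sqrt{p/n}), \qquad 1 \le j \le m.
\]

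\textbf{Spike piece.} The nonzero eigenvalues of $\bA$ coincide with those of the $m \times m$ matrix $\bM := \bLambda_m^{1/2}(\bW'\bW/n)\bLambda_m^{1/2}$, where $\bW = [\bZ_1,\dots,\bZ_m] \in \mathbb{R}^{n \times m}$ and $\bLambda_m = \diag(\lambda_1,\dots,\lambda_m)$. Writing $\bW'\bW/n = \bI_m + \bR$, the diagonal entries $R_{jj} = n^{-1}\sum_{i}(Z_{ij}^2 - 1)$ are $O_P(n^{-1/2})$ with variance $(\kappa_j-1)/n$, and the off-diagonals $R_{ij}$ are also $O_P(n^{-1/2})$ with mean zero. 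Decompose $\bM = \bLambda_m + \bLambda_m^{1/2}\bR\bLambda_m^{1/2}$. Because $m$ is fixed and the spike eigenvalues satisfy the gap condition $(\lambda_j - \lambda_{j+1})/\lambda_j \ge \delta_0$, a first-order eigenvalue perturbation (Rayleigh quotient at $\be_j$ plus a second-order remainder controlled by the off-diagonal mass over the gap) gives
\[
\lambda_j(\bA) \;=\; \lambda_j\bigl(1 + R_{jj}\bigr) \;+\; O_P(\lambda_j/n).
\]

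\textbf{Assembly.} Combining the two displays and dividing by $\lambda_j$,
\[
\frac{\hat\lambda_j}{\lambda_j} \;=\; 1 + R_{jj} + \bar c\, c_j + O_P\!\bigl(\lambda_j^{-1}\sqrt{p/n}\bigr) + O_P(1/n),
\]
so $\sqrt{n}\bigl\{\hat\lambda_j/\lambda_j - (1 + \bar c c_j + O_P(\lambda_j^{-1}\sqrt{p/n}))\bigr\} = \sqrt{n}R_{jj} + o_P(1)$. Since $\sqrt{n}R_{jj} = n^{-1/2}\sum_{i=1}^n(Z_{ij}^2 - 1)$ is a sum of i.i.d.\ centered variables with variance $\kappa_j-1$, the classical CLT delivers the limit $N(0,\kappa_j-1)$. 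Joint independence across $j=1,\dots,m$ is immediate from independence of the columns $\bZ_1,\dots,\bZ_m$.

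\textbf{Main obstacle.} The delicate step is the sharp operator-norm bound $\|\bN - \mathbb{E}[\bN]\| = O_P(\sqrt{p/n})$ in the ultra-high dimensional regime $p/n \to \infty$, with only sub-Gaussian (not Gaussian) entries and unequal weights $\{\lambda_j\}_{j>m}$; standard random-matrix identities for the Gaussian/identity-weight case must be replaced by a concentration argument that respects both the weight heterogeneity and the non-Gaussian tails. A secondary technical point is ensuring that the second-order perturbation terms for $\lambda_j(\bA)$ are genuinely $O_P(\lambda_j/n)$ and thus dominated by the $O_P(\lambda_j/\sqrt{n})$ CLT fluctuation; this is where the separation condition $\delta_0 > 0$ enters critically to bound cross-terms of the form $\lambda_i\lambda_j R_{ij}^2/(\lambda_i - \lambda_j)$.
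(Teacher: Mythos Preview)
Your proposal is correct and follows essentially the same route as the paper: the same spike/noise decomposition of the dual Gram matrix $\widetilde\bSigma = \bA + \bN$, Weyl's inequality to combine them, a weighted Davidson--Szarek type concentration bound for $\bN$ (the paper's Lemma~\ref{lem6.1}, which is precisely the ``main obstacle'' you single out), and a reduction of $\bA$ to the $m\times m$ matrix $\bLambda_m^{1/2}(\bW'\bW/n)\bLambda_m^{1/2}$ followed by a CLT for $\sqrt{n}R_{jj}$. The only cosmetic difference is that the paper handles the spike piece by invoking and extending Anderson's (1963) expansion $H_j = U_{jj} - n^{-1/2}(M_{jj}-\lambda_j W_{jj})$, whereas you phrase the same computation as a direct first/second-order eigenvalue perturbation; both arguments hinge on exactly the bound you flag, $\lambda_j\lambda_k R_{jk}^2/|\lambda_j-\lambda_k| = O_P(\lambda_j/n)$ via the separation constant $\delta_0$.
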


The theorem shows that the bias of $\hat \lambda_j/\lambda_j$ is $\bar{c} c_j + O_P(\lambda_j^{-1} \sqrt{p/n})$. The second term is dominated by the first term since $p>n$ and it is of order $o_P(n^{-1/2})$ if $\sqrt{p}= o(\lambda_j)$.  The latter assumption is satisfied by the strong factor model in \cite{FanLiaMin13} and a part of weak factor model in \cite{Ona12}.  To get the asymptotically unbiased estimate, it requires $c_j = p/(n \lambda_j) \to 0$ for $j \le m$.
This result is more general than that of \cite{SheSheZhuMar13} and sheds a similar light to that of \cite{KolLou14a, KolLou14b} i.e. $\|\hat\bSigma - \bSigma\| \to 0$ almost surely if and only if the effective rank $\bar r = \tr(\bSigma)/\lambda_1$ is of order $o(n)$, which is true when $c_1 = o(1)$.
\cite{YatAos12, YatAos13} employed a similar technical trick and gave a comprehensive study on the asymptotic consistency and distributions of the eigenvalues.  They got various similar results  under different conditions from ours.  Our framework is more general and bias reduction can also be made by using a different method; see Section \ref{sec4.2}.
In addition, under the typical spiked covariance model as in \cite{BaiAroPec05}, \cite{JohLu09} and \cite{Pau07}, where it is assumed $\lambda_j = c_0 = C_0, j > m$, we have $\bar c = c_0$ equal to the minimum eigenvalue of the population covariance matrix. The theorem reveals the bias is controlled at the rate $p/(n\lambda_j)$.  Our result is also consistent with \cite{And63}'s result that
\[
\sqrt{n}\Big(\hat \lambda_j - \lambda_j \Big)  \overset{d} \Rightarrow N(0,2\lambda_j^2)\,,
\]
for Gaussian distributions and fixed $p$ and $\lambda_j$'s, where the non-spiked part does not exist and thus the bias $O_P(\lambda_j^{-1} \sqrt{p/n})$ disappears. The proof is relegated to Section \ref{sec6}.

\subsection{Behavior of empirical eigenvectors} \label{sec3.2}
Let us consider the asymptotic distribution of the empirical eigenvectors $\hat \bxi_j$'s corresponding to $\hat\lambda_j$, $j = 1,2,\dots, m$. As in \cite{Pau07}, each $\hat\bxi_j$ is divided into two parts corresponding to the spike and non-spike components, i.e. $ \hat\bxi_j = ( \hat\bxi_{jA}',  \hat\bxi_{jB}')'$ where $ \hat\bxi_{jA}$ is of length $m$.

\begin{thm}
\label{thm3.2}
Under Assumptions \ref{assump1} - \ref{assump3}, we have \\
(i) For the spike part, if $m = 1$,
\begin{equation} \label{eeq3.2}
\frac{2(1+\bar c c_1)}{\bar c c_1} \sqrt{n} \Big(\sqrt{1+\bar c c_1}\; \hat\xi_{1A} - 1 + O_P\Big(\sqrt{\frac{p}{n\lambda_1^2}}\Big) \Big) \overset{d} \Rightarrow N(0, \kappa_1 - 1)\,,
\end{equation}
while if $m > 1$,
\begin{equation} \label{eeq3.3}
    \sqrt{n} \Big( \frac{\hat\bxi_{jA}}{\|\hat\bxi_{jA}\|} - \be_{jA} + O_P\Big(\sqrt{\frac{p}{n\lambda_j^2}}\Big)\Big)  \overset{d} \Rightarrow N_m(\bf 0,\bSigma_j)\,,
\end{equation}
for $j=1,2,\dots,m$, with
\[
\bSigma_j = \sum\nolimits_{ k \in [m] \setminus j} a_{jk}^2 \be_{kA} \be_{kA}'\,,
\]
where $[m]=\{1, \cdots, m\}$, $\be_{kA}$ is the first $m$ elements of unit vector $\be_k$, and $a_{jk} = \lim_{\lambda_j, \lambda_k \to \infty} \sqrt{\lambda_j \lambda_k}/(\lambda_j - \lambda_k)$, which is assumed to exist. \\
(ii) For the noise part, if we further assume the data is Gaussian, there exists $p-m$ dimensional vector $\bh_0$ such that
\begin{equation}  \label{eeq3.5}
\Big\| \bD_0 \frac{\hat\bxi_{jB}}{\|\hat\bxi_{jB}\|} - \bh_0\Big\| = O_P\Big(\sqrt{\frac{n}{p}}\Big) + o_P\Big(\frac{1}{\sqrt{n}}\Big) \;\; \text{and} \;\; \bh_0 \sim \unif\Big(B_{p-m}(1)\Big)\,,
\end{equation}
where $\bD_0 = \diag(\sqrt{\bar c/ \lambda_{m+1}}, \dots, \sqrt{\bar c/\lambda_p})$ is a diagonal scaling matrix and $\unif(B_k( r))$ denotes the uniform distribution over the centered sphere of radius $r$.  In addition, the max norm of $\hat \bxi_{jB}$ satisfies
\begin{equation}  \label{eeq3.6}
\|\hat \bxi_{jB}\|_{\max} = O_P\Big(p/(n \lambda_j^{3/2}) + \sqrt{\log p/(n\lambda_j)} \Big) \,.
\end{equation} \\
(iii) Furthermore,
$\|\hat\bxi_{jA}\| = (1+ \bar c c_j)^{-1/2} + O_P(\lambda_j^{-1}\sqrt{p/n} + p/(n^{3/2}\lambda_j) )$ and \\
$\|\hat\bxi_{jB}\| = (\frac{\bar c c_j}{1+ \bar c c_j})^{1/2} + O_P(\sqrt{1/\lambda_j} + \sqrt{p/(n^{2}\lambda_j)} )$. Together with (i), this implies the inner product between empirical eigenvector and the population one converges to $(1+\bar c c_j)^{-1/2}$ in probability and
\begin{equation} \label{eeq3.4}
\langle \hat\bxi_j, \be_j \rangle  - \frac{1}{\sqrt{1+\bar c c_j}}  =
O_P\Big(\lambda_j^{-1} \sqrt{p/n} + p/(n^{3/2}\lambda_j)\Big) + O_P(n^{-1}) I(m > 1).
\end{equation}
\end{thm}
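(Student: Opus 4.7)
The strategy is to exploit the dimension-swap identity (\ref{VecRel}): $\hat\bxi_j = (n\hat\lambda_j)^{-1/2}\bX'\bu_j$, where $\bu_j$ is the $j$-th eigenvector of the $n\times n$ Gram matrix $\widetilde\bSigma$. Partition the columns of $\bX$ into the first $m$ (spike) and the last $p-m$ (noise), $\bX_A$ and $\bX_B$, and note that $\bX_A = \bZ_A\bLambda_A^{1/2}$ with $\bZ_A = (\bZ_1,\dots,\bZ_m)$ and $\bLambda_A = \diag(\lambda_1,\dots,\lambda_m)$. Decompose
\[
\widetilde\bSigma = \widetilde\bSigma_A + \widetilde\bSigma_B, \qquad \widetilde\bSigma_A = \tfrac{1}{n}\bZ_A\bLambda_A\bZ_A', \qquad \widetilde\bSigma_B = \tfrac{1}{n}\sum\nolimits_{k>m}\lambda_k\bZ_k\bZ_k'.
\]
Under Assumption \ref{assump2} and standard sub-Gaussian concentration, $\widetilde\bSigma_B = \tfrac{p\bar c}{n}\bI_n + \bE_B$ with $\|\bE_B\| = O_P(\sqrt p/n)$. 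The nonzero eigenstructure of $\widetilde\bSigma_A$ is carried by the $m\times m$ matrix $\bH = \bLambda_A^{1/2}\bW\bLambda_A^{1/2}$ with $\bW = \tfrac1n\bZ_A'\bZ_A$: if $(\mu_j,\bv_j)$ is an eigenpair of $\bH$ then $\bu_j^{(0)}:=(n\mu_j)^{-1/2}\bZ_A\bLambda_A^{1/2}\bv_j$ is the corresponding unit eigenvector of $\widetilde\bSigma_A$. A Davis--Kahan bound applied to $\widetilde\bSigma_A + \bE_B$ then gives $\|\bu_j-\bu_j^{(0)}\| = O_P(\|\bE_B\|/\lambda_j) = O_P(\sqrt p/(n\lambda_j))$.

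\textbf{Spike part (i) and norms in (iii).} Inserting $\bu_j\approx\bu_j^{(0)}$ into (\ref{VecRel}) and using $\bLambda_A^{1/2}\bZ_A'\bu_j^{(0)} = \sqrt{n\mu_j}\,\bv_j$ yields
\[
\hat\bxi_{jA} = \sqrt{\mu_j/\hat\lambda_j}\,\bv_j + O_P\bigl(\sqrt{p/(n\lambda_j^2)}\bigr).
\]
Theorem \ref{thm3.1} gives $\mu_j/\hat\lambda_j\to(1+\bar cc_j)^{-1}$, so $\|\hat\bxi_{jA}\|^2 \to (1+\bar cc_j)^{-1}$ and, since $\|\hat\bxi_j\|=1$, $\|\hat\bxi_{jB}\|^2 \to \bar cc_j/(1+\bar cc_j)$ with the rates stated in (iii). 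For the distribution of $\bv_j$, write $\bW = \bI_m + \tfrac{1}{\sqrt n}\bM$: the off-diagonal entries of $\bM$ satisfy a multivariate CLT and are asymptotically independent $N(0,1)$'s, and first-order symmetric perturbation of $\bH$ gives, for $k\ne j$, a coefficient $-\sqrt{\lambda_j\lambda_k}W_{kj}/(\lambda_k-\lambda_j) = a_{jk}M_{kj}/\sqrt n + o_P(n^{-1/2})$, while the $j$-th coefficient equals $1-O_P(n^{-1})$. Combining yields (\ref{eeq3.3}) with covariance $\sum_{k\ne j}a_{jk}^2\be_{kA}\be_{kA}'$. When $m=1$ there are no off-diagonal perturbations ($\bv_1\equiv 1$), so $\hat\xi_{1A} = \sqrt{\mu_1/\hat\lambda_1}$; the delta method applied to $\mu_1/\hat\lambda_1 = W_{11}/(W_{11}+\bar cc_1)$ together with $\sqrt n(W_{11}-1)\Rightarrow N(0,\kappa_1-1)$ delivers (\ref{eeq3.2}).

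\textbf{Noise part (ii) and inner product in (iii).} Writing $\hat\bxi_{jB} = (n\hat\lambda_j)^{-1/2}\bLambda_B^{1/2}\bZ_B'\bu_j$ and replacing $\bu_j$ by $\bu_j^{(0)}$, the key observation is that $\bu_j^{(0)}$ depends only on $\bZ_A$, which under Gaussianity is independent of $\bZ_B$. Since $\|\bu_j^{(0)}\|=1$, conditionally on $\bZ_A$ we have $\bZ_B'\bu_j^{(0)}\sim N(\bzero,\bI_{p-m})$, so
\[
\bD_0\hat\bxi_{jB} \approx \sqrt{\bar c/(n\hat\lambda_j)}\,\bg, \qquad \bg\sim N(\bzero,\bI_{p-m}).
\]
Normalising gives the uniform direction $\bh_0 = \bg/\|\bg\|$ on the unit sphere of $\mathbb R^{p-m}$; the residual error in (\ref{eeq3.5}) is obtained by comparing $\sqrt{\sum_k\lambda_kg_k^2}$ with $\sqrt{\bar c}\|\bg\|$ (contributing $O_P(\sqrt{n/p})$ through Assumption \ref{assump2}) and by carrying the $O_P(\sqrt p/(n\lambda_j))$ Davis--Kahan slippage. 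The entrywise bound (\ref{eeq3.6}) follows from the maximum of $p-m$ Gaussian coordinates, each of standard deviation $\sqrt{\lambda_k/(n\hat\lambda_j)} \lesssim (n\lambda_j)^{-1/2}$, plus the $p/(n\lambda_j^{3/2})$ contribution from $\bu_j-\bu_j^{(0)}$. Finally $\langle\hat\bxi_j,\be_j\rangle = (\hat\bxi_{jA})_j$; the norm rate for $\|\hat\bxi_{jA}\|$ combined with the quadratic correction $(\bv_j)_j = 1-O_P(n^{-1})$ (active only for $m>1$) yields (\ref{eeq3.4}).

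\textbf{Main obstacle.} The delicate step is disentangling the two competing error scales—the $O_P(n^{-1/2})$ CLT fluctuation from the $m\times m$ matrix $\bH$ and the $O_P(\sqrt p/(n\lambda_j))$ bias from swapping $\widetilde\bSigma$ for $\widetilde\bSigma_A$—so that each surfaces in its correct place. When $\lambda_j\gg\sqrt p$ the second scale is negligible and the CLT is clean, but in the borderline regime $\lambda_j\asymp\sqrt p$ both are of the same order and the noise bias must be retained, yielding the extra $O_P(\sqrt{p/(n\lambda_j^2)})$ inside (\ref{eeq3.2})--(\ref{eeq3.3}). Carefully tracking the Davis--Kahan slippage through all the normalisations, and noting that Gaussianity is invoked only for the conditional-normality step in (ii), is the central technical burden.
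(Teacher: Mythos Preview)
Your approach is broadly correct and is a genuinely different, more modular route than the paper's. Where the paper works directly with the eigenvector equation $\widetilde\bSigma\bu_j=\hat\lambda_j\bu_j$, left-multiplies by $\bLambda_A^{1/2}\bZ_A'/\sqrt{n\hat\lambda_j}$, and then introduces the resolvent-type matrix $\bR=\sum_{k\neq j}\frac{\lambda_j}{\lambda_j-\lambda_k}\be_{kA}\be_{kA}'$ to isolate the asymptotically normal piece from the remainder $\br_n$ (Lemmas~\ref{lem6.4}--\ref{lem6.5}), you instead compare $\bu_j$ to the eigenvector $\bu_j^{(0)}$ of $\widetilde\bSigma_A$ via Davis--Kahan and then invoke Anderson-style first-order perturbation on the $m\times m$ matrix $\bH$. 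For part~(ii), your conditional-Gaussianity argument ($\bZ_B'\bu_j^{(0)}\sim N(\bzero,\bI_{p-m})$ given $\bZ_A$) is arguably cleaner than the paper's rescaling trick $\bX^R=\bX\,\diag(\bI_m,\bD_0)$ combined with Paul's orthogonal-invariance result and a $\sin\theta$ comparison of $\bu_j$ with $\bu_j^R$. For part~(iii) the paper uses a separate matrix identity (Lemma~\ref{lem6.6}, sandwiching the $j$-th diagonal entry of $n^{-1}\bZ\bar\bLambda\bZ'$), whereas you read off $\|\hat\bxi_{jA}\|^2\approx\mu_j/\hat\lambda_j$ directly from your decomposition; both give the stated rate once you observe the partial cancellation between $\mu_j/\lambda_j-1$ and $\hat\lambda_j/\lambda_j-(1+\bar cc_j)$ (which leaves $O_P(c_jn^{-1/2})$ rather than $O_P(n^{-1/2})$).

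One point needs more care. Your blanket bound on the transfer error,
\[
\big\|(n\hat\lambda_j)^{-1/2}\bLambda_A^{1/2}\bZ_A'(\bu_j-\bu_j^{(0)})\big\|\le (n\hat\lambda_j)^{-1/2}\|\bLambda_A^{1/2}\bZ_A'\|\cdot\|\bu_j-\bu_j^{(0)}\|,
\]
picks up a factor $\sqrt{\lambda_1/\lambda_j}$, and Assumption~\ref{assump1} does \emph{not} force $\lambda_1/\lambda_j$ to be bounded (it only gives $\lambda_{j+1}\le(1-\delta_0)\lambda_j$, a one-sided control). The paper's resolvent $\bR(\bLambda_A/\lambda_j)^{1/2}$ has operator norm $\max_{k\neq j}\sqrt{\lambda_j\lambda_k}/|\lambda_j-\lambda_k|\le 1/\delta_0$ precisely because the well-separation turns potential blow-up in $\sqrt{\lambda_k/\lambda_j}$ into a bounded quantity. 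Your approach can be repaired by using the component-wise Davis--Kahan estimate $|\langle\bu_j,\bu_l^{(0)}\rangle|=O_P(\sqrt{p/n}/|\lambda_j-\lambda_l|)$ together with $\bLambda_A^{1/2}\bZ_A'\bu_l^{(0)}=\sqrt{n\mu_l}\,\bv_l$, which recovers the correct $O_P(\lambda_j^{-1}\sqrt{p/n})$ after invoking $|\lambda_j-\lambda_l|\ge\delta_0\max(\lambda_j,\lambda_l)$; but this refinement should be made explicit. (Also, your displayed rate $\|\bE_B\|=O_P(\sqrt p/n)$ should read $O_P(\sqrt{p/n})$; the downstream claims are consistent with the latter.)
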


In the above theory, we assume that $a_{jk} = \lim_{\lambda_j, \lambda_k \to \infty} \frac{\sqrt{\lambda_j \lambda_k}}{\lambda_j - \lambda_k}$ exists. This is not restrictive if eigenvalues are well separated i.e. $\min_{j \ne k \le m} |\lambda_j - \lambda_k|/\lambda_j \ge \delta_0$ from assumption \ref{assump1}.
The assumption obviously holds for the pervasive factor model \citep{FanLiaMin13}, in which
$a_{jk} = \sqrt{\lambda_j(\bSigma_b) \lambda_k(\bSigma_b)}/(\lambda_k(\bSigma_b) -  \lambda_j(\bSigma_b))$.

Theorem~\ref{thm3.2} is an extension of random matrix results into ultra high dimensional regime. Its proof sheds light on how to use the smaller $n \times n$ matrix $\widetilde \bSigma$ as a tool to understand the behavior of the larger $p \times p$ covariance matrix $\hat \bSigma$. Specifically, we start from $\widetilde \bSigma \bu_j = \hat \lambda_j \bu_j$ or identity (\ref{eqn4.3}) and then use the simple fact (\ref{VecRel}) to get a relationship (\ref{eqn4.4}) of eigenvector $\hat\bxi_j$. Then  (\ref{eqn4.4}) is rearranged as (\ref{eqn4.5}) which gives a clear separation of dominating term, that is asymptotically normal, and error term. This makes the whole proof much simpler in comparison with \cite{Pau07} who showed a similar type of results through a complicated representation of $\hat\bxi_j$ and $\hat \lambda_j$. From this simple trick, we can understand deeply how some important high and low dimensional quantities link together and differ from each other.

Several remarks are in order. Firstly, since  $\hat\bxi_j^{(\bY)} = \bGamma \hat\bxi_j$ is the $j^{th}$ empirical eigenvector based on observed data $\bY$, we have decomposition
\[
\hat\bxi_j^{(\bY)} = \bGamma_A \hat\bxi_{jA} + \bGamma_B\hat\bxi_{jB}\,,
\]
where $\bGamma = (\bGamma_A, \bGamma_B)$.  Note that $\bGamma_A \hat\bxi_{jA}$ converges to the true eigenvector deflated by a factor of $\sqrt{1+\bar c c_j}$ with the convergence rate $O_P(\sqrt{p/(n\lambda_j^2)} + p/(n^{3/2} \lambda_j) + n^{-1/2})$ while $\bGamma_B\hat\bxi_{jB}$ creates a random bias, which is distributed uniformly on an ellipse of $(p-m)$ dimension and projected into the $p$ dimensional space spaned by $\bGamma_B$. The two parts intertwined in such a way that correction for the bias of estimating eigenvectors is almost impossible. More details are discussed in Section \ref{sec4} for factor models. Secondly, it is clearly as in the eigenvalue case, the bias term $\lambda_j^{-1}\sqrt{p/n}$ in Theorem \ref{thm3.2} (i) disappears when $\sqrt{p} = o(\lambda_j)$.  In particular, for the stronger factor given by (\ref{eq2.1}), $\hat\bxi_j^{(\bY)}$ is a consistent estimator. Thirdly, the situations $m = 1$ and $m > 1$ have slight difference in that multiple spikes could interact with each other. Especially this reflects in the convergence of angle of empirical eigenvector to its population counterpart: the angle converges to $(1+\bar c c_j)^{-1/2}$ with an extra rate $O_P(1/n)$ which stems from estimating $\hat\xi_{jk}$ for $j \ne k \le m$ (see proof of Theorem \ref{thm3.2} (iii)). The difference will only be seen when the spike magnitude is higher than the order $\sqrt{pn} \vee pn^{-1/2}$. We will verify this by a simple simulation in Section \ref{sec5}. Fourthly, it is the first time that the max norm bound of the non-spiked part was derived. This bound will be useful for analyzing factor models in Section \ref{sec4}.

Theorem~\ref{thm3.2} again implies the results of \cite{SheSheZhuMar13}. It also generalizes the asymptotic distribution of non-spiked part from pure orthogonal invariant case of \cite{Pau07} to more general bounded setting.  In particular, when $p/n \to \infty$, the asymptotic distribution of the normalized non-spiked component is not uniform over a sphere any more, but over an ellipse. In addition, our result can be compared with the low dimensional case, where \cite{And63} showed that
\beq
    \sqrt{n} \Big( \hat\bxi_{j} - \be_{j} \Big) \overset{d} \Rightarrow N_p\Big({\bf 0},\sum\limits_{ k \in [m] \setminus j } \frac{\lambda_j \lambda_k}{(\lambda_j-\lambda_k)^2} \be_{k} \be_{k}'\Big)\,,
\eeq
for fixed $p$ and $\lambda_j$'s. Under our assumptions, if the spiked eigenvalues go to infinity, the constants in the asymptotic covariance matrix are replaced by the limits $a_{jk}$'s. Similar to the behavior of eigenvalues, the spiked part $\hat\bxi_{jA}$ preserves the normality property except for a bias factor $1/(1+\bar c c_j)$ caused by the high dimensionality.
 
Recent manuscript by \cite{KolLou14b} provides general asymptotic results for the empirical eigenvectors from a spectral projector point of view, but they mainly focus on the regime of $p/n\lambda_j \to 0$.  Indeed, they limit themselves to the regime that $p=o(n)$ and $\lambda_1 = O(1)$ when establishing the asymptotic normality (see conditions for Theorems 5 and 7 therein).  In contrast, we consider a very different regime, requiring $p > n$ and allowing $\lambda_1$ to diverge.  Furthermore, Theorem~\ref{thm3.2}
gives a more refined description on the behavior of empirical eigenvectors than the asymptotic normality result given in Theorem 7 of \cite{KolLou14b}.
Last but not least, it has been shown by \cite{JohLu09} that PCA generates consistent eigenvector estimation if and only if $p/n \to 0$ when the spike sizes are fixed. This motivates the study of sparse PCA. We take the spike magnitude of eigenvalues into account and provide additional insights by showing that PCA consistently estimate eigenvalues and eigenvectors if and only if $p/(n\lambda_j) \to 0$. This explains why \cite{FanLiaMin13} can consistently estimate the eigenvalues and eigenvectors while \cite{JohLu09} can not.

\section{Applications to factor models} \label{sec4}
In this section, we propose a method named Shrinkage Principal Orthogonal complEment Thresholding (S-POET) for estimating large covariance matrices induced by the approximate factor models. The estimator is based on correction of the bias of empirical eigenvalues as specified in (\ref{eq3.1}). We derive for the first time the bound of the relative estimation errors of covariance matrices under the spectral norm. The results are then applied to assessing large portfolio risk and estimation of false discovery proportion, where the conditions in existing literature are relaxed.

\subsection{Approximate factor models}
Factor models have been widely used in various disciplines such as finance and genomics.  Consider the approximate factor model
\beq \label{eeq4.1}
y_{it} = \bb_i' \bff_t + u_{it} \,,
\eeq
where $y_{it}$ is the observed data for the $i^{th}$ ($i=1,\dots,p$) individual (e.g. returns of stocks) or components (e.g. expression of genes) at time $t=1,\dots,T$; $\bff_t$ is a $m \times 1$ vector of latent common factors and $\bb_i$ is the factor loadings for the $i^{th}$ individuals or components; $u_{it}$ is the idiosyncratic error, uncorrelated with the common factors. In genomics application, $t$ can also index individuals or repeated experiments.   For simplicity we assume there is no time dependency.

The factor model can be written into a matrix form as follows:
\beq
\bY = \bB \bF' +\bU\,,
\eeq
where $\bY_{p\times T}$, $\bB_{p\times m}$, $\bF_{T \times m}$, $\bU_{p\times T}$ are respectively the matrix form of observed data, factor loading matrix, factor matrix, and error matrix.  For identifiability issue, we impose the condition that $\cov(\bff_t) = \bI$ and $\bB'\bB$ is a diagonal matrix. Thus, the covariance matrix is given by
\beq
\bSigma = \bB\bB' + \bSigma_u\,,
\eeq
where $\bSigma_u$ is the covariance matrix of the idiosyncratic error at any time $t$.

Under the assumption that $\bSigma_u = (\sigma_{u,ij})_{i,j \le p}$ is sparse with its eigenvalues bounded away from zero and infinity, the population covariance exhibit a ``low-rank plus sparse'' structure. The sparsity is measured by the following quantity
\[
m_p = \max_{i \le p} \sum_{j \le p} |\sigma_{u,ij}|^q,
\]
for some $q \in [0,1]$ \citep{BicLev08}. In particular, $m_p$ with $q=0$ is the maximum number of nonzero elements in each row of $\bSigma_u$.

In order to estimate the true covariance matrix with the above factor structure, \cite{FanLiaMin13} proposed a method called ``POET" to recover the unknown factor matrix as well as the factor loadings. The idea is simply to first decompose the sample covariance matrix into the spiked part and non-spiked part and estimate them separately.  Specifically, let $\hat\bSigma = T^{-1} \bY \bY'$ and $\{\hat\lambda_j\}$ and $\{\hat \bxi_j\}$ be its corresponding eigenvalues and eigenvectors.  They define
\beq
\label{Eqn:decomp}
\hat\bSigma^{\top} = \sum_{j = 1}^{m} \hat\lambda_j \hat\bxi_{j} \hat\bxi_j' + \hat\bSigma_u^{\top} \,,
\eeq
where $\hat\bSigma_u^{\top}$ is the matrix after applying thresholding method
\citep{BicLev08} to $\hat \bSigma_u = \hat\bSigma - \sum_{j=1}^m \hat\lambda_j \hat\bxi_{j} \hat\bxi_j'$.

They showed that the above estimation procedure is equivalent to the least square approach that minimizes
\beq \label{minimization}
(\hat\bB, \hat\bF) = \arg \min_{\bB,\bF} \|\bY - \bB\bF'\|_F^2 \text{ s.t. } \frac{1}{T} \bF'\bF = \bI_m, \bB'\bB \text{ is diagonal}.
\eeq
The columns of $\hat\bF/\sqrt{T}$ are the eigenvectors corresponding to the $m$ largest eigenvalues of the $T\times T$ matrix $T^{-1} \bY'\bY$ and $\hat\bB = T^{-1} \bY \hat\bF$. 
After $\bB$ and $\bF$ are estimated, the sample covariance of $\hat \bU = \bY - \hat\bB \hat\bF'$ can be formed: $\hat\bSigma_u = T^{-1} \hat\bU \hat\bU'$. Finally thresholding is applied to $\hat\bSigma_u$ to generate $\hat\bSigma_u^{\top} = (\hat\sigma_{u,ij}^{\top})_{p\times p}$, where
\begin{equation} \label{thresholding}
\hat\sigma_{u,ij}^{\top} = \left\{  \begin{array}{lr} \hat\sigma_{u,ij}, & i = j; \\ s_{ij} (\hat\sigma_{u,ij}) I(|\hat\sigma_{u,ij}| \ge \tau_{ij}),  & i \ne j. \end{array} \right.
\end{equation}
Here $s_{ij}(\cdot)$ is the generalized shrinkage function \citep{AntFan01,RotLevZhu09} and $\tau_{ij} = \tau (\hat\sigma_{u,ii} \hat\sigma_{u,jj})^{1/2}$ is the entry-dependent threshold. The above adaptive threshold corresponds to applying thresholding with parameter $\tau$ to the correlation matrix of $\hat\bSigma_u$. The positive parameter $\tau$ will be determined later.


\cite{FanLiaMin13} showed that under Assumptions \ref{assPoet1} - \ref{assPoet4} listed in Appendix \ref{secA} in the supplementary material \citep{EigenStructure_supp},
\beq \label{Eqn:oldRate}
\|\hat\bSigma^{\top} - \bSigma\|_{\Sigma, F} = O_P\Big(\frac{\sqrt{p}\log p}{T} + m_p\Big(\frac{\log p}{T} + \frac{1}{p}\Big)^{(1-q)/2}\Big)\,,
\eeq
where $\|\bA\|_{\Sigma, F} = p^{-1/2} \|\bSigma^{-1/2}\bA \bSigma^{-1/2}\|_F$ and $\|\cdot\|_F$ is the Frobenius norm. Note that
$$
    \|\hat\bSigma^{\top} - \bSigma\|_{\Sigma, F} = p^{-1/2}
    \|\bSigma^{-1/2} \hat\bSigma^{\top} \bSigma^{-1/2} - \bI_p \|_F,
$$
which measures the relative error in Frobenius norm. A more natural metric is relative error under the operator norm $\|\bA\|_{\Sigma} = p^{-1/2} \|\bSigma^{-1/2}\bA \bSigma^{-1/2}\|$, which can not be obtained by using the
technical device of \cite{FanLiaMin13}.  Via our new tools, we will establish such a result under weaker conditions than their pervasiveness assumption. Note that the relative error convergence is particularly meaningful for spiked covariance matrix, as eigenvalues are in different scales.

\subsection{Shrinkage POET under relative spectral norm} \label{sec4.2}

The discussion above reveals several drawbacks of POET. First, the spike size has to be of order $p$ which rules out relatively weaker factors.  Second, it is well known that the empirical eigenvalues are inconsistent if the spike eigenvalues do not significantly dominate the non-spike part. Therefore, proper correction or shrinkage is needed. See a recent paper by \cite{DonGavJoh14} for optimal shrinkage of eigenvalues.

Regarding to the first drawback, we relax the assumption $\|p^{-1}\bB' \bB - \bOmega_0\| = o(1)$ in Assumption \ref{assPoet1} to the following weaker assumption.

\begin{assum} \label{assump1:factor}
$\|\bLambda_A^{-1/2} \bB' \bB \bLambda_A^{-1/2} - \bOmega_0\| = o(1)$ for some $\bOmega_0$ with eigenvalues bounded from above and below, where $\bLambda_A = \diag(\lambda_1, \dots, \lambda_m)$. In addition, we assume $\lambda_m \to \infty$, $\lambda_1/\lambda_m$ is bounded from above and below.
\end{assum}

This assumption does not require the first $m$ eigenvalues of $\bSigma$ to take on any specific rate.  They can still be much smaller than $p$, although for simplicity we require them to diverge and share the same diverging rate. As we assume bounded $\| \bSigma_u\|$, the assumption
$\lambda_m \to \infty$ is also imposed to avoid the issue of identifiability. When $\lambda_m$ does not diverge, more sophisticated condition is needed for identifiability \citep{Cha11}.

In order to handle the second drawback, we propose the Shrinkage POET (S-POET) method. Inspired by (\ref{eq3.1}), the shrinkage POET modifies the first part in POET estimator (\ref{Eqn:decomp}) as follows:
\beq \label{Eqn:SPOET}
\hat\bSigma^{S} = \sum_{j = 1}^{m} \hat\lambda_j^S \hat\bxi_{j} \hat\bxi_j' + \hat\bSigma_u^{\top} \,,
\eeq
where $\hat\lambda_j^S = \max\{\hat\lambda_j - \bar c p/n, 0\}$, a simple soft thresholding correction. Obviously if $\hat\lambda_j$ is sufficiently large, $\hat\lambda_j^S/\lambda_j = \hat\lambda_j/\lambda_j - \bar c c_j = 1 + o_P(1)$. Since $\bar c$ is unknown, a natural estimator $\hat c$ is such that the total of the eigenvalues remains unchanged:
\[
\tr(\hat\bSigma) = \sum_{j=1}^m (\hat\lambda_j - \hat c p/n) + (p-m) \hat c\,
\]
or $\hat c = (\tr(\hat\bSigma) - \sum_{j=1}^m \hat\lambda_j)/(p-m-pm/n)$. It has been shown by Lemma 7 of \cite{YatAos12} that
\[
(\hat c - \bar c) \frac{p}{n\lambda_j} = O_P\Big(\frac{\tr(\hat\bSigma) - \sum_{j=1}^m \hat\lambda_j}{(n-m)\lambda_m} - \frac{\bar c p}{n\lambda_m} \Big) = O_P(n^{-1})\,.
\]
Thus, replacing $\bar c$ by $\hat c$, we have $\hat\lambda_j^S/\lambda_j - 1 = O_P(\lambda_j^{-1}\sqrt{p/n} + n^{-1/2})$, i.e. the estimation error in $\hat{c}$ is negligible. From Theorem \ref{thm3.1}, we can easily obtain asymptotic normality, that is $\sqrt{n}(\hat\lambda_j^S/\lambda_j - 1) \overset{d} \Rightarrow N(0,\kappa_j-1)$ if $\sqrt{p} = o(\lambda_j)$.

To get the convergence of relative errors under the operator norm, we also need the following additional assumptions:

\begin{assum}  \label{assump2:factor}
(i) $\{\bu_t, \bff_t\}_{t \ge 1}$ are independently and identically distributed with $\mathbb E[u_{it}] = \mathbb E[u_{it} f_{jt}] = 0$ for all $i \le p, j \le m$ and $t \le T$. \\
(ii) There exist positive constants $c_1$ and $c_2$ such that $\lambda_{\min}(\bSigma_u) > c_1$, $\|\bSigma_u\|_{\infty} < c_2$, and $\min_{i,j} \Var(u_{it} u_{jt}) > c_1$. \\
(iii) There exist positive constants $r_1, r_2, b_1$ and $b_2$ such that for $s>0, i\le p, j \le m$,
$$
\mathbb P(|u_{it}| > s) \le exp(-(s/b_1)^{r_1}) \; \text{ and } \; \mathbb P(|f_{jt}| > s) \le exp(-(s/b_2)^{r_2})\,.
$$
(iv) There exists $M > 0$ such that for all $i \le p, j \le m$, $|b_{ij}| \le M\sqrt{\lambda_j/p}$. \\
(v) $\sqrt{p} (\log T)^{1/r_2} = o(\lambda_m)$.
\end{assum}


The first three conditions are common in factor model literature. If we write $\bB = (\widetilde\bb_1, \dots, \widetilde\bb_m)$, by Weyl's inequality we have $\max_{1\le j\le m} \|\widetilde\bb_j\|^2/\lambda_j \le 1+\|\bSigma_u\|/\lambda_j = 1+o(1)$. Thus it is reasonable to assume the magnitude $|b_{ij}|$ of factor loadings is of order $\sqrt{\lambda_j/p}$ in the fourth condition.
The last condition is imposed to ease technical presentation.

Now we are ready to investigate $\|\hat\bSigma^S - \bSigma\|_{\Sigma}$. Suppose the SVD decomposition of $\bSigma$,
$$
\bSigma = \left( \begin{array}{cc}
\bGamma_{p \times m} & \bOmega_{p \times (p-m)} \end{array} \right) \left( \begin{array}{cc}
\bLambda_{m \times m} & \\ & \bTheta_{(p-m) \times (p-m)} \end{array} \right)   \left( \begin{array}{c} \bGamma' \\ \bOmega' \end{array} \right)\,.
$$
Then obviously
\beqn \label{Norm_BreakDown1}
\|\hat\bSigma^S - \bSigma\|_{\Sigma} & \le & \left\| \bSigma^{-\frac12} (\hat\bGamma \hat\bLambda^S\hat\bGamma' - \bB \bB') \bSigma^{-\frac12} \right\| + \|\bSigma^{-\frac12} (\hat\bSigma_u^{\top} - \bSigma_u) \bSigma^{-\frac12}\| \nonumber \\
 & =: & \Delta_L + \Delta_S,
\eeqn
and
\beq \label{Norm_BreakDown3}
\Delta_S \le \|\bSigma^{-1}\| \|\hat\bSigma_u^{\top} - \bSigma_u\| \le C \|\hat\bSigma_u^{\top} - \bSigma_u\| \,.
\eeq
It can be shown
\beq \label{Norm_BreakDown2}
\begin{aligned}
\Delta_L &=  \left\| \left( \begin{array}{c} \bLambda^{-\frac12} \bGamma' \\ \bTheta^{-\frac12} \bOmega' \end{array} \right) (\hat\bGamma \hat\bLambda^S \hat\bGamma' - \bB \bB') \left( \begin{array}{cc}
\bGamma \bLambda^{-\frac12} & \bOmega \bTheta^{-\frac12} \end{array} \right) \right\|  \\
& \le \Delta_{L1} + \Delta_{L2} \,,
\end{aligned}
\eeq
where $\Delta_{L1} =  \|\bLambda^{-\frac12} \bGamma' (\hat\bGamma\hat\bLambda^S \hat\bGamma' - \bB \bB') \bGamma \bLambda^{-\frac12}\|$
and $\Delta_{L2} = \|\bTheta^{-\frac12} \bOmega' (\hat\bGamma \hat\bLambda^S \hat\bGamma' - \bB \bB') \bOmega \bTheta^{-\frac12} \|$.
Thus in order to find the convergence rate of relative spectral norm, we need to consider the terms $\Delta_{L1}, \Delta_{L2}$ and $\Delta_S$ separately. Notice that $\Delta_{L1}$ measures the relative error of the estimated spiked eigenvalues, $\Delta_{L2}$ reflects the goodness of the estimated eigenvectors, and $\Delta_S$ controls the error of estimating the sparse idiosyncratic covariance matrix. The following theorem reveals the rate of each term. Its proof will be provided in Appendix \ref{secB} of the supplementary material \citep{EigenStructure_supp}.

\begin{thm}  \label{thm_RelSpectral}
Under Assumptions \ref{assump1}, \ref{assump2}, \ref{assump3}, \ref{assump1:factor} and \ref{assump2:factor}, if $p\log p > \max\{ T(\log T)^{4/r_2}, T(\log (pT))^{2/r_1} \}$, we have
$$
\Delta_{L1} = O_P\Big(T^{-1/2}\Big)\,, \;\; \Delta_{L2} = O_P\Big(\frac{p}{T} + \frac{1}{\lambda_m} \Big)\,,
$$
and by applying adaptive thresholding estimator (\ref{thresholding})
with
$$
\tau_{ij} = C \omega_T (\hat\sigma_{u,ii} \hat\sigma_{u,jj})^{1/2}, \quad
\mbox{and} \quad \omega_T = \sqrt{\log p/T} + \sqrt{1/p},
$$
we have
$$
\Delta_S = O_P\Big(m_p \omega_T^{1-q} \Big)\,.
$$
Combining the three terms, $\|\hat\bSigma^S - \bSigma\|_{\Sigma} = O_P(T^{-1/2} + \frac{p}{T} + \frac{1}{\lambda_m} +m_p \omega_T^{1-q} )$.
\end{thm}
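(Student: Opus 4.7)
The plan is to separately control the three pieces $\Delta_{L1}$, $\Delta_{L2}$, $\Delta_S$ from (\ref{Norm_BreakDown1})--(\ref{Norm_BreakDown2}), invoking Theorems \ref{thm3.1}--\ref{thm3.2} for the low-rank part and Bickel--Levina style thresholding for the sparse part. The algebraic backbone is the pair of decompositions $\bSigma = \bGamma\bLambda\bGamma' + \bOmega\bTheta\bOmega' = \bB\bB' + \bSigma_u$, which yield $\bGamma'\bB\bB'\bGamma = \bLambda - \bGamma'\bSigma_u\bGamma$ and $\bOmega'\bB\bB'\bOmega = \bTheta - \bOmega'\bSigma_u\bOmega$. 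Since $\|\bSigma_u\|=O(1)$, these ``residual'' $\bSigma_u$ pieces contribute only $O(1/\lambda_m)$ after the relative-error rescaling by $\bLambda^{-1/2}$ or $\bTheta^{-1/2}$, and can be separated off at the start.

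For $\Delta_{L1}$, set $\bV = \bGamma'\hat\bGamma$; the task reduces to bounding $\|\bLambda^{-1/2}\bV\hat\bLambda^S\bV'\bLambda^{-1/2} - \bI\|$ plus the $O(1/\lambda_m)$ residual. Theorem \ref{thm3.2}(iii) gives $V_{jj}^2 = (1+\bar c c_j)^{-1} + O_P(\lambda_j^{-1}\sqrt{p/T})$; Theorem \ref{thm3.2}(i) yields $V_{jk} = O_P(T^{-1/2})$ for $j \ne k$; and Theorem \ref{thm3.1}, combined with the consistency $\hat c - \bar c = O_P((T\lambda_m/p)^{-1})$ from the discussion around (\ref{Eqn:SPOET}), delivers $\hat\lambda_j^S/\lambda_j = 1+O_P(T^{-1/2})$. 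Under Assumption \ref{assump2:factor}(v) the residual bias $\bar c c_j = O(p/(T\lambda_m))$ is $o(T^{-1/2})$, so expanding $1/(1+\bar c c_j) = 1 + O(\bar c c_j)$ shows that the diagonal of $\bLambda^{-1/2}\bV\hat\bLambda^S\bV'\bLambda^{-1/2}$ equals $1+O_P(T^{-1/2})$ and the off-diagonal is of the same order. For $\Delta_{L2}$, the analogous calculation uses $\bW = \bOmega'\hat\bGamma$, whose columns are precisely the non-spike components $\hat\bxi_{jB}$ of Theorem \ref{thm3.2}(ii); the uniform-on-ellipse description (\ref{eeq3.5}) combined with $\|\hat\bxi_{jB}\|^2 = O_P(\bar c c_j) = O_P(p/(T\lambda_j))$ and $\hat\bLambda^S \asymp \bLambda_A$ yields $\|\bTheta^{-1/2}\bW\hat\bLambda^S\bW'\bTheta^{-1/2}\| = O_P(p/T)$, so combining with the $O(1/\lambda_m)$ residual gives $\Delta_{L2} = O_P(p/T + 1/\lambda_m)$.

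For $\Delta_S$, since $\|\bSigma^{-1}\|$ is bounded, (\ref{Norm_BreakDown3}) reduces the task to bounding $\|\hat\bSigma_u^{\top} - \bSigma_u\|$. The key preliminary step is the entrywise bound $\|\hat\bSigma_u - \bSigma_u\|_{\max} = O_P(\omega_T)$: the $\sqrt{\log p/T}$ summand comes from Bernstein-type concentration on the sample cross-product entries under the sub-Gaussian tails of Assumption \ref{assump2:factor}(iii), while the $1/\sqrt p$ summand arises from the eigenvector bias, controlled through the max-norm estimate (\ref{eeq3.6}) for $\|\hat\bxi_{jB}\|_{\max}$. Given the entrywise bound, a standard adaptive-thresholding argument \cite{BicLev08,RotLevZhu09} with $\tau_{ij} = C\omega_T(\hat\sigma_{u,ii}\hat\sigma_{u,jj})^{1/2}$ delivers $\|\hat\bSigma_u^{\top} - \bSigma_u\| = O_P(m_p\omega_T^{1-q})$; summing the three bounds yields the claim. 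I expect the main obstacle to be exactly the $\|\cdot\|_{\max}$ bound on $\hat\bSigma_u - \bSigma_u$, which requires translating the vector-level geometry of $\hat\bxi_j$ in Theorem \ref{thm3.2}, and particularly the elliptical-uniform behavior of its non-spike component, into coordinate-wise control; this is exactly where the sharp bound (\ref{eeq3.6}) is indispensable and where the new regime $p/(T\lambda_j)$ enters through the $1/\sqrt p$ contribution to $\omega_T$.
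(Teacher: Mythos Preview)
Your treatment of $\Delta_{L2}$ is essentially the paper's: both reduce to $\|\bOmega'\hat\bGamma\|^2\,\|\hat\bLambda^S\|=O_P(p/T)$ via Lemma~\ref{lem6.6} (equivalently Theorem~\ref{thm3.2}(iii)), together with the $O(1/\lambda_m)$ residual from $\bB\bB'$ versus $\bGamma\bLambda\bGamma'$. For $\Delta_{L1}$ and $\Delta_S$ there are real differences.

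\medskip
\textbf{A gap in your $\Delta_{L1}$ argument.} The assertion that Assumption~\ref{assump2:factor}(v) forces $\bar c c_j=o(T^{-1/2})$ is false. That assumption only gives $\sqrt p(\log T)^{1/r_2}=o(\lambda_m)$, hence $\lambda_j^{-1}\sqrt{p/T}=o(T^{-1/2})$; but $c_j=p/(T\lambda_j)$ is merely bounded (Assumption~\ref{assump2}), and since $p>T$ one can easily have $c_j\gg T^{-1/2}$ (e.g.\ $\lambda_m\asymp p^{0.6}$, $T\asymp p^{0.5}$, giving $c_m\asymp p^{-0.1}$ versus $T^{-1/2}\asymp p^{-0.25}$). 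Consequently the diagonal of $\bLambda^{-1/2}\bV\hat\bLambda^S\bV'\bLambda^{-1/2}$ is $(1+\bar c c_j)^{-1}+O_P(T^{-1/2})$, not $1+O_P(T^{-1/2})$, and your expansion $1/(1+\bar c c_j)=1+O(\bar c c_j)$ does not close the gap. The paper handles this by centering $\hat\bGamma'\bGamma$ not at $\bI_m$ but at the biased diagonal $\bD=\diag\!\big((1+\bar c c_j)^{-1/2}\big)$: it records $\|\hat\bGamma'\bGamma-\bD\|=O_P(\lambda_m^{-1}\sqrt{p/T}+T^{-1/2})$ and $\|\bLambda^{-1/2}(\hat\bLambda^S-\bLambda)\bLambda^{-1/2}\|=O_P(\lambda_m^{-1}\sqrt{p/T}+T^{-1/2})$ as direct consequences of Section~\ref{sec3}, expands $(\bD+E)\hat\bLambda^S(\bD+E)'-\bD\bLambda\bD$, and then passes from $\bD\bLambda\bD$ to $\bLambda$ using $\bD^2\preceq\bI$.

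\medskip
\textbf{A genuinely different route for $\Delta_S$.} The paper does \emph{not} derive $\|\hat\bSigma_u-\bSigma_u\|_{\max}=O_P(\omega_T)$ through the eigenvector max-norm bound~\eqref{eeq3.6}. Instead it works at the level of factor residuals via the least-squares characterization~\eqref{minimization}: it controls $\|\hat\bF-\bF\bH'\|_F$, $\|\hat\bF-\bF\bH'\|_{\max}$ and $\|\hat\bB-\bB\bH'\|_{\max}$ (splitting into the two regimes $\lambda_m\gtrsim p$ and $\sqrt p\lesssim\lambda_m\lesssim p$, using $\bH^{-1}$ in the first and $\bH'$ in the second), deduces $\max_{i}T^{-1}\sum_t|\hat u_{it}-u_{it}|^2=O_P(\omega_T^2)$, and then invokes the generic thresholding lemma of \cite{FanLiaMin13}. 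Your eigenvector-based route is a legitimate alternative in spirit, but note that \eqref{eeq3.6} is proved in Theorem~\ref{thm3.2}(ii) only under the additional Gaussian assumption, whereas Theorem~\ref{thm_RelSpectral} does not assume Gaussianity; the paper's residual-based argument avoids this extra hypothesis.
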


The relative error convergence in spectral norm characterizes the accuracy of estimation for spiked covariance matrix.  In contrast with the previous results on Frobenius or max norm, this is the first time that the relative rate under spectral norm is derived. When $\lambda_m \asymp p$ and $q = 0$, we have
$$
\|\hat\bSigma^S - \bSigma\|_{\Sigma} = O_P\Big( \frac{p}{T} + m_p \sqrt{\frac{\log p}{T}} + \sqrt{\frac 1 p}\Big)\,.
$$
Comparing the rate with (\ref{Eqn:oldRate}), we see the difference under two different norms. The term $\sqrt{p}\log p/T$ in (\ref{Eqn:oldRate}) is enlarged to rate $p/T$, which is due to the incoherence of the eigen-spaces of the low-rank signal matrix and sparse error matrix. Specifically this rate comes from $\Delta_{L2}$. If we care only the relative error of the low-rank and sparse matrix spaces separately, we should only emphasize on $\Delta_{L1}$ and $\Delta_S$.

If $c_j = o(1)$, the proposed $\hat\lambda_j^S$ is asymptotically just the spiked empirical eigenvalue $\hat\lambda_j$.  However, when we have semi-weak factors whose corresponding eigenvalues are as weak as $p/T$, shrinkage is necessary to guarantee the convergence of $\Delta_{L1}$. On the other hand, if instead POET is applied to estimate covariance matrix, $\Delta_{L1} = O_P(p/(\lambda_m T) +T^{-1/2})$ which is only bounded. However since the empirical eigenvectors are not corrected, POET and S-POET attain the same rate for $\Delta_{L2}$, which actually dominates $\Delta_{L1}$ and $\Delta_S$ in high dimensional setting. Nevertheless, as to be seen in the simulation studies,
S-POET can stabilize the estimator and improve the estimation accuracy. For this reason, we recommend S-POET in practice.


\subsection{Portfolio risk management}
Portfolio allocation and risk management have been a fundamental problem in finance since \cite{Mar52}'s groundbreaking work on minimizing the volatility of portfolios with a given expected return. Specifically, the risk of a given portfolio with allocation vector $\bw$ is conventionally measured by its variance $\bw'\bSigma\bw$, where $\bSigma$ is the volatility (covariance) matrix of the returns of underlying assets. To estimate large portfolio's risks, it needs to estimate a large covariance matrix $\bSigma$ and factor models are frequently used to reduce the dimensionality. This was the idea of \cite{FanLiaShi15} in which they used POET estimator to estimate $\bSigma$. However, the basic method for bounding the risk error $|\bw'\hat\bSigma\bw-\bw'\bSigma\bw|$ in their paper as well as another earlier paper of similar topic \citep{FanZhaYu12} was
$$
|\bw'\hat\bSigma\bw-\bw'\bSigma\bw| \le \|\bw\|_1^2 \|\hat\bSigma - \bSigma\|_{\max}\,.
$$
They assumed that the gross exposure of the portfolio is bounded, mathematically $\|\bw\|_1 = O(1)$, which made it possible to only focus on the max error norm.
Technically, when $p$ is large, $\bw'\bSigma\bw$ can be small.
What an investor cares mostly is the relative risk error $\RE(\bw) = |\bw'\hat\bSigma\bw/\bw'\bSigma\bw - 1|$. Often $\bw$ is a data-driven investment strategy, which is a random variable itself.  Regardless of what $\bw$ is,
$$
\max_{\bw} \RE(\bw)   = \|\hat\bSigma - \bSigma\|_{\bSigma},
$$
which does not converge by Theorem \ref{thm_RelSpectral}.  The question is what kind of portfolio $\bw$ will make the relative error converge. Decompose $\bw$ as a linear combination of the eigenvectors of $\bSigma$, namely $\bw = (\bGamma, \bOmega) \bfeta$ and $\bfeta = (\bfeta_A', \bfeta_B')'$. We have the following useful result for risk management.

\begin{thm}  \label{thm_Risk}
Under Assumptions \ref{assump1}, \ref{assump2}, \ref{assump1:factor},\ref{assump2:factor} and the factor model (\ref{eeq4.1}) with Gaussian noises and factors, if there exists $C_1 >0$ such that $\|\bfeta_B\|_1 \le C_1$, and assume $\lambda_j \propto p^{\alpha}$ for $j = 1,\dots,m$ and $T \ge Cp^{\beta}$ for $\alpha >  1/2, 0 < \beta < 1, \alpha + \beta > 1$, then the relative risk error is of order
$$
\RE(\bw) = \Big|\frac{\bw'\hat\bSigma^S\bw}{\bw'\bSigma\bw}-1\Big| = O_P\Big( T^{-\min \{\frac{2(\alpha+\beta-1)}{\beta}, \frac12\}} + m_p w_T^{1-q}\Big)\,,
$$
for $\alpha < 1$. If $\alpha \ge 1$ or $\|\bfeta_A\| \ge C_2$, $\RE(\bw) =  O_P( T^{-1/2} + m_p w_T^{1-q})$.
\end{thm}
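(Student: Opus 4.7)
The plan is to decompose $\bw$ in the eigen-basis of $\bSigma$ by writing $\bw = \bGamma\bfeta_A + \bOmega\bfeta_B$, so that the denominator becomes
\[
\bw'\bSigma\bw \;=\; \bfeta_A'\bLambda\bfeta_A + \bfeta_B'\bTheta\bfeta_B \;\ge\; \lambda_m\|\bfeta_A\|^2 + c_0\|\bfeta_B\|^2.
\]
For the numerator I use the split from (\ref{Norm_BreakDown1}) and the block representation (\ref{Norm_BreakDown2}) to decompose
\[
\bw'(\hat\bSigma^S - \bSigma)\bw \;=\; \underbrace{\bfeta_A'\bLambda^{1/2}M_{11}\bLambda^{1/2}\bfeta_A}_{\text{spike-spike}} + \underbrace{\bfeta_B'\bTheta^{1/2}M_{22}\bTheta^{1/2}\bfeta_B}_{\text{noise-noise}} + \underbrace{2\bfeta_A'\bLambda^{1/2}M_{12}\bTheta^{1/2}\bfeta_B}_{\text{cross}} + \bw'(\hat\bSigma_u^\top-\bSigma_u)\bw,
\]
and bound each piece separately against the denominator.

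Two of the four pieces are routine. Theorem~\ref{thm_RelSpectral} gives $\|\hat\bSigma_u^\top - \bSigma_u\| = O_P(m_p w_T^{1-q})$, so the idiosyncratic term is at most $O_P(m_p w_T^{1-q})\|\bw\|^2$; dividing by $\bw'\bSigma\bw \ge c_0\|\bw\|^2$ yields the $m_p w_T^{1-q}$ contribution. The spike-spike piece is bounded by $\Delta_{L1}\bfeta_A'\bLambda\bfeta_A\le \Delta_{L1}\lambda_1\|\bfeta_A\|^2$ with $\Delta_{L1}=O_P(T^{-1/2})$ by Theorem~\ref{thm_RelSpectral}; divided by $\lambda_m\|\bfeta_A\|^2$ this gives $O_P(T^{-1/2})$ since $\lambda_1/\lambda_m$ is bounded (Assumption~\ref{assump1:factor}).

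The crux is the noise-noise piece. The naive operator-norm route through $\Delta_{L2}=O_P(p/T+1/\lambda_m)$ does not vanish for $\alpha<1$, so I instead exploit the $\ell_1$ hypothesis by using $|\bfeta_B'A\bfeta_B|\le \|\bfeta_B\|_1^2\|A\|_{\max}$ with $A=\bOmega'(\hat\bGamma\hat\bLambda^S\hat\bGamma'-\bB\bB')\bOmega$ and sharply controlling the max-norm. Writing $\bOmega'\hat\bGamma\hat\bLambda^S\hat\bGamma'\bOmega = \sum_{j=1}^m \hat\lambda_j^S \hat\bxi_{jB}\hat\bxi_{jB}'$ and invoking the max-norm bound $\|\hat\bxi_{jB}\|_{\max}=O_P(p/(T\lambda_j^{3/2})+\sqrt{\log p/(T\lambda_j)})$ from Theorem~\ref{thm3.2}(ii), I obtain
\[
\sum_{j=1}^m \hat\lambda_j^S\|\hat\bxi_{jB}\|_{\max}^2 \;=\; O_P\bigl(p^{2-2\alpha-2\beta} + T^{-1}\log p\bigr) \;=\; O_P\bigl(T^{-2(\alpha+\beta-1)/\beta}+T^{-1}\log p\bigr).
\]
The complementary contribution $\|\bOmega'\bB\bB'\bOmega\|\le \|\bOmega'\bB\|^2 = O(1/\lambda_m) = O(T^{-\alpha/\beta})$ follows from Davis--Kahan applied to the perturbation $\bSigma - \bB\bB' = \bSigma_u$ (so $\|\sin\Theta(\bGamma,\bU)\|\le \|\bSigma_u\|/\lambda_m$), and one checks $\alpha/\beta\ge \min\{2(\alpha+\beta-1)/\beta,1/2\}$ under $\alpha>1/2$ and $\alpha+\beta>1$, so this term is absorbed. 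Combining with $\|\bfeta_B\|_1=O(1)$ and the denominator lower bound yields the $T^{-2(\alpha+\beta-1)/\beta}$ contribution. The cross term is absorbed by $2|\bfeta_A'\bLambda^{1/2}M_{12}\bTheta^{1/2}\bfeta_B|\le \|M_{12}\|(\bfeta_A'\bLambda\bfeta_A+\bfeta_B'\bTheta\bfeta_B)$ after deriving an analogous refined bound on $\|M_{12}\|$.

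For case (ii), when $\alpha\ge 1$ or $\|\bfeta_A\|\ge C_2$, the denominator is boosted to at least $c_0 C_2^2\lambda_m\asymp p^\alpha$, which sharpens the relative noise-noise rate to $O_P(T^{-(3\alpha+2\beta-2)/\beta})$. Under $\alpha>1/2$ and $\alpha+\beta>1$ this is always dominated by $T^{-1/2}$, so the overall rate is $T^{-1/2}+m_p w_T^{1-q}$ and the spike-spike term is the bottleneck. The main obstacle is the noise-noise control: the operator-norm rate $\Delta_{L2}$ from Theorem~\ref{thm_RelSpectral} is too crude, and only by simultaneously combining the $\ell_1$ constraint on $\bfeta_B$, the max-norm eigenvector bound of Theorem~\ref{thm3.2}(ii), and the near-orthogonality $\bOmega'\bB\approx 0$ afforded by the large spectral gap can one extract a relative rate that tends to zero.
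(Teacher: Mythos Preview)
Your approach is essentially the same as the paper's: decompose $\bw$ in the eigen-basis of $\bSigma$, split the numerator into the low-rank and idiosyncratic parts, control the spike--spike block via $\Delta_{L1}$, the idiosyncratic block via $\Delta_S$, and --- crucially --- control the noise--noise block by combining the $\ell_1$ hypothesis on $\bfeta_B$ with the max-norm bound $\|\hat\bxi_{jB}\|_{\max}$ from Theorem~\ref{thm3.2}(ii) and the Davis--Kahan bound $\|\bOmega'\tilde\bGamma\|=O(1/\lambda_m)$. This is exactly what the paper does, and your identification of the noise--noise block as ``the crux'' and of the inadequacy of the naive $\Delta_{L2}$ bound matches the paper's reasoning.

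Two points deserve tightening. First, your cross-term step is the weakest link: bounding via $\|M_{12}\|$ in operator norm does not obviously yield the required rate, and ``an analogous refined bound on $\|M_{12}\|$'' is left unspecified. The paper instead exploits that $\bE=\hat\bGamma\hat\bLambda^S\hat\bGamma'-\bB\bB'$ is a difference of two positive semidefinite matrices, so that for each PSD piece one has $|\bx'\bM\by|\le \tfrac12(\bx'\bM\bx+\by'\bM\by)$ by Cauchy--Schwarz; this absorbs the cross term directly into the already-controlled diagonal blocks without any separate operator-norm estimate. Second, in case~(ii) your argument for $\alpha\ge 1$ is not quite right: when $\alpha\ge 1$ the denominator is \emph{not} boosted to order $\lambda_m$ (there is no assumption $\|\bfeta_A\|\ge C_2$ in that sub-case); rather, the numerator term $c_m^2=(p^{1-\alpha}/T)^2$ itself becomes $O(T^{-2})=o(T^{-1/2})$, which is how the paper obtains the $T^{-1/2}$ rate there. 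The denominator-boost argument applies only to the sub-case $\|\bfeta_A\|\ge C_2$.
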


The condition $\|\bfeta_B\|_1 \le C_1$ is obviously much weaker than $\|\bw\|_1 = O(1)$. It does not limit the total exposure of investor's position, but only put constraint on investment of the non-spiked section. Note that under the conditions of Theorem~\ref{thm_Risk}, $p/(T\lambda_j)  \to 0$, and S-POET and POET are approximately the same. The stated result hold for POET too.

\subsection{Estimation of false discovery proportion}
Another important application of the factor model is the estimation of false discovery proportion.  For simplicity, we  assume Gaussian data $\bX_i \sim N(\bmu,\bSigma)$ with an unknown correlation matrix $\bSigma$ and wish to test separately which coordinates of $\bmu$ are nonvanishing. Consider the test statistic $\bZ = \sqrt{n}\bar\bX$ where $\bar \bX$ is the sample mean of all data. Then $\bZ \sim N(\bmu^*, \bSigma)$ with $\bmu^*=\sqrt{n}\bmu$ and the problem is to test
$$
H_{0j}: \mu_j^* = 0  \quad\quad \text{vs} \quad\quad H_{1j}: \mu_j^* \ne 0.
$$
Define the number of discoveries $R(t) = \#\{j: P_j \le t\}$ and the number of false discoveries $V(t) = \#\{\mbox{true null}: P_j \le t\}$, where $P_j$ is the p-value associated with the $j^{th}$ test. Note that $R(t)$ is observable while $V(t)$ needs to be estimated. The false discovery proportion (FDP) is defined as $\FDP(t) = V(t)/R(t)$.

Recently \cite{FanHan13} proposed to employ the factor structure
\beq \label{FDP_model}
\bSigma = \bB\bB' + \bA\,,
\eeq
where $\bB = (\sqrt{\lambda_1}\bxi_1, \dots,\sqrt{\lambda_m}\bxi_m)$ and $\lambda_j$ and $\bxi_j$ are respectively the $j^{th}$ eigenvalue and eigenvector of $\bSigma$ as before. Then $\bZ$ can be stochastically decomposed as
$$
\bZ = \bmu^* + \bB \bW +\bK\,,
$$
where $\bW \sim N({\bf 0}, \bI_m)$ are $m$ common factors and $\bK \sim N({\bf 0}, \bA)$ independent of $\bW$ are the idiosyncratic errors. For simplicity, assume the maximal
number of nonzero elements of each row of $\bA$ is bounded. In \cite{FanHan13}, they demonstrated that a good approximation for $\FDP(t)$ is
\beq \label{FDP_A}
\FDP_A(t) = \sum_{i=1}^p[\Phi(a_i(z_{t/2}+\eta_i))+\Phi(a_i(z_{t/2}-\eta_i))]/R(t)\,,
\eeq
where $z_{t/2}$ is the $t/2$-quantile of the standard normal distribution, $a_i = (1-\|\bb_i\|^2)^{-1/2}$, $\eta_i = \bb_i' \bW$ and $\bb_i'$ is the $i^{th}$ row of $\bB$.

Realized factors $\bW$ and the loading matrix $\bB$ are typically unknown. If a generic estimator $\hat\bSigma$ is provided, then we are able to estimate $\bB$ and thus $\bb_i$ from its empirical eigenvalues and eigenvectors $\hat\lambda_j$'s and $\hat\bxi_j$'s. $\bW$ can be estimated by the least-squares estimate $\hat\bW = (\hat\bB' \hat\bB)^{-1} \hat\bB' \bZ$. \cite{FanHan13} proposed the following estimator for $\FDP_A(t)$:
\beq \label{FDP_U}
\widehat {\FDP}_U(t) = \sum_{i=1}^p [\Phi(\hat a_i(z_{t/2}+\hat\eta_i))+\Phi(\hat a_i(z_{t/2}-\hat\eta_i))]/R(t)\,,
\eeq
where $\hat a_i = (1-\|\hat \bb_i\|^2)^{-1/2}$ and $\hat \eta_i = \hat\bb_i' \hat \bW$. The following assumptions are in their paper.

\begin{assum} \label{assump1:FDP}
There exists a constant $h > 0$ such that (i) $R(t)/p > h p^{-\theta}$ for $h > 0$ and $\theta \ge 0$ as $p\to \infty$ and (ii) $\hat a_i \le h, a_i \le h$ for all $i = 1, \dots, p$.
\end{assum}

They showed that if $\hat\bSigma$ is based on the POET estimator with a spike size $\lambda_m \asymp p$, under Assumptions \ref{assPoet1} - \ref{assPoet4} together with Assumption~\ref{assump1:FDP},
\beq \label{Eqn:FDP_oldrate}
|\widehat{\FDP}_{U, POET}(t) - \FDP_A(t) | = O_P\Big(p^{\theta} \Big(\sqrt{\frac{\log p}{T}} + \frac{\|\bmu^*\|}{\sqrt{p}}\Big)\Big)\,.
\eeq
Again we can relax the assumption of spike magnitude from order $p$ to much weaker Assumption \ref{assump1:factor}. Since $\bSigma$ is  a correlation matrix,  $\lambda_1 \le \tr(\bSigma) = p$.
This, together with Assumption \ref{assump1:factor}, leads us to consider that all leading eigenvalues are of order proportional to $p^{\alpha}$ for $1/2 < \alpha \le 1$.

Now apply the proposed S-POET method to obtain $\hat\bSigma^S$ and use it for FDP estimation. Then we have the following theorem.

\begin{thm}  \label{thm_FDP}
If Assumptions \ref{assump1}, \ref{assump2}, \ref{assump1:factor}, \ref{assump2:factor}, \ref{assump1:FDP} are applied to Gaussian independent data $\bX_i \sim N(\bmu,\bSigma)$, and $\lambda_j \propto p^{\alpha}$ for $j = 1,\dots,m$, $T \ge C p^{\beta}$ for $1/2 < \alpha \le  1, 0 \le \beta < 1, \alpha + \beta > 1$, we have
$$
|\widehat{\FDP}_{U, SPOET}(t) - \FDP_A(t) | = O_P\Big(p^{\theta}(\|\bmu^*\|p^{-\frac12} + T^{-\min\{\frac{\alpha+\beta-1}{\beta}, \frac12\}})\Big)\,.
$$
\end{thm}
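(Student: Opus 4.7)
The plan is to adapt the analysis of \cite{FanHan13} for bounding $|\widehat{\FDP}_U(t) - \FDP_A(t)|$, substituting in the sharper estimation rates for $\hat\bb_i$ and $\hat\bW$ that S-POET affords under the relaxed spike magnitude assumption. Since $\Phi$ is Lipschitz with constant $1/\sqrt{2\pi}$, a term-by-term application of the mean-value theorem gives
$$|\widehat{\FDP}_{U,SPOET}(t) - \FDP_A(t)| \le \frac{C}{R(t)}\sum_{i=1}^p\left\{|\hat a_i - a_i|(|z_{t/2}|+|\eta_i|) + \hat a_i|\hat\eta_i - \eta_i|\right\}.$$
By Assumption \ref{assump1:FDP}, the factor $p/R(t)$ contributes at most $h^{-1}p^\theta$, so it suffices to obtain uniform bounds on $\max_i|\hat a_i - a_i|$ and $\max_i|\hat\eta_i - \eta_i|$, which via triangle inequality reduce to bounding $\max_i\|\hat\bb_i - \bb_i\|$ and $\|\hat\bW - \bW\|_\infty$.

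For the loading row error, I would decompose
$\hat b_{ij} - b_{ij} = (\sqrt{\hat\lambda_j^S} - \sqrt{\lambda_j})\hat\xi_{j,i}^{(Y)} + \sqrt{\lambda_j}(\hat\xi_{j,i}^{(Y)} - \xi_{j,i}).$
For the eigenvalue-root factor, Theorem \ref{thm3.1} together with the S-POET soft thresholding eliminates the dominant $\bar c c_j$ bias and leaves $|\sqrt{\hat\lambda_j^S} - \sqrt{\lambda_j}| = O_P(\sqrt{\lambda_j}\,T^{-1/2} + \sqrt{p/(T\lambda_j)})$, whose second piece is of order $p^{(1-\alpha-\beta)/2}$. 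For the element-wise eigenvector error, decompose $\hat\bxi_j^{(Y)} - \bxi_j = \bGamma_A(\hat\bxi_{jA} - \be_{jA}) + \bGamma_B\hat\bxi_{jB}$; the first summand is uniformly small since Assumption \ref{assump2:factor}(iv) implies $\|\bGamma_A\|_{\max} = O(p^{-1/2})$ and Theorem \ref{thm3.2}(i) controls $\|\hat\bxi_{jA} - \be_{jA}\|$, while the second summand is bounded coordinate-wise by combining the novel max-norm bound $\|\hat\bxi_{jB}\|_{\max}$ of Theorem \ref{thm3.2}(ii) with the orthonormality of $\bGamma_B$. This yields $\max_i\|\hat\bb_i - \bb_i\| = O_P\big(p^{(\alpha-1)/2}(T^{-1/2} + p^{(1-\alpha-\beta)/2})\big)$ up to polylogarithmic factors.

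For the factor score error, writing $\hat\bW = (\hat\bB'\hat\bB)^{-1}\hat\bB'\bZ$ and $\bZ = \bmu^* + \bB\bW + \bK$ yields the decomposition
$\hat\bW - \bW = (\hat\bB'\hat\bB)^{-1}\hat\bB'[\bmu^* + \bK] + [(\hat\bB'\hat\bB)^{-1}\hat\bB'\bB - \bI]\bW.$
Since $\lambda_{\min}(\hat\bB'\hat\bB) \gtrsim \lambda_m \asymp p^\alpha$ by the preceding step plus Weyl's inequality, the first term contributes $O_P(\|\bmu^*\|/\sqrt{\lambda_m})$ from the mean shift and $O_P(T^{-1/2})$ from Gaussian concentration of $\hat\bB'\bK/\lambda_m$ using $\|\bA\|$ bounded, while the last term inherits the previous step's rate. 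Multiplying $\|\hat\bW - \bW\|_\infty$ by $\|\bb_i\| = O(p^{(\alpha-1)/2})$ fuses the $\|\bmu^*\|/\sqrt{\lambda_m}$ contribution with $\|\bb_i\|$ to give the advertised $\|\bmu^*\|p^{-1/2}$ term; the $T^{-1/2}$ stochastic part and the $p^{(1-\alpha-\beta)/2}$ bias part combine, using $T \asymp p^\beta$ so that $p^{1-\alpha-\beta} = T^{-(\alpha+\beta-1)/\beta}$, to form $T^{-\min(1/2,\,(\alpha+\beta-1)/\beta)}$. Multiplying by the $p^\theta$ factor from Step 1 yields the stated bound.

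The main obstacle is the element-wise eigenvector control in the second step: transferring the max-norm bound on $\hat\bxi_{jB}$, which is stated in the diagonalized $\bX = \bGamma'\bY$ coordinate system of Theorem \ref{thm3.2}, into a coordinate-wise bound on $\bGamma_B\hat\bxi_{jB}$ in the original $\bY$ coordinates. This requires exploiting the structural assumption $|b_{ij}| \lesssim \sqrt{\lambda_j/p}$, which translates into non-trivial control on the rows of $\bGamma_B$, plus careful bookkeeping over the $m$ spiked components to ensure the aggregated error across $i \le p$ remains at the claimed $p^\theta$ scale. The remaining Gaussian concentration bounds for $\hat\bB'\bK$ and the algebraic manipulations for $(\hat\bB'\hat\bB)^{-1}$ are standard given Theorem \ref{thm_RelSpectral}.
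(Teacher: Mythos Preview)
Your route differs from the paper's in a meaningful way. The paper does not attempt element-wise control of $\hat\bb_i$ or invoke the max-norm bound on $\hat\bxi_{jB}$ from Theorem~\ref{thm3.2}(ii). Instead, it imports directly from \cite{FanHan13} the aggregate bound
\[
|\Delta_1| \le C\sum_{j=1}^m\Big(|\hat\lambda_j^S - \lambda_j| + \lambda_j\|\hat\bxi_j^S - \bxi_j\| + \sqrt{p}\,(\|\bmu^*\|+\sqrt{p})\,\|\hat\bxi_j^S - \bxi_j\|\Big),
\]
where $\hat\lambda_j^S$ and $\hat\bxi_j^S$ denote the eigenvalues and eigenvectors of the full S-POET estimate $\hat\bSigma^S$, and then controls these quantities via Weyl's inequality and the Davis--Kahan $\sin\theta$ theorem using only the operator-norm rate $\|\hat\bSigma^S - \bSigma\| = O_P(\Delta_{L1}\lambda_1 + \Delta_{L2} + \Delta_S)$ from Theorem~\ref{thm_RelSpectral}. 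This is essentially a two-line argument once Theorem~\ref{thm_RelSpectral} is available; no decomposition of $\hat\bW - \bW$ or element-wise eigenvector analysis is needed.

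Your proposed resolution of the obstacle you flag has a gap: the structural assumption $|b_{ij}| \lesssim \sqrt{\lambda_j/p}$ controls the rows of $\bGamma_A$ (since $\bB \approx \bGamma_A\bLambda_A^{1/2}$), not the rows of $\bGamma_B$, so it does not help transfer $\|\hat\bxi_{jB}\|_{\max}$ into a coordinate-wise bound on $\bGamma_B\hat\bxi_{jB}$. What actually works is the cruder observation $|(\bGamma_B)_i\hat\bxi_{jB}| \le \|(\bGamma_B)_i\|\,\|\hat\bxi_{jB}\| \le \|\hat\bxi_{jB}\| = O_P(\sqrt{c_j})$ from Lemma~\ref{lem6.6}, which makes the max-norm bound (\ref{eeq3.6}) unnecessary here. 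Carrying this through your program would likely recover the stated rate, but the paper's $\ell_2$/operator-norm route is considerably shorter and sidesteps the coordinate-transfer issue entirely.
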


Comparing the result with (\ref{Eqn:FDP_oldrate}), this convergence rate attained by S-POET is more general than the rate achieved before. The only difference is the second term, which is $O(T^{-1/2})$ if $\alpha +\frac12 \beta \ge 1$ and $T^{-(\alpha+\beta-1)/\beta}$ if $\alpha +\frac12 \beta < 1$. So we relax the condition from $\alpha = 1$ in \cite{FanHan13} to $\alpha \in (1/2, 1]$. This means a weaker signal than order $p$ is actually allowed to obtain a consistent estimate of false discovery proportion.

\section{Simulations} \label{sec5}

We conducted some simulations to demonstrate the finite sample behaviors of empirical eigen-structure, the performance of S-POET, and validity of applying it to estimate false discovery proportion.

\subsection{Eigen-structure}
In this simulation, we set $n=50$, $p = 500$ and $\bSigma = \diag(50,20,10,1,\dots,1)$, which has three spikes ($m = 3$) $\lambda_1 = 50, \lambda_2 = 20, \lambda_3 = 10$ and corresponding $c_1 = 0.2, c_2 = 0.5, c_3 = 1$.  Data was generated from multivariate Gaussian. The number of simulations is $1000$. The histograms of the standardized empirical eigenvalues $\sqrt{n/2}(\hat \lambda_j/\lambda_j - 1 - c_j)$, and their associated asymptotic distributions (standard normal) are plotted in Figure \ref{fig:eigenVa}. The approximations are very good even for this low sample size $n = 50$.

\begin{figure}
        \centering
       \includegraphics[width=\textwidth]{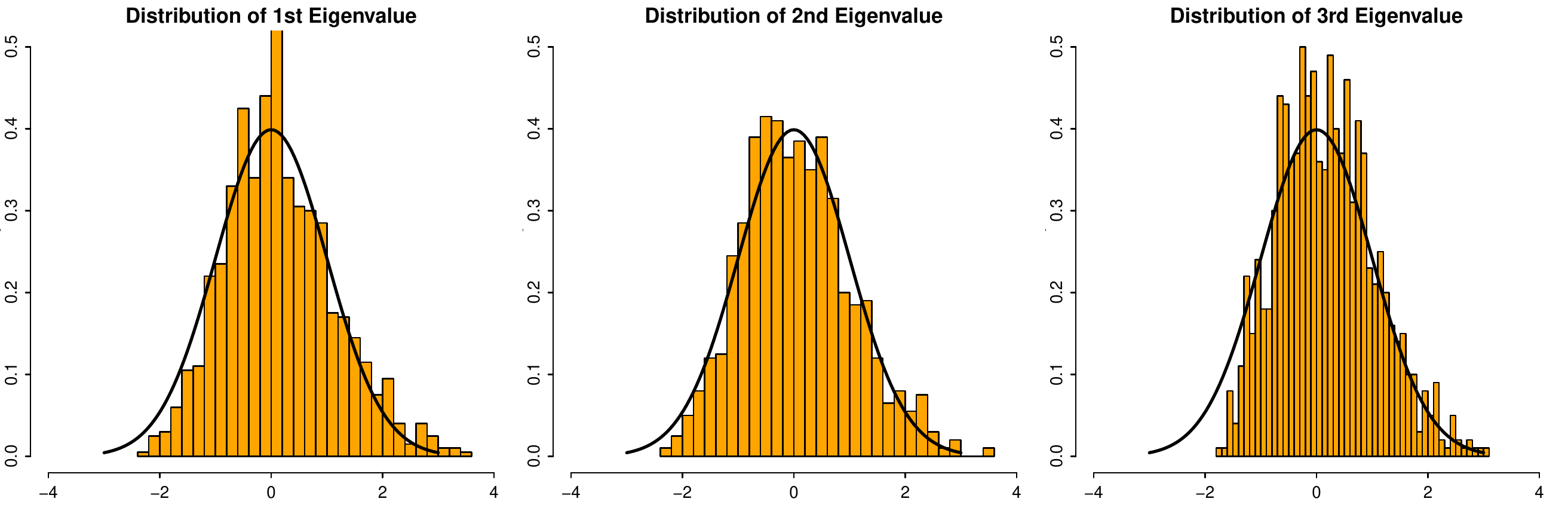}
        \caption{Behaviors of empirical eigenvalues. The empirical distributions of $\sqrt{n/2}(\hat \lambda_j/\lambda_j - 1 - c_j)$ for $j = 1, 2, 3$ are compared with their asymptotic distributions $N(0,1)$.}\label{fig:eigenVa}
\end{figure}

Figure \ref{fig:eigenVe} shows the histograms of $\sqrt{n}(\hat\bxi_{jA}/\|\hat\bxi_{jA}\| - \be_{jA})$ for the first three elements (the spiked part) of the first three eigenvectors. According to the asymptotic result, the values in the diagonal position should stochastically converge to $0$ as observed. On the other hand, plots in the off-diagonal position should converge in distribution to $N(0, 1)$ for $k \ne j$ after standardization, which is indeed the case. We also report the correlations between the first three elements for the three eigenvectors based on those $1000$ repetitions in Table \ref{tab:corr}. The correlations are all quite close to $0$, which is consistent with the theory.

\begin{figure}
        \centering
        \includegraphics[width=\textwidth]{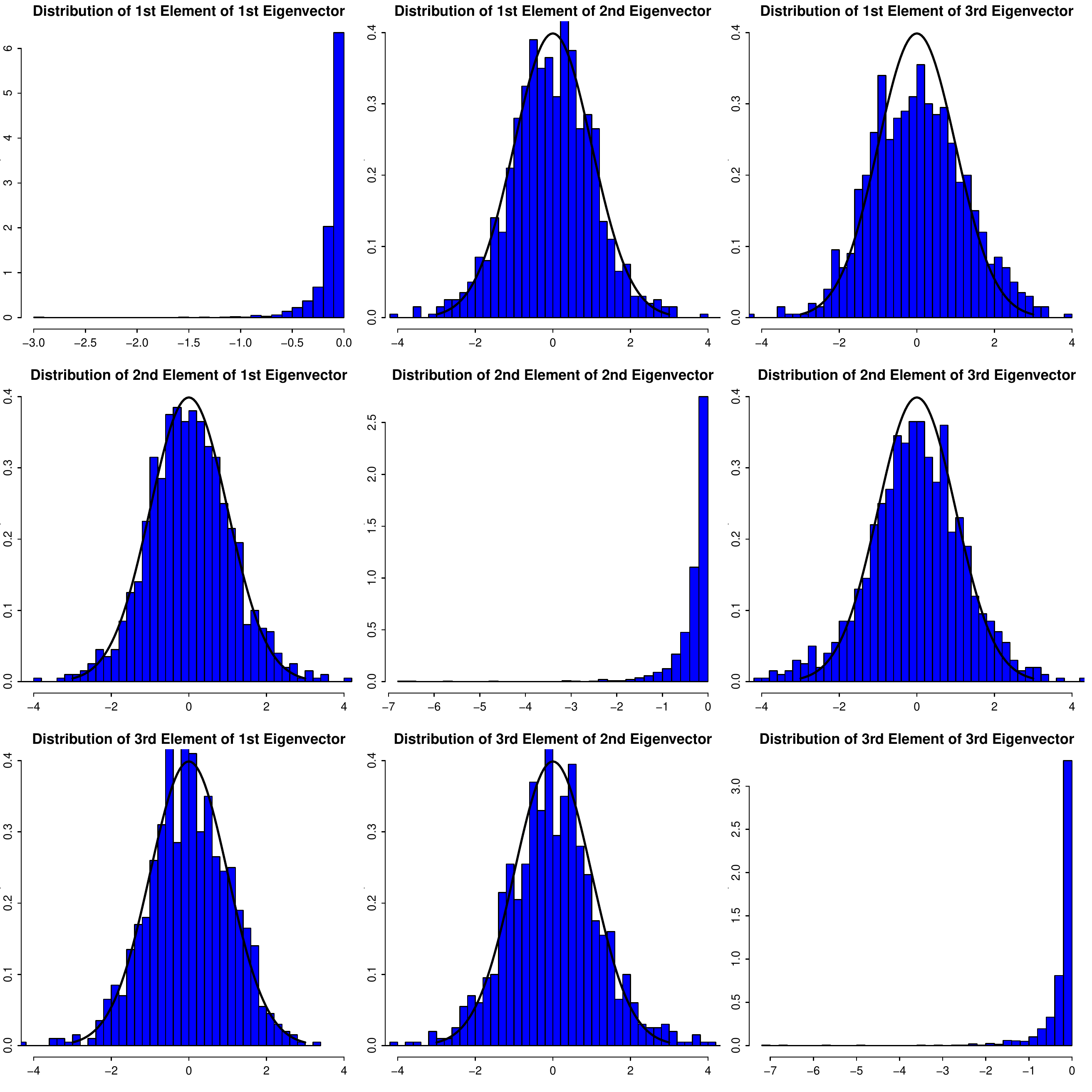}
        \caption{Behaviors of empirical eigenvectors. The histograms of the $k^{th}$ elements of the $j^{th}$ empirical vectors are depicted in the location $(k,j)$ for $k,j \le 3$. Off-diagonal plots of values $\sqrt{n} \hat\xi_{jk}/\|\hat\bxi_{jA}\|/\sqrt{\frac{c_j c_k}{(c_j - c_k)^2}}$ are compared to their asymptotic distributions $N(0, 1)$ for $k \ne j$ while diagonal plots of values $\sqrt{n}(\hat\xi_{jj}/\|\hat\bxi_{jA}\| - 1)$ are compared to stochastically $0$.}\label{fig:eigenVe}
\end{figure}

For the normalized nonspiked part $\hat\bxi_{jB}/\|\hat\bxi_{jB}\|$, it should be distributed uniformly over the unit sphere. This can be tested by the results of \cite{CaiFanJia13}. For any  $n$ data points $\bX_1, \dots, \bX_n$  on $p$-dimensional sphere, define the normalized empirical distribution of angles of each pair of vectors as
$$
\mu_{n,p} = \frac{1}{\binom{n}{2}} \sum_{1\le i < j \le n} \delta_{\sqrt{p-2}(\pi/2 - \Theta_{ij})}\,,
$$
where $\Theta_{ij} \in [0, \pi]$ is the angle between vectors $\bX_i$ and $\bX_j$. When the data are generated uniformly from a sphere, $\mu_{n,p}$ converges to the standard normal distribution with probability $1$.
Figure \ref{fig:eigenAn} shows the empirical distributions of all pairwise angles of the realized $\hat\bxi_{jB}/\|\hat\bxi_{jB}\|$ $(j=1, 2, 3)$ in 1000 simulations. Since number of such pairwise angels is ${1000 \choose 2}$, the empirical distributions and the asymptotic distributions $N(0,1)$ are almost identical. The normality holds even for a small subset of the angles.

\begin{table}
  \centering
    \caption{The correlations between the first three elements for each of the three empirical eigenvectors based on $1000$ repetitions}  \label{tab:corr}
  \begin{tabular}{l|ccc}
    \hline
    \hline
    & 1st \& 2nd elements & 1st \& 3rd elements & 2nd \& 3rd elements \\
    \hline
    1st Eigenvector & 0.00156 & -0.00192 & -0.04112 \\
    2nd Eigenvector & -0.02318 & -0.00403 & 0.01483 \\
    3rd Eigenvector & -0.02529 & -0.04004 & 0.12524 \\
    \hline
    \hline
  \end{tabular}
  \end{table}

Lastly, we did simulation to verify the rate difference of $\langle \hat\bxi_{j}, \be_{j} \rangle$ for $m = 1$ and $m > 1$, revealed in Theorem \ref{thm3.2} (iii). We choose $n = [10\times1.2^l]$ for $l = 0,\dots,9$, $p = [n^3/100]$, where $[\cdot]$ represents rounding. We set $\lambda_j = 1$ for $j \ge 3$ and consider two situations: (1) $\lambda_1 = p, \lambda_2 = 1$, (2) $\lambda_1 = 2\lambda_2 = p$. Under both cases, simulations were carried out  500 times and the corresponding angle of empirical eigenvector and truth was calculated for each simulation. The logarithm of the median absolute error of $\langle \hat\bxi_{1}, \be_{1} \rangle - 1/\sqrt{1+ c_1}$ was plotted against $\log(n)$. Under the two situations, the rate of convergence is $O_P(n^{-3/2})$ and $O_P(n^{-1})$ respectively. Thus the slope of the curves should be $-3/2$ for a single spike and $-1$ for two spikes, which is indeed as the case as shown in Figure~\ref{fig:12spike}.

In short, all the simulation results match well with the theoretical results for the ultra high dimensional regime.

\begin{figure}
        \centering
        \includegraphics[width=\textwidth]{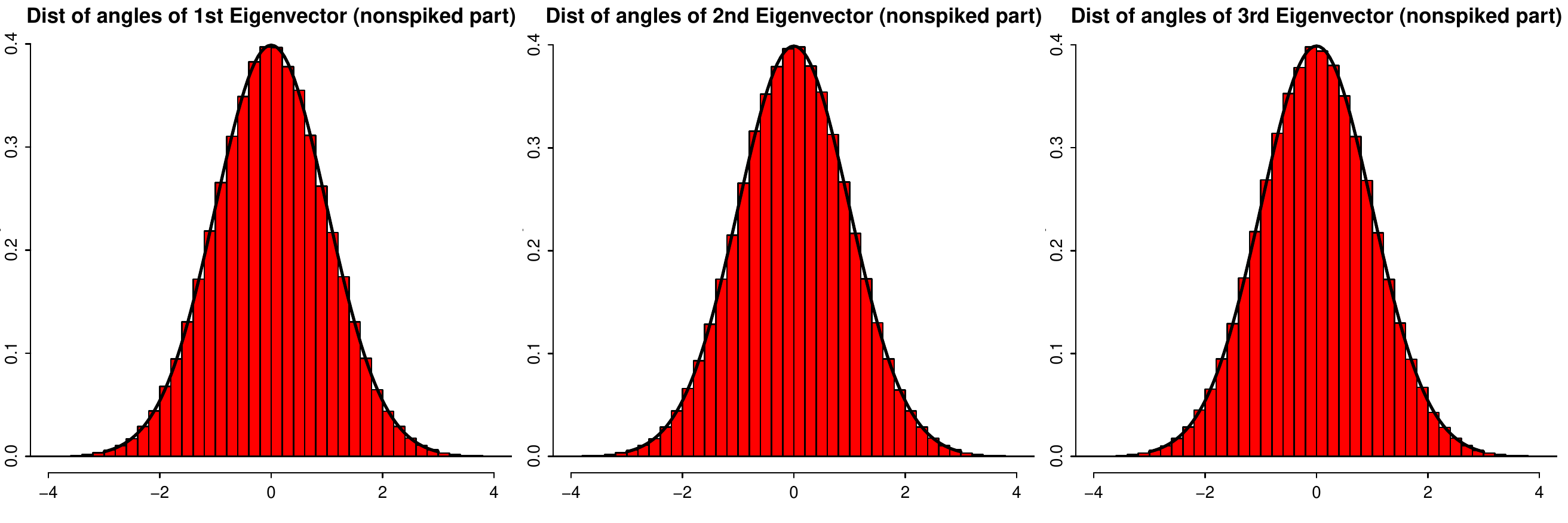}
        \caption{ The empirical distributions of all pairwise angles of the 1000 realized $\hat\bxi_{jB}/\|\hat\bxi_{jB}\|$ $(j=1, 2, 3)$ compared with their asymptotic distributions
        $N(0, 1)$.}\label{fig:eigenAn}
\end{figure}

\begin{figure}
        \centering
       \includegraphics[width=0.4\textwidth]{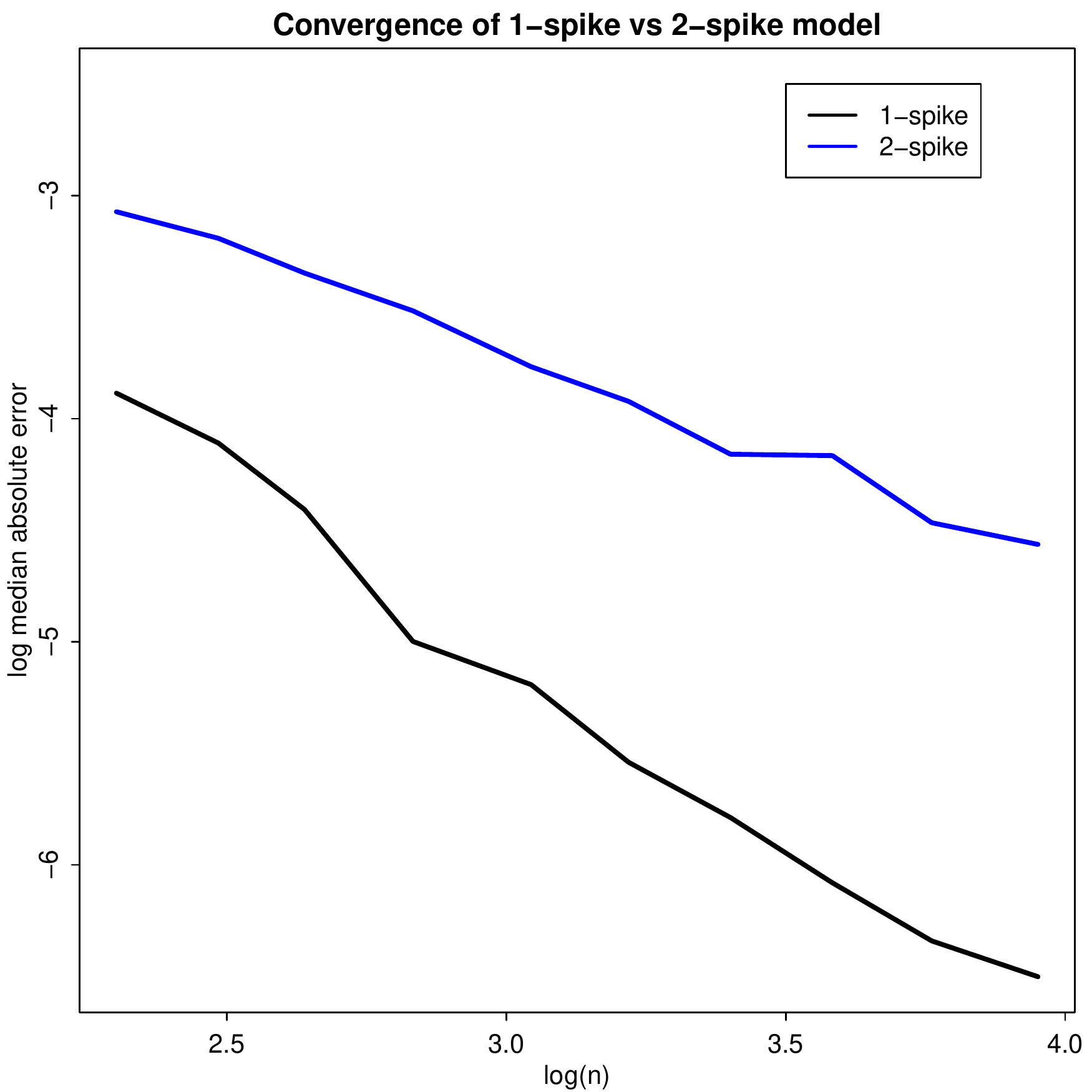}
        \caption{Difference of converged rate of $\langle \hat\bxi_{1}, \be_{1} \rangle - 1/\sqrt{1+ c_1}$ for a single spike model and a two-spike model. The error should be expected to decrease at the rate of $O_P(n^{-3/2})$ and $O_P(n^{-1})$ respectively.}
        \label{fig:12spike}
\end{figure}

\subsection{Performance of S-POET}
We demonstrate the effectiveness of S-POET in comparison with the POET. A similar setting to the last section was used, i.e. $m=3$ and $c_1 = 0.2, c_2 = 0.5, c_3 = 1$. The sample size $T$ ranges from $50$ to $150$ and $p = [T^{3/2}]$. Note that when $T=150$, $p \approx 1800$. The spiked eigenvalues are determined from $p/(T\lambda_j) = c_j$ so that $\lambda_j$ is of order $\sqrt{T}$, which is much smaller than $p$. For each pair of $T$ and $p$, the following steps are used to generate observed data from the factor model for $200$ times.

\begin{itemize}
\item[(1)] Each row of $\bB$ is simulated from the standard multivariate normal distribution and the $j^{th}$ column is normalized to have norm $\lambda_j$ for $j =1, 2, 3$.
\item[(2)] Each row of $\bF$ is simulated from standard multivariate normal distribution.
\item[(3)] Set $\bSigma_u = \diag(\sigma_1^2, \dots, \sigma_p^2)$ where $\sigma_i$'s are generated from Gamma($\alpha,\beta$) with $\alpha = \beta = 100$ (mean $1$, standard deviation $0.1$). The idiosyncratic error $\bU$ is simulated from $N({\bf 0}, \bSigma_u)$.
\item[(4)] Compute the observed data $\bY = \bB\bF' + \bU$.
\end{itemize}

\begin{figure}
        \centering
        \begin{subfigure}[b]{0.47\textwidth}
                \includegraphics[width=\textwidth]{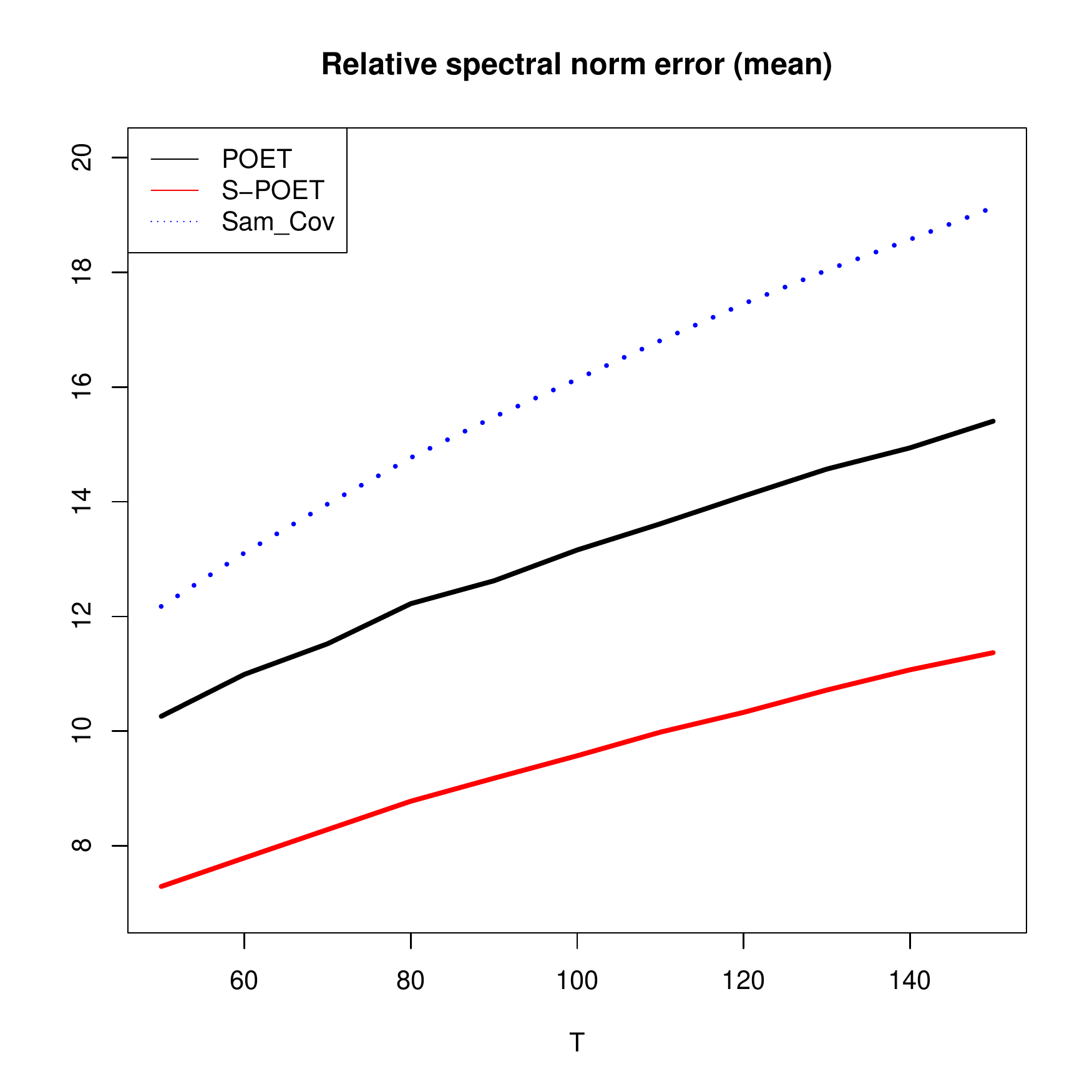}
                \label{fig:normRS}
        \end{subfigure}%
        ~
        \begin{subfigure}[b]{0.47\textwidth}
                \includegraphics[width=\textwidth]{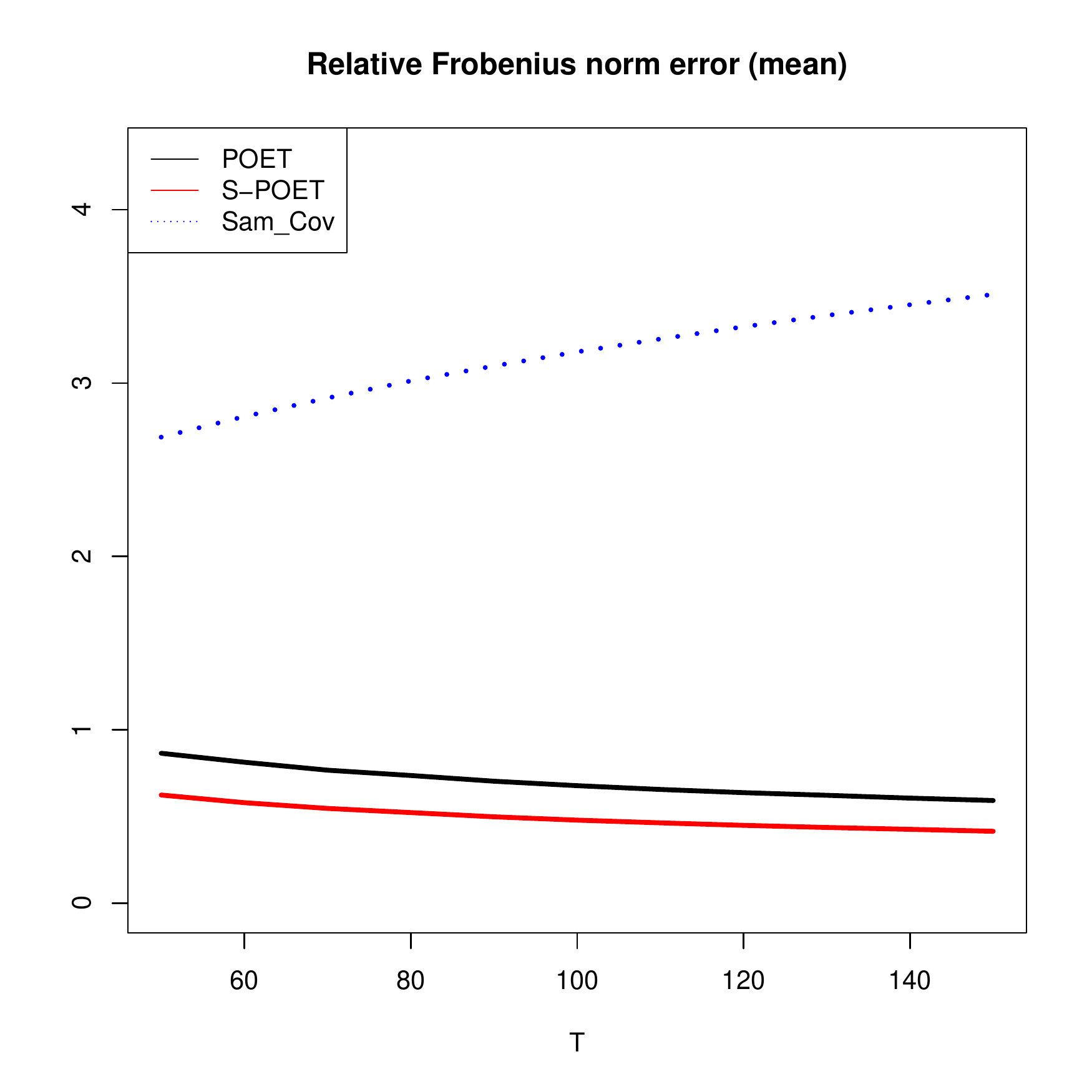}
                \label{fig:normRF}
        \end{subfigure}
        ~
        \begin{subfigure}[b]{0.47\textwidth}
                \includegraphics[width=\textwidth]{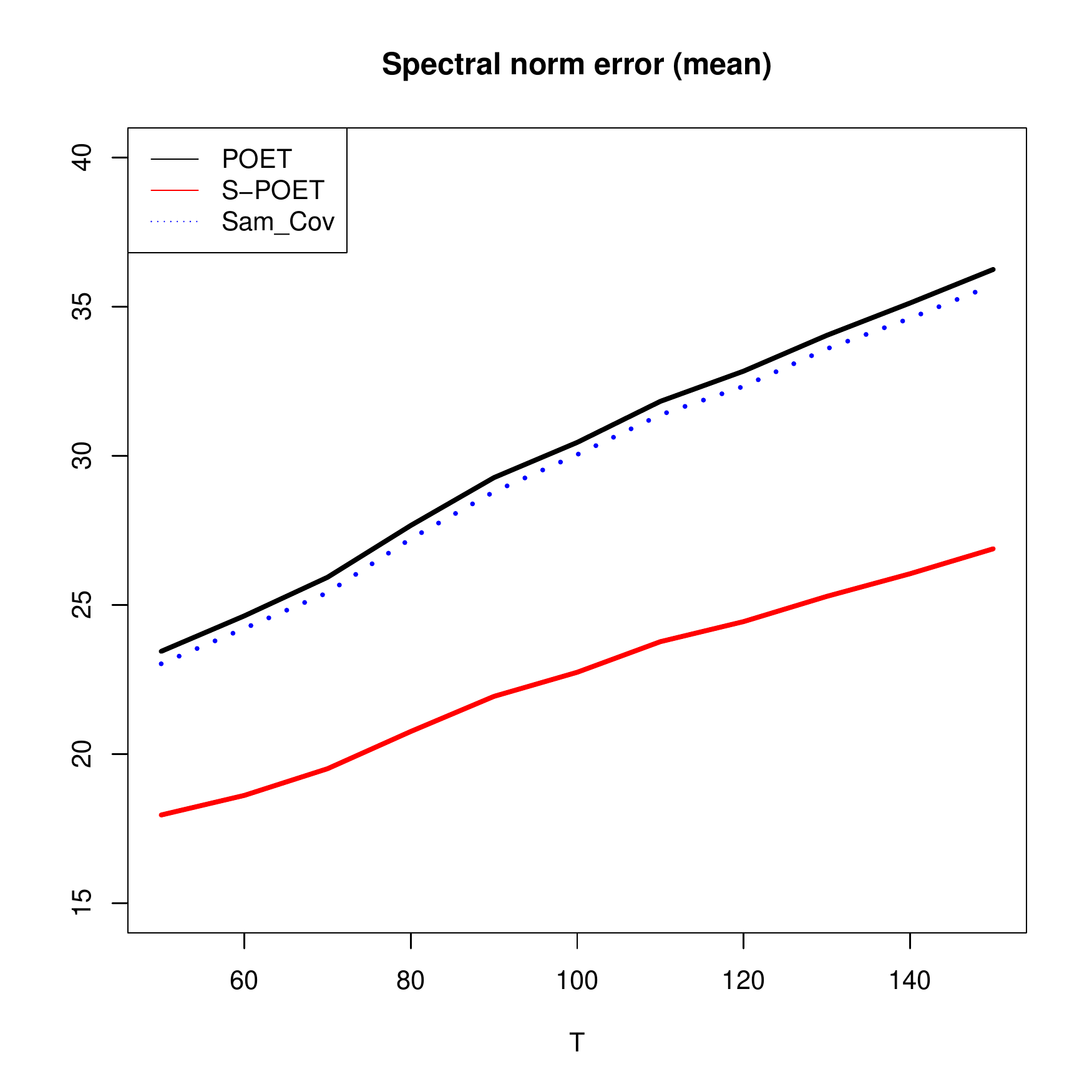}
                \label{fig:normS}
        \end{subfigure}
        ~
        \begin{subfigure}[b]{0.47\textwidth}
                \includegraphics[width=\textwidth]{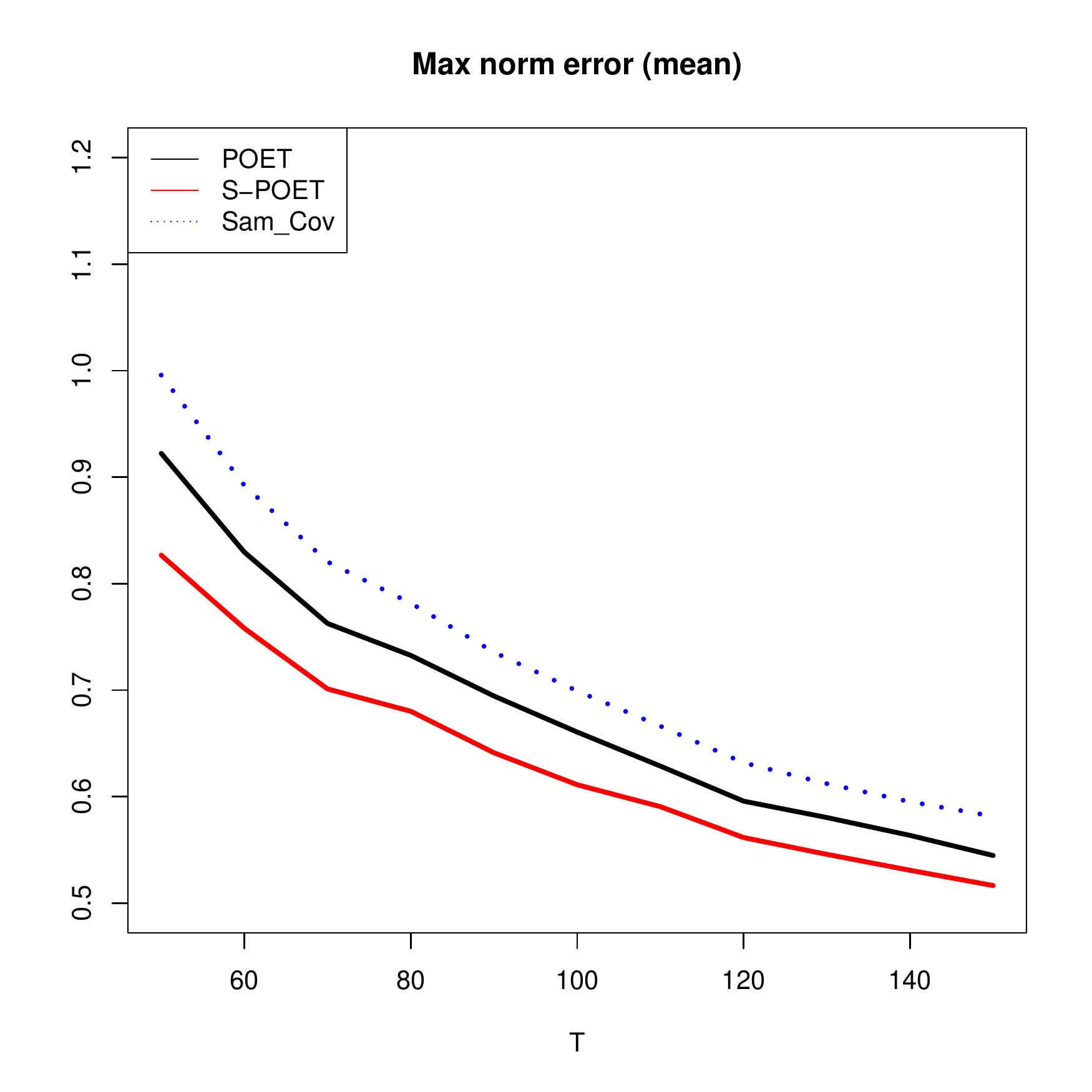}
                \label{fig:normMax}
        \end{subfigure}
        \caption{Estimation error of covariance matrix under respectively relative spectral, relative Frobenius, spectral and max norms using S-POET (red), POET (black) and sample covariance (blue).}\label{fig:errorNorm}
\end{figure}

Both S-POET and POET are applied to estimate the covariance matrix $\bSigma = \bB\bB' + \bSigma_u$. Their mean estimation errors over $200$ simulations, measured in relative spectral norm $\|\hat\bSigma - \bSigma\|_{\bSigma}$, relative Frobenius norm $\|\hat\bSigma - \bSigma\|_{\bSigma, F}$, spectral norm $\|\hat\bSigma - \bSigma\|$ and max norm $\|\hat\bSigma - \bSigma\|_{\max}$, are reported in Figure \ref{fig:errorNorm}. The errors for sample covariance matrix are also depicted for comparison. First notice that no matter in what norm, S-POET uniformly outperforms POET and sample covariance.  It affirms the claim that shrinkage of spiked eigenvalues is necessary to maintain good performance when the spike is not sufficiently large. Since the low rank part is not shrunk for POET, its error under the spectral norm is comparable and even slightly larger than that of the sample covariance matrix. The error under max norm and relative Frobenius norm as expected decreases as $T$ and $p$ increase. However the relative error under the spectral norm does not converge:  our theory shows it should increase in the order $p/T = \sqrt{T}$.

\subsection{FDP estimation} In this section, we report simulation results on FDP estimation by using both POET and S-POET. The data are simulated in a similar way as in Section 5.2 with $p=1000$ and $n=100$. The first $m=3$ eigenvalues have spike size proportional to $p/\sqrt{n}$ which corresponds to $\alpha = \beta = 2/3$ in Theorem \ref{thm_FDP}. The true FDP is calculated by using $\FDP(t) = V(t)/R(t)$ with $t = 0.01$. The approximate FDP, $\FDP_A(t)$, is calculated as in (\ref{FDP_A}) with known $\bB$ but estimated $\bW$ given by $\hat{\bW} = (\bB\bB')^{-1} \bB'\bZ$.  This
$\FDP_A(t)$ based on a known sample covariance matrix serves as a benchmark for our estimated covariance matrix to compare with. We employ POET and S-POET to get $\widehat{\FDP}_{U, POET}(t)$ and $\widehat{\FDP}_{U, SPOET}(t)$.


\begin{figure}
        \centering
        \begin{subfigure}[b]{0.3\textwidth}
                \includegraphics[width=\textwidth]{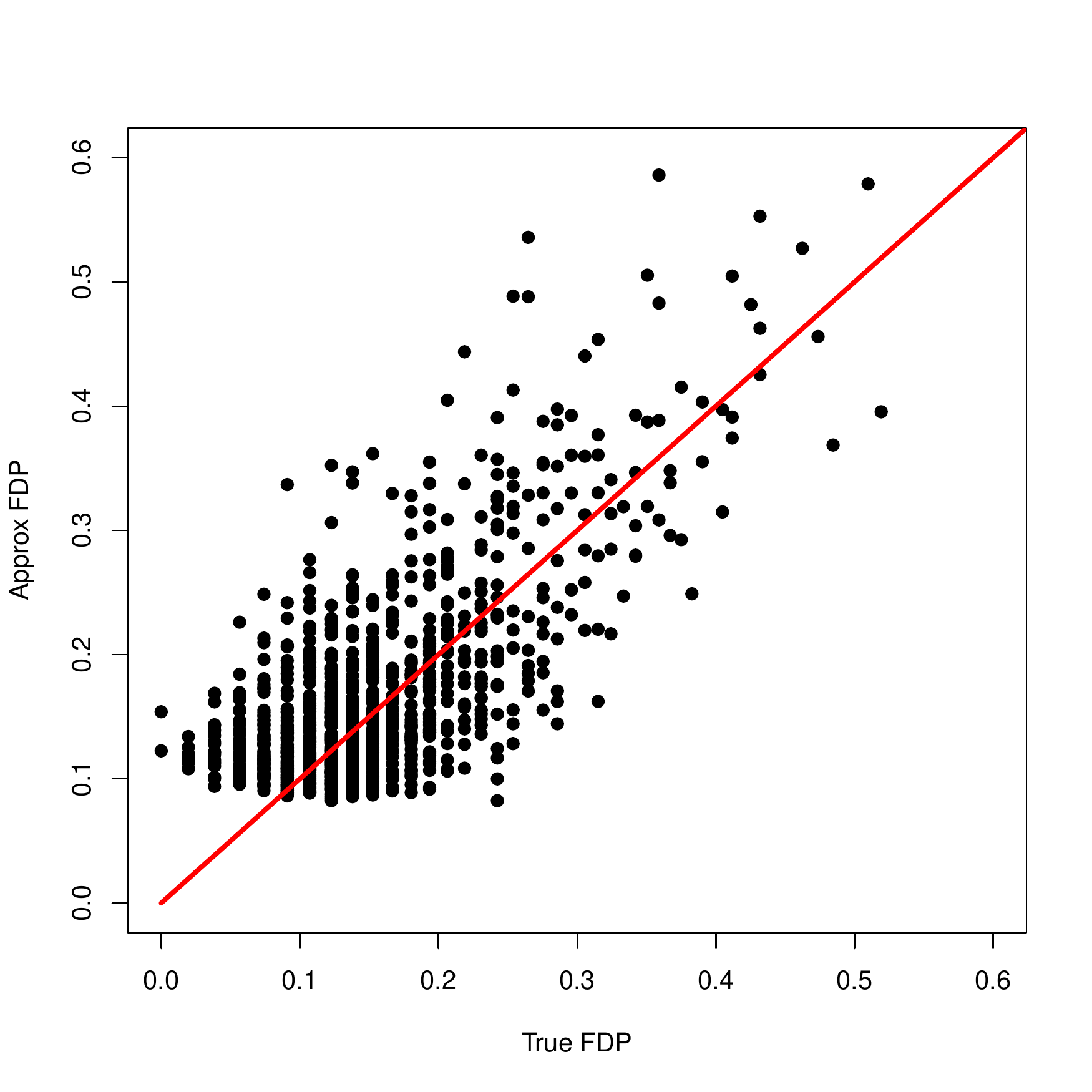}
                \label{fig:approx}
        \end{subfigure}%
        ~
        \begin{subfigure}[b]{0.3\textwidth}
                \includegraphics[width=\textwidth]{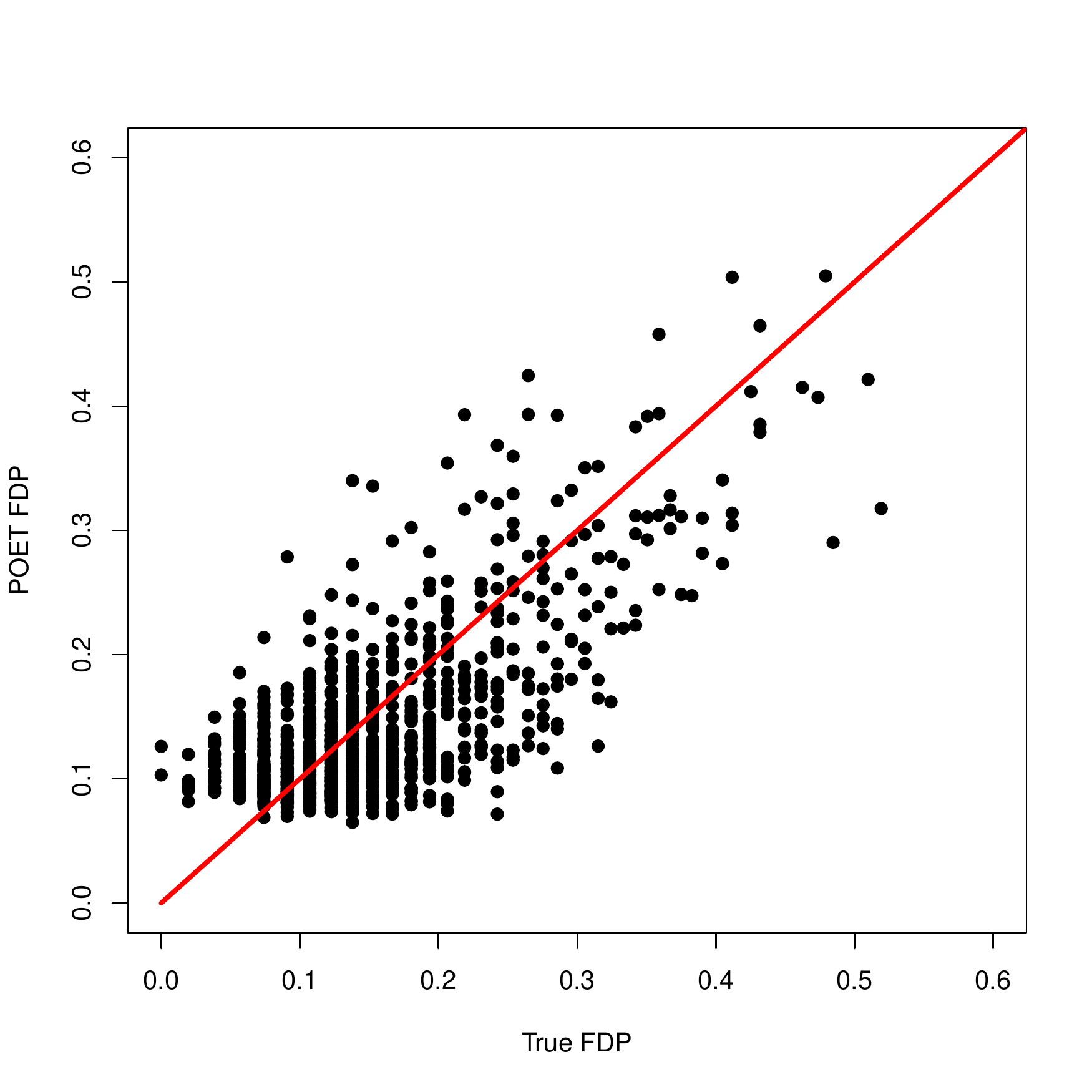}
                \label{fig:POET}
        \end{subfigure}
        ~
        \begin{subfigure}[b]{0.3\textwidth}
                \includegraphics[width=\textwidth]{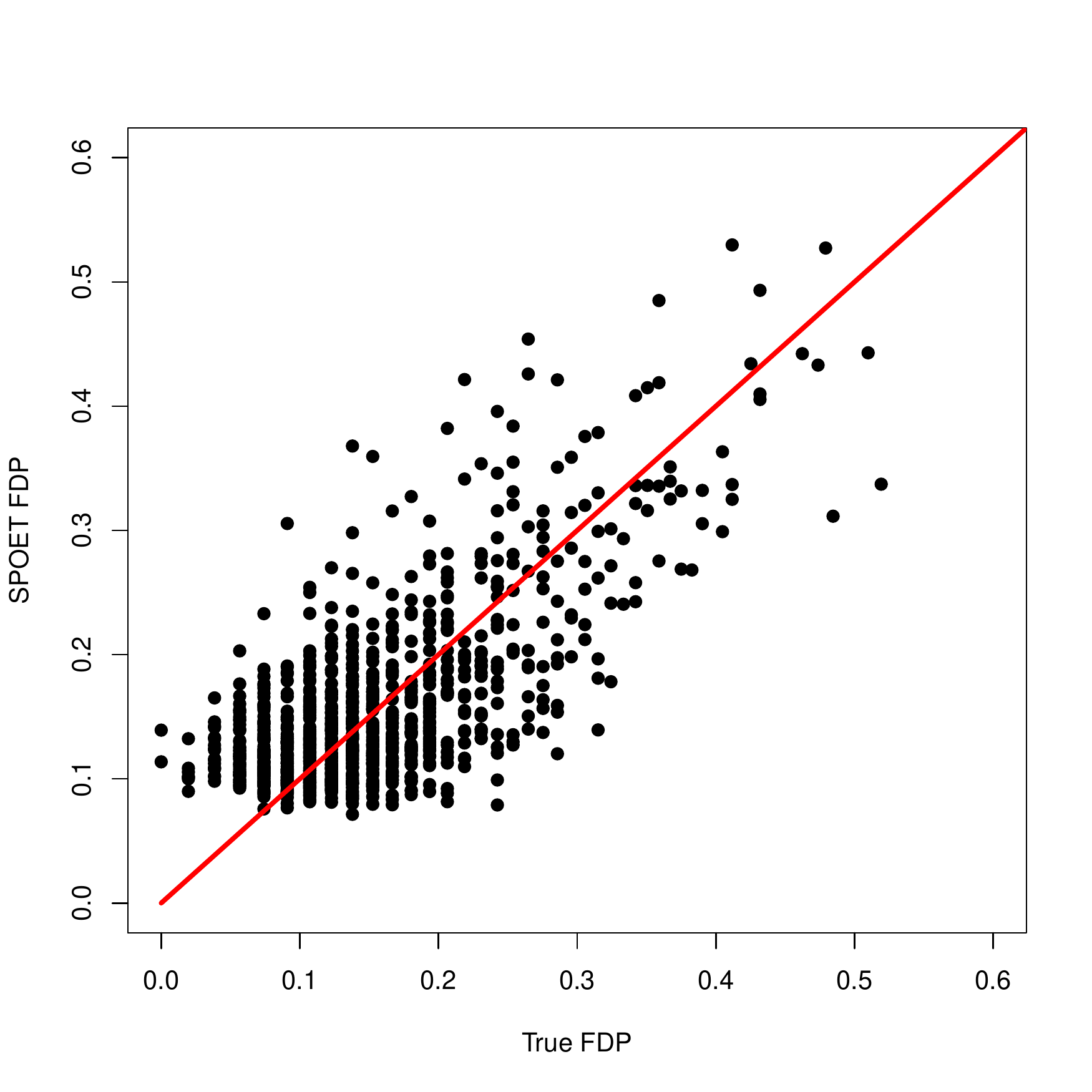}
                \label{fig:SPOET}
        \end{subfigure}
        \caption{Comparison of estimated FDP's with true values. The left plot assumes knowledge of $\bB$, the middle and right ones are corresponding to POET and S-POET methods respectively. The results are aligned along the $45$-degree line, indicating the accuracy of the estimation of FDP. }\label{fig:FDP}
\end{figure}

In Figure \ref{fig:FDP}, three scatter plots are drawn to compare $\FDP_A(t)$, $\widehat{\FDP}_{U, POET}(t)$ and $\widehat{\FDP}_{U, SPOET}(t)$ with the true $\FDP(t)$. The points are basically aligned along the $45$ degree line, meaning that all of them are quite close to the true FDP. With the semi-strong signal $\lambda \propto p/\sqrt{n}$, although much weaker than order $p$, POET accomplishes the task as well as S-POET. 
Both estimators performs as well as if we know the covariance matrix $\bSigma$, the benchmark.  


\section{Proofs for Section~\ref{sec3}} \label{sec6}

\subsection{Proof of Theorem~\ref{thm3.1}}

We first provide three useful lemmas for the proof. Lemma \ref{lem6.1} provides non-asymptotic upper and lower bound for the eigenvalues of weighted Wishart matrix for sub-Gaussian distributions.

\begin{lem} 
\label{lem6.1}
Let $\bA_1, \dots, \bA_n$'s be $n$ independent $p$ dimensional sub-Gaussian random vectors with zero mean and identity variance with the sub-Gaussian norms bounded by a constant $C_0$.  Then for every $t \ge 0$, with probability at least $1-2 \exp(-ct^2)$, one has
\begin{equation*}
\bar w  - \max\{\delta, \delta^2\} \le \lambda_{p}\Big(\frac{1}{n} \sum_{i=1}^n w_i \bA_i \bA_i'\Big)   \le  \lambda_{1}\Big(\frac{1}{n} \sum_{i=1}^n w_i \bA_i \bA_i'\Big)  \le \bar w  + \max\{\delta, \delta^2\} \,.
\end{equation*}
where $\delta = C\sqrt{p/n} + t/\sqrt{n}$ for constants $C,c > 0$, depending on $C_0$. Here $|w_i|$'s is bounded for all $i$ and $\bar w = n^{-1} \sum_{i=1}^n w_i$.
\end{lem}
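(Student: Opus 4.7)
}

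The plan is to adapt the classical covering--net argument for sub-Gaussian sample covariance matrices to the weighted setting. Define
\[
\bM := \frac{1}{n}\sum_{i=1}^n w_i \bA_i \bA_i' - \bar w\, \bI_p = \frac{1}{n}\sum_{i=1}^n w_i \bigl(\bA_i \bA_i' - \bI_p\bigr),
\]
so that $\lambda_j(n^{-1}\sum_i w_i \bA_i \bA_i') - \bar w = \lambda_j(\bM)$ for every $j$. It thus suffices to show $\|\bM\| \le \max\{\delta,\delta^2\}$ with the stated probability. I would use the standard fact that for any $1/4$-net $\mathcal N$ of the unit sphere of $\mathbb R^p$ one has $\|\bM\| \le 2\max_{\bv \in \mathcal N}|\bv'\bM\bv|$, together with the cardinality bound $|\mathcal N|\le 9^p$.

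For a fixed unit vector $\bv$, set $Y_i := (\bv'\bA_i)^2 - 1$. Because $\bv'\bA_i$ is sub-Gaussian with norm uniformly bounded by $C_0$, the variable $Y_i$ is centered (since $\mathbb E[(\bv'\bA_i)^2]=\|\bv\|^2=1$) and sub-exponential, with $\|Y_i\|_{\psi_1}\le C'$ for a constant $C'$ depending only on $C_0$. Since the weights are uniformly bounded, the variables $w_iY_i$ remain sub-exponential with norm still bounded by a constant. Applying Bernstein's inequality for independent centered sub-exponential variables yields, for every $\epsilon > 0$,
\[
\mathbb P\bigl(|\bv'\bM\bv|\ge \epsilon\bigr)=\mathbb P\Bigl(\Bigl|\frac{1}{n}\sum_{i=1}^n w_iY_i\Bigr|\ge \epsilon\Bigr)\le 2\exp\!\bigl(-c_1 n\min\{\epsilon^2,\epsilon\}\bigr),
\]
for some $c_1>0$ depending only on $C_0$ and $\sup_i|w_i|$.

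Next I would take a union bound over $\mathcal N$ to obtain
\[
\mathbb P\bigl(\|\bM\|\ge 2\epsilon\bigr)\le 2\cdot 9^p\exp\!\bigl(-c_1 n\min\{\epsilon^2,\epsilon\}\bigr).
\]
Setting $\epsilon = \tfrac12\max\{\delta,\delta^2\}$ with $\delta = C\sqrt{p/n}+t/\sqrt n$ and taking $C$ large enough (depending on $c_1$), one checks that in either regime ($\delta\le 1$ giving $\min\{\epsilon^2,\epsilon\}=\epsilon^2\asymp \delta^2$, and $\delta>1$ giving $\min\{\epsilon^2,\epsilon\}=\epsilon\asymp \delta^2$) the exponent is at least $c_2(p+t^2)$. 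After absorbing the $\log 9 \cdot p$ from the net cardinality into $c_2 p$, the remaining exponential decay is $-ct^2$, delivering the target probability $1-2\exp(-ct^2)$. Finally, Weyl's inequality converts the operator-norm bound on $\bM$ into the two-sided inequality for every eigenvalue, which is the conclusion.

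The only delicate point is balancing the constants so that the Bernstein exponent $c_1 n \delta^2$ dominates the $(\log 9)p$ entropy term from the net while still leaving room for the $t^2$ tail; this forces the choice of $C$ in the definition of $\delta$. Everything else is a direct weighted transcription of the standard argument, made possible by the boundedness of the $w_i$'s, which ensures that $w_iY_i$ retains a sub-exponential norm comparable to that of $Y_i$.
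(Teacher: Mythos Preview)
Your proposal is correct and follows precisely the route the paper intends: the paper explicitly says the lemma is obtained ``with similar techniques to those developed in \cite{Ver10}'' for the weighted case and omits the details, and the covering-net plus Bernstein-for-sub-exponentials argument you outline is exactly Vershynin's proof of the unweighted case, with the boundedness of the $w_i$'s carrying the sub-exponential norm through. There is no meaningful difference to report.
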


The above lemma is the extension of the classical Davidson-Szarek bound [Theorem II.7 of \cite{DavSza01}] to the weighted sample covariance with sub-Gaussian distribution. It was shown by \cite{Ver10} that the conclusion holds with $w_i = 1$ for all $i$. With similar techniques to those developed in \cite{Ver10}, we can obtain the above lemma for general bounded weights. The details are omitted.

\medskip
Now in order to prove the theorem, let us define two quantities and treat them separately in the following two lemmas.  Let
\[
\bA = n^{-1} \sum_{j=1}^m \lambda_j \bZ_j \bZ_j', \quad \mbox{and} \quad
\bB = n^{-1} \sum_{j=m+1}^p \lambda_j \bZ_j \bZ_j',
\]
where $\bZ_j$ is columns of $\bX \bLambda^{-\frac12}$.
Then,
\beq
 \label{eq3.2}
   \widetilde \bSigma  = \frac{1}{n} \sum_{j=1}^p \lambda_j \bZ_j \bZ_j' = \bA + \bB.
\eeq

\begin{lem}
\label{lem6.2}
Under Assumptions \ref{assump1} - \ref{assump3}, as $n \to \infty$,
\[
    \sqrt{n} \Big(\lambda_j(\bA)/\lambda_j - 1 \Big) \overset{d} \Rightarrow N(0,\kappa_j-1),
     \quad \mbox{for $j = 1, \dots , m$}.
\]
In addition, they are asymptotically independent.
\end{lem}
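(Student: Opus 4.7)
\textbf{Proof plan for Lemma \ref{lem6.2}.}
The idea is to reduce the $n\times n$ rank-$m$ matrix $\bA$ to an $m\times m$ matrix whose nonzero eigenvalues coincide with those of $\bA$, and then combine the multivariate CLT for a fixed number of sample moments with standard matrix perturbation theory.

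Writing $\bA = n^{-1}\bZ_A\bLambda_A\bZ_A'$ with $\bZ_A=(\bZ_1,\ldots,\bZ_m)$ an $n\times m$ matrix and $\bLambda_A=\diag(\lambda_1,\ldots,\lambda_m)$, the nonzero eigenvalues of $\bA$ agree with those of
\[
    \bM \;=\; n^{-1}\bLambda_A^{1/2}\bZ_A'\bZ_A\bLambda_A^{1/2} \;=\; \bLambda_A + \bLambda_A^{1/2}\bDelta\bLambda_A^{1/2},
\]
where $\bDelta = n^{-1}\bZ_A'\bZ_A - \bI_m$. Under Assumption \ref{assump3}, the columns $\bZ_1,\ldots,\bZ_m$ are independent with sub-Gaussian, mean-zero, unit-variance entries, so the classical CLT gives $\sqrt{n}\,\Delta_{jj}\overset{d}\to N(0,\kappa_j-1)$ and $\sqrt{n}\,\Delta_{jk}\overset{d}\to N(0,1)$ for $j\ne k$. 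The diagonal entries $\Delta_{11},\ldots,\Delta_{mm}$ are functions of disjoint columns and hence exactly independent for every $n$, and they are also asymptotically uncorrelated with the off-diagonal entries (since $\mathbb E Z_{ij}=0$ makes $\mathrm{Cov}(Z_{ij}^2,Z_{ij}Z_{ik})=0$).

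Next, I apply a second-order eigenvalue perturbation to $\bM=\bLambda_A+\bE$ with $\bE=\bLambda_A^{1/2}\bDelta\bLambda_A^{1/2}$. Assumption \ref{assump1} forces $\lambda_{j+1}/\lambda_j \le 1-\delta_0$ for $j<m$, so $\lambda_1,\ldots,\lambda_m$ all have the same order and the spectral gap around $\lambda_j$ is of the same order as $\lambda_j$. Combining $\|\bDelta\|=O_P(n^{-1/2})$ (fixed $m$) with $\|\bE\|\le\lambda_1\|\bDelta\|$, the Rayleigh--Schr\"odinger expansion for a simple eigenvalue yields
\[
    \lambda_j(\bM) \;=\; \lambda_j + E_{jj} + \sum_{k\ne j}\frac{E_{jk}^{\,2}}{\lambda_j-\lambda_k} + R_j,
\]
with remainder $|R_j|\le C\|\bE\|^3/\min_{k\ne j}(\lambda_j-\lambda_k)^2 = O_P(\lambda_j/n^{3/2})$. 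Since $E_{jj}=\lambda_j\Delta_{jj}$ and $E_{jk}^2/(\lambda_j-\lambda_k)=\lambda_j\lambda_k\Delta_{jk}^2/(\lambda_j-\lambda_k)=O_P(\lambda_j/n)$ (using that $\lambda_k/|\lambda_j-\lambda_k|$ is bounded by Assumption~\ref{assump1}), dividing by $\lambda_j$ gives
\[
    \frac{\lambda_j(\bA)}{\lambda_j}-1 \;=\; \Delta_{jj}+O_P(n^{-1}).
\]

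Multiplying by $\sqrt{n}$ and invoking the CLT for $\sqrt{n}\,\Delta_{jj}=\sqrt{n}(n^{-1}\sum_i Z_{ij}^2-1)$ delivers the marginal convergence to $N(0,\kappa_j-1)$, and the joint asymptotic independence across $j=1,\ldots,m$ follows because the $\Delta_{jj}$'s are exactly independent. The step that needs the most care is the quantitative control of the perturbation remainder $R_j$: although $\|\bE\|$ may diverge with $\lambda_1$, the spectral gap diverges at the same rate by Assumption~\ref{assump1}, so the standard second-order bound still produces a remainder that is $o_P(\lambda_j/\sqrt{n})$ after normalization.
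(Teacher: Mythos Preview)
Your overall strategy---reduce to the $m\times m$ matrix $\bM=\bLambda_A^{1/2}(n^{-1}\bZ_A'\bZ_A)\bLambda_A^{1/2}$, show that $\lambda_j(\bM)/\lambda_j-1=\Delta_{jj}+o_P(n^{-1/2})$, and apply the CLT to the independent diagonal entries $\Delta_{jj}$---is exactly the route the paper takes (there phrased through Anderson's 1963 expansion). The CLT step and the independence argument are fine.

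The gap is in the perturbation remainder. Your sentence ``Assumption~\ref{assump1} forces $\lambda_{j+1}/\lambda_j\le 1-\delta_0$ \ldots\ so $\lambda_1,\ldots,\lambda_m$ all have the same order'' is not correct: the assumption gives only an \emph{upper} bound on consecutive ratios, not a lower bound, and the paper explicitly allows the spiked eigenvalues to diverge at different rates. Once $\lambda_1/\lambda_j$ is unbounded, the crude Rayleigh--Schr\"odinger remainder estimate $|R_j|\le C\|\bE\|^3/\min_{k\ne j}(\lambda_j-\lambda_k)^2$ no longer yields $o_P(\lambda_j n^{-1/2})$. For instance, with $m=2$, $\lambda_1=n^2$ and $\lambda_2$ bounded, one has $\|\bE\|\asymp \lambda_1 n^{-1/2}=n^{3/2}$ and gap $\asymp n^2$, so the bound gives $R_2=O_P(n^{1/2})$, which swamps the signal---even though a direct $2\times2$ computation shows $\lambda_2(\bM)/\lambda_2-1=\Delta_{22}+O_P(n^{-1})$ as desired.

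The paper avoids this by controlling the perturbation \emph{entrywise} rather than through $\|\bE\|$: in Anderson's notation it shows $F_{jk}=O_P\!\big(\sqrt{\lambda_j\lambda_k}/|\lambda_j-\lambda_k|\big)$, and Assumption~\ref{assump1} makes $\sqrt{\lambda_j\lambda_k}/|\lambda_j-\lambda_k|\le 1/\delta_0$ for every pair $j\ne k$, regardless of relative magnitudes. Your argument can be repaired the same way: note that $E_{jk}/(\lambda_j-\lambda_k)=\big(\sqrt{\lambda_j\lambda_k}/(\lambda_j-\lambda_k)\big)\Delta_{jk}=O_P(n^{-1/2})$ uniformly in $j,k$, and run the resolvent (or Schur-complement) expansion using this entrywise control instead of the spectral-norm bound on $\bE$. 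With that refinement, the rest of your proof goes through unchanged.
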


\begin{proof}
Note that $\lambda_k^{-1} \bA = n^{-1} \sum_{j=1}^m (\lambda_j/\lambda_k) \bZ_j \bZ_j'$ has the same eigenvalues as matrix $\lambda_k^{-1} \widetilde \bA = n^{-1} \bar \bZ' \bar \bZ$, where $\bar \bZ$ is an $n \times m$ matrix with i.i.d. rows, which are sub-Gaussian distributed with mean ${\bf 0}$ and variance $\bLambda/\lambda_k$. $\lambda_k$ here is only used for normalization. Therefore, we are in the low dimensional situation as Theorem 1 of \cite{And63}. The differences here are two-fold: on one hand we encounter sub-Gaussian distribution; on the other hand the eigenvalues could diverge with different rates of convergence. The result of \cite{And63} can be extended in both directions.

Extension from Gaussian to sub-Gaussian is trivial. The only difference is that the kurtosis of Gaussian is replaced by that of sub-Gaussian distribution. Extension to diverging eigenvalues requires careful scrutiny of Anderson's original proof. Detailedly, following the notations of \cite{And63} and (2.22) therein, we have
\[
H_j := \sqrt{n} (\lambda_j(\widetilde \bA) - \lambda_j) = U_{jj} - n^{-1/2} (M_{jj} - \lambda_j W_{jj}) \,,
\]
where $U_{jj} = \sqrt{n} (\widetilde A_{jj} - \lambda_j)$,  $M_{jj} = \sum_{k \ne j} F_{jk}^2 (\lambda_k + n^{-\frac12} H_k)$, $W_{jj} = \sum_{k \ne j} F_{jk}^2$ and $F_{jk}$ is the $j^{th}$ element of the $k^{th}$ eigenvector of $\widetilde A$ multiplied by $\sqrt{n}$ for $j \ne k$. We claim $M_{jj}/\lambda_{j}$ and $W_{jj}$ are bounded with high probability, so $H_j/\lambda_j$ and $U_{jj}/\lambda_j$ share the same limiting distribution. Therefore the limiting distribution of $\lambda_j(\widetilde \bA)$ for $j = 1,\dots, m$ are independent and
\[
    \sqrt{n} (\lambda_j(\bA) / \lambda_j - 1)
    = \sqrt{n} ( \lambda_j(\widetilde \bA) / \lambda_j - 1)
    \overset{d} \Rightarrow N(0,(\kappa_j-1)\lambda_j^2/ \lambda_j^2).
\]
So the lemma follows.

It remains to show $M_{jj}/\lambda_{j}$ and $W_{jj}$ are $O_P(1)$. Following the cofactor expansion argument of Section 7 in \cite{And63}, it is not hard to see $F_{jk} = O_P(\sqrt{\lambda_j \lambda_k/(\lambda_j - \lambda_k)^2})$. By assumption, $|\lambda_j - \lambda_k| > \delta_0 \max\{\lambda_j, \lambda_k\}$. Hence $F_{jk} = O_P(1/\delta_0) = O_P(1)$ and so is $W_{jj}$. In addition, $M_{jj}/\lambda_j = O_P(\sum_{k \ne j} \lambda_k^2/(\lambda_j - \lambda_k)^2) = O_P(m/\delta_0^2) = O_P(1)$. Now the derivation is complete.
\end{proof}

\begin{lem}
\label{lem6.3}
Under Assumptions \ref{assump1} - \ref{assump3}, for $j = 1, \cdots, m$, we have
\[
    \lambda_k(\bB)/\lambda_j = \bar c c_j + O_P\Big(\lambda_j^{-1} \sqrt{p/n }\Big) + o_P(n^{-\frac 12}), \quad \mbox{for $k =1,2,\dots,n$}.
\]
\end{lem}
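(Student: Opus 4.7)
The plan is to leverage the row-versus-column swap that underpins the whole paper: the $n\times n$ matrix $\bB = n^{-1}\sum_{j=m+1}^{p} \lambda_j \bZ_j \bZ_j'$ is a weighted Gram matrix built from $p-m$ independent $n$-dimensional sub-Gaussian vectors $\bZ_{m+1},\ldots,\bZ_p$ with identity covariance, whose weights $\lambda_j\in[c_0,C_0]$ are bounded by Assumption~\ref{assump1}. After rescaling by $(p-m)/n$ it becomes precisely a weighted sample covariance in the regime of Lemma~\ref{lem6.1}, with $n$ playing the role of the (small) ambient dimension and $p-m$ playing the role of the (large) sample size.

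First I would write $\bB = \tfrac{p-m}{n}\,\bar\bB$, where
\[
\bar\bB := \frac{1}{p-m}\sum_{j=m+1}^{p} \lambda_j \bZ_j \bZ_j'.
\]
Applying Lemma~\ref{lem6.1} to $\bar\bB$ with ambient dimension $n$, sample size $p-m$, and bounded weights $w_j = \lambda_j$, and choosing $t$ of constant order, gives uniformly over $k=1,\ldots,n$
\[
|\lambda_k(\bar\bB) - \bar w| \;\le\; \max\{\delta,\delta^2\},\qquad \delta = C\sqrt{\tfrac{n}{p-m}}+\tfrac{t}{\sqrt{p-m}},
\]
with probability at least $1-2e^{-ct^2}$. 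Because $p>n$ we have $\delta\to 0$, so $\max\{\delta,\delta^2\}=\delta$ and thus $\lambda_k(\bar\bB)=\bar w+O_P(\sqrt{n/p})$. Assumption~\ref{assump2} gives $\bar w = (p-m)^{-1}\sum_{j>m}\lambda_j = \bar c + o(n^{-1/2})$, so multiplying back by $(p-m)/n$ yields
\[
\lambda_k(\bB) \;=\; \tfrac{p-m}{n}\bar c \;+\; O_P\bigl(\sqrt{p/n}\bigr) \;+\; o\bigl(\tfrac{p}{n^{3/2}}\bigr).
\]

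The remaining step is accounting. Dividing by $\lambda_j = p/(nc_j)$ with $c_j$ bounded, the main term becomes $\bar c c_j \cdot (p-m)/p = \bar c c_j + O(1/p)$; the stochastic fluctuation becomes $O_P(\lambda_j^{-1}\sqrt{p/n})$ which matches the claim; and the deterministic remainder $o(p/n^{3/2})/\lambda_j = o(c_j/\sqrt{n}) = o(n^{-1/2})$. Combining these three contributions gives exactly
\[
\lambda_k(\bB)/\lambda_j \;=\; \bar c c_j \;+\; O_P(\lambda_j^{-1}\sqrt{p/n}) \;+\; o_P(n^{-1/2}),
\]
uniformly in $k$, as required.

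No serious obstacle arises. The main thing to get right is the bookkeeping of which quantity is small in which regime: the $(p-m)/n$ versus $p/n$ discrepancy is harmless because $m$ is fixed, and the deterministic $o(n^{-1/2})$ slack in Assumption~\ref{assump2} feeds cleanly through. The only conceptual step is the willingness to transpose the data matrix and apply the weighted sub-Gaussian concentration of Lemma~\ref{lem6.1} in the ``short and fat'' direction, which is precisely the technical device the paper advertises.
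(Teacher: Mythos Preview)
Your proposal is correct and follows essentially the same approach as the paper: rescale $\bB$ by $(p-m)/n$, apply Lemma~\ref{lem6.1} with the roles of dimension and sample size swapped, invoke Assumption~\ref{assump2} to replace $\bar w$ by $\bar c + o(n^{-1/2})$, and then divide by $\lambda_j$. The only cosmetic difference is that the paper takes $t=\sqrt{n}$ in Lemma~\ref{lem6.1} rather than a constant, but this yields the same $O_P(\sqrt{n/p})$ bound.
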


\begin{proof}
By definition of $\bB$, $\bB = n^{-1} \bZ_B \bLambda_B \bZ_B'$ where $\bZ_B$ is $n \times (p-m)$ random matrix with independent sub-Gaussian entries of zero mean and unit variance and $\bLambda_B$ is the diagonal matrix with entries $\lambda_{m+1}, \cdots, \lambda_p$. By Lemma \ref{lem6.1} with $t = \sqrt{n}$, for any $k \le n$,
\[
\frac{n}{p-m} \lambda_k(\bB) = \frac{1}{p-m}\sum_{j=m+1}^p \lambda_j + O_P\Big(\sqrt{\frac{n}{p}}\Big) = \bar c + O_P\Big(\sqrt{\frac{n}{p}} \Big) + o_P(n^{-1/2})\,.
\]
Therefore,
\begin{equation*}
\begin{aligned}
\frac{\lambda_k(\bB)}{\lambda_j} & = \frac{n \lambda_k(\bB)}{p-m} \frac{p-m}{n \lambda_j}
& = \bar c c_j + O_P\Big(\lambda_j^{-1} \sqrt{\frac{p}{n}}\Big) + o_P(c_j n^{-\frac 12})\,.
\end{aligned}
\end{equation*}

\end{proof}

\begin{proof}[Proof of Theorem~\ref{thm3.1}]


By Wely's Theorem, $\lambda_j(\bA) + \lambda_n(\bB) \le \hat\lambda_j \le \lambda_j(\bA) + \lambda_1(\bB).$
Therefore from Lemma \ref{lem6.3},
\[
\frac{\hat \lambda_j}{\lambda_j} = \frac{\lambda_j(\bA)}{\lambda_j} + \bar c c_j + O_P\Big(\lambda_j^{-1} \sqrt{\frac{p}{n}}\Big) + o_P(c_j n^{-1/2})\,,
\]
By Lemma \ref{lem6.2} and Slutsky's theorem, we conclude that $\sqrt{n} \Big( \hat \lambda_j/\lambda_j - \Big(1+ \bar c c_j + O_P(\lambda_j^{-1} \sqrt{p/n}) \Big) \Big)$ converges in distribution to $N(0,\kappa_j-1)$ and the limiting distributions of the first $m$ eigenvalues are independent.
\end{proof}

\subsection{Proofs of Theorem~\ref{thm3.2}}
The proof of Theorem~\ref{thm3.2} is mathematically involved. The basic idea for proving part (i) is outlined in Section \ref{sec2}. We relegate less important technical lemmas to the end of the proof in order not to distract the readers. The proof of part (ii) utilizes the invariance of standard Gaussian distribution under orthogonal transformations.

\begin{proof}[Proof of Theorem~\ref{thm3.2}]
(i) Let us start by proving the asymptotic normality of $\hat\bxi_{jA}$ for the case $m > 1$. Write
\[
{\bX} = (\bZ_A \bLambda_A^{\frac{1}{2}}, \bZ_B \bLambda_B^{\frac{1}{2}}) = (\sqrt{\lambda_1} \bZ_1, \dots, \sqrt{\lambda_m} \bZ_m, \sqrt{\lambda_{m+1}}  \bZ_{m+1}, \dots, \sqrt{\lambda_{p}} \bZ_p)\,,
\]
where each $\bZ_j$ follows a sub-Gaussian distribution with mean $\bf 0$ and identity variance $\bI_n$. Then by the eigenvalue relationship of equation (\ref{VecRel}), we have
\beq
\label{eqn4.2}
\hat\bxi_{jA} = \frac{\bLambda_A^{\frac12} \bZ_A' \bu_j}{\sqrt{n \hat\lambda_j}} \;\; \text{and} \;\; \bu_j = \frac{{\bX} \hat\bxi_j}{\sqrt{n \hat\lambda_j}} = \frac{\bZ_A \bLambda_A^{\frac12} \hat\bxi_{jA}}{\sqrt{n\hat\lambda_j}} + \frac{\bZ_B \bLambda_B^{\frac12} \hat\bxi_{jB}}{\sqrt{n \hat \lambda_j}}\,.
\eeq
Recall $\bu_j$ is the eigenvector of the matrix $\tilde\bSigma$, that is, $\frac{1}{n} {\bX}{\bX}' \bu_j = \hat \lambda_j \bu_j$. Using $\bX = (\bZ_A \bLambda_A^{\frac{1}{2}}, \bZ_B \bLambda_B^{\frac{1}{2}})$, we obtain
\begin{equation}
\label{eqn4.3}
\Big( \bI_n - \frac{1}{n} \bZ_A \frac{\bLambda_A}{\lambda_j} \bZ_A' \Big) \bu_j = \bD \bu_j - \Delta \bu_j\,,
\end{equation}
where we denote $\bD = (n\lambda_j)^{-1} \bZ_B\bLambda_B \bZ_B' - \bar c c_j \bI_n$, $\Delta = \hat \lambda_j/\lambda_j - (1+\bar c c_j)$.
We then left-multiply equation (\ref{eqn4.3}) by $\bLambda_A^{\frac12} \bZ_A'/\sqrt{n \hat\lambda_j}$ and employ relationship (\ref{eqn4.2}) to replace $\bu_j$ by $\hat \bxi_{jA}$ and $\hat\bxi_{jB}$ as follows:
\begin{equation}
\label{eqn4.4}
\begin{aligned}
\Big(\bI_m - \frac{\bLambda_A}{\lambda_j} \Big) \hat\bxi_{jA} = & \frac{\bLambda_A^{\frac12} (\frac 1 n \bZ_A' \bZ_A - \bI_m ) \bLambda_A^{\frac12} }{\lambda_j} \hat\bxi_{jA} + \frac{\bLambda_A^{\frac12} \bZ_A' \bD \bZ_A \bLambda_A^{\frac12}}{n \hat\lambda_j} \hat\bxi_{jA} \\ & + \frac{\bLambda_A^{\frac12} \bZ_A' \bD \bZ_B \bLambda_B^{\frac12} }{n \hat\lambda_j} \hat\bxi_{jB} - \Delta \hat\bxi_{jA}\,.
\end{aligned}
\end{equation}
Further define
\[
\bR = \sum_{k \in [m] \setminus j} \frac{\lambda_j}{\lambda_j - \lambda_k} \be_{kA} \be_{kA}'\,.
\]
Then we have $\bR (\bI - \bLambda_A/\lambda_j ) = \bI_m - \be_{jA} \be_{jA}'$. Note that $\bR$ is only well defined if $m > 1$. Therefore, by left multiplying $\bR$ to equation (\ref{eqn4.4}),
\begin{equation}
\label{eqn4.5}
\begin{aligned}
\hat\bxi_{jA}- \langle \hat\bxi_{jA}, \be_{jA} \rangle \be_{jA} = & \bR \Big(\frac{\bLambda_A}{\lambda_j}\Big)^{\frac12} \bK \Big(\frac{\bLambda_A}{\lambda_j}\Big)^{\frac12} \hat\bxi_{jA} \\
& + \bR \frac{\bLambda_A^{\frac12} \bZ_A' \bD \bZ_B \bLambda_B^{\frac12} }{n \hat\lambda_j} \hat\bxi_{jB} - \Delta \bR \hat\bxi_{jA}\,,
\end{aligned}
\end{equation}
where $\bK = n^{-1} \bZ_A' \bZ_A - \bI_n + \lambda_j (n\hat \lambda_j)^{-1} \bZ_A' \bD \bZ_A $. Dividing both side by $\|\hat\bxi_{jA}\|$, we are able to write
\begin{equation}
\label{mainExpression_u}
\frac{\hat\bxi_{jA}}{\|\hat\bxi_{jA}\|} - \be_{jA} = \bR \Big(\frac{\bLambda_A}{\lambda_j}\Big)^{\frac12} \bK \Big(\frac{\bLambda_A}{\lambda_j}\Big)^{\frac12} \be_{jA} + \br_n\,,
\end{equation}
where
\beq
 \label{mainExpression_r}
  \begin{aligned}
\br_n =  & \Big(\langle \frac{\hat\bxi_{jA}}{\|\hat\bxi_{jA}\|}, \be_{jA} \rangle - 1 \Big) \be_{jA} + \bR \Big(\frac{\bLambda_A}{\lambda_j}\Big)^{\frac12} \bK \Big(\frac{\bLambda_A}{\lambda_j}\Big)^{\frac12} \Big( \frac{\hat\bxi_{jA}}{\|\hat\bxi_{jA}\|} - \be_{jA} \Big)  \\
& + \bR \frac{\bLambda_A^{\frac12} \bZ_A' \bD \bZ_B \bLambda_B^{\frac12}}{n \hat\lambda_j} \frac{\hat\bxi_{jB}}{\|\hat\bxi_{jA}\|} - \Delta \bR \Big( \frac{\hat\bxi_{jA}}{\|\hat\bxi_{jA}\|} - \be_{jA} \Big)\,.
  \end{aligned}
\eeq

We will show in Lemma \ref{lem6.4} below that $\br_n$ is a smaller order term.  By Lemma \ref{lem6.4}, noticing that $(\bLambda_A/\lambda_j)^{\frac12} \be_{jA} = \be_{jA}$,
\beq \label{eq6.8}
\sqrt{n} \Big(\frac{\hat\bxi_{jA}}{\|\hat\bxi_{jA}\|} - \be_{jA} + O_P\Big(\sqrt{\frac{p}{n\lambda_j^2}}\Big)  \Big) = \sqrt{n} \bR \Big(\frac{\Lambda_A}{\lambda_j}\Big)^{\frac12} \bK \be_{jA} + o_P(1)\,.
\eeq
Now let us derive normality of the right hand side of (\ref{eq6.8}). According to definition of $\bR$,
\begin{equation}
\label{eqn_R}
\bR \Big(\frac{\bLambda_A}{\lambda_j}\Big)^{\frac12} = \sum \limits_{k \in [m] \setminus j} \frac{\sqrt{\lambda_j \lambda_k}}{\lambda_j - \lambda_k} \be_{kA} \be_{kA}'  \to \sum \limits_{k \in [m] \setminus j} a_{jk} \be_{kA} \be_{kA}'\,.
\end{equation}
Let $\bW = \sqrt{n} \bK \be_{jA} = (W_1,\dots,W_m)$ and $\bW^{(-j)}$ be the $m-1$ dimension vector without the $j^{th}$ element in $\bW$. Since the $j^{th}$ diagonal element of $\bR$ is zero, $\bR (\bLambda_A/\lambda_j)^{\frac12} \bW$ depends only on $\bW^{(-j)}$. Therefore, by Lemma \ref{lem6.5} below and Slutsky's theorem,
\[
\sqrt{n} \bR \Big(\frac{\bLambda_A}{\lambda_j}\Big)^{\frac12} \bK \be_{jA} + O_P\Big(\sqrt{\frac{p}{n\lambda_j^2}}\Big)  \overset{d} \Rightarrow N_m\Big({\bf 0}, \sum\limits_{k \in [m] \setminus j} a_{jk}^2 \be_{kA} \be_{kA}'\Big)\,.
\]
Together with (\ref{eq6.8}), we concludes (\ref{eeq3.3}) for the case $m > 1$.

\medskip
Now let us turn to the case of $m=1$. Since $\bR$ is not defined for $m = 1$, we need to find a different derivation. Equivalently, (\ref{eqn4.3}) can be written as
\[
\frac 1n \bZ_1 \bZ_1' \bu_1 + \frac{1}{n\lambda_1} \bZ_B \bLambda_B \bZ_B' \bu_1 = \frac{\hat\lambda_1}{\lambda_1} \bu_1\,.
\]
Left-multiplying $\bu_1'$ and using relationship (\ref{eqn4.2}), we obtain easily
\[
\hat\xi_{1A}^2 = 1 - \frac{\bar c c_1}{\hat\lambda_1/\lambda_1} - \frac{\lambda_1}{\hat\lambda_1} \bu_1' \bD\bu_1 = 1 - \frac{\bar c c_1}{\hat\lambda_1/\lambda_1} + O_P(\lambda_1^{-1} \sqrt{p/n})\,,
\]
where $\bD$ is defined as before and $\| \bD \| = O_P(\lambda_1^{-1} \sqrt{p/n})$ according to Lemma \ref{lem6.4}. Expanding $\sqrt{1-\bar c c_1/x}$ at the point of $(1+\bar c c_1)$, we have
\[
\hat\xi_{1A} = \frac{1}{\sqrt{1+\bar c c_1}} + \frac{\bar c c_1}{2 (1+\bar c c_1)^{3/2}} \Big(\hat\lambda_1/\lambda_1 - (1+\bar c c_1)\Big) + O_P\Big(\sqrt{\frac{p}{n \lambda_1^2}} + c_1 n^{-1}\Big)\,.
\]
Note that from Lemmas \ref{lem6.2} and \ref{lem6.3}, $\hat\lambda_1/\lambda_1 - (1+\bar c c_1) = (\|\bZ_1\|^2/n - 1) + O_P(\lambda_1^{-1} (p/n)^{1/2}) + o_P(c_j n^{-1/2})$. Therefore due to the fact $\sqrt{n}(\|\bZ_1\|^2/n - 1)$ is asymptotically $N(0, \kappa_1 - 1)$, we conclude
\[
\frac{2(1+\bar c c_1)^{3/2}}{\bar c c_1} \sqrt{n} \Big(\hat\xi_{1A} - \frac{1}{\sqrt{1+\bar c c_1}} + O_P\Big(\sqrt{\frac{p}{n\lambda_1^2}}\Big) \Big) \overset{d} \Rightarrow N(0, \kappa_1 - 1)\,.
\]
This completes the first part of the proof.

\medskip

(ii) We now prove the conclusion for non-spiked part $\hat\bxi_{jB}$. Recall that $\bX_i$ follows $N({\bf 0}, \bLambda)$. Consider $\bX_i^{R} =  \diag(\bI_m, \bD_0) \bX_i$ where as defined in the theorem $\bD_0 = \diag(\sqrt{\bar c/ \lambda_{m+1}}, \dots, \sqrt{\bar c/\lambda_p})$. Here the index $R$ means rescaled data by $\diag(\bI_m, \bD_0)$. After rescaling, we have $\bX_i^{R} \sim N({\bf 0},  \diag(\bLambda_A, \bar c\bI_{p-m}))$. Correspondingly, the $n\times p$ data matrix $\bX^{R} = \bX \diag(\bI_m, \bD_0) = (\bX_A, \bX_B\bD_0)$ where $\bX_A = \bZ_A \bLambda_A^{\frac{1}{2}}$ and $\bX_B = \bZ_B \bLambda_B^{\frac{1}{2}}$ as the notations before. Assume $\hat \bxi_j^{R}$ and $\bu_j^{R}$ are eigenvectors given by $\hat\bSigma^{R}$ and $\widetilde\bSigma^{R}$ of the rescaled data $\bX^{R}$ and $\hat\bxi_j^{R} = ({\hat \bxi_{jA}^{R}}, {\hat\bxi_{jB}^{R}})'$. It has been proved by \cite{Pau07} that $\bh_0 := \hat\bxi_{jB}^{R}/\|\hat\bxi_{jB}^{R}\|$ is distributed uniformly over the unit sphere and is independent of $\|\hat\bxi_{jB}^{R}\|$ due to the orthogonal invariance of the non-spiked part of $\hat\bxi_{jB}^{R}$. Hence it only remains to link $\hat\bxi_{jB}/\|\hat\bxi_{jB}\|$ with $\bh_0$.

Note that $\widetilde\bSigma = n^{-1} \bX \bX'$ and $\tilde \bSigma^{R} = n^{-1} \bX^{R}{{\bX^{R}}'}$, so
\[
\|\widetilde\bSigma - \widetilde \bSigma^{R}\| = \Big\|\frac{1}{n} \bX_B (\bI - \bD_0^2) \bX_B' \Big\| = \Big\|\frac{1}{n} \sum_{j =m+1}^{p} (\lambda_j - \bar c) \bZ_j \bZ_j \Big\|\,,
\]
where the last term is of order $O_P(\sqrt{p/n})$ by Lemma \ref{lem6.1}. Thus by the $\sin \theta$ theorem of \cite{DavKah70}, $\| \bu_j - \bu_j^{R}\| = O_P(\lambda_j^{-1} \sqrt{p/n})$. Next we convert from $\bu_j$ to $\hat\bxi_{jB}$ using the basic relationship (\ref{VecRel}). We have,
\begin{align*}
\Big\| \bD_0 & \frac{\hat\bxi_{jB}}{\|\hat\bxi_{jB}\|} - \frac{\hat\bxi_{jB}^{R}}{\|\hat\bxi_{jB}^{R}\|} \Big\| = \Big\|\frac{\bD_0 \bX_B' \bu_j}{\sqrt{n\hat\lambda_j}\|\hat\bxi_{jB}\|} - \frac{\bD_0{{\bX_B}'} \bu_j^{R}}{\sqrt{n\hat\lambda_j^{R}}\|\hat\bxi_{jB}^{R}\|} \Big\| \\
& \le \Big\| \frac{\bD_0 \bX_B' \bu_j}{\sqrt{n\lambda_j}} \Big\| \left| \sqrt{\frac{\lambda_j}{\hat\lambda_j \|\hat\bxi_{jB}\|^2}} - \sqrt{\frac{\lambda_j}{\hat\lambda_j^R \|\hat\bxi_{jB}^R\|^2}} \right| +
 \Big\|\frac{ \bD_0 \bX_B' }{\sqrt{n\hat\lambda_j^R} \|\hat\bxi_{jB}^R\|} \Big\| \| \bu_j - \bu_j^{R}\| \\
 & =: I + II\,.
\end{align*}
First we claim $II = O_P(\lambda_j^{-1}\sqrt{p/n})$ since $\| \bu_j - \bu_j^{R}\| = O_P(\lambda_j^{-1}\sqrt{p/n})$, $\|\bX_B'/\sqrt{n\lambda_j}\| = O_P(\sqrt{c_j})$, $\lambda_j /\hat\lambda_j^R = O_P(1)$ and $1/\|\hat\bxi_{jB}^R\| = O_P(1/\sqrt{c_j})$ according to Lemma \ref{lem6.6}. Now we show $I = O_P(\sqrt{n/p}) + o_P(n^{-1/2})$. From the proof of Lemma \ref{lem6.6}, we have
\[
\hat\lambda_j \|\hat\bxi_{jB}\|^2/\lambda_j = \bar c c_j + O_P(\lambda_j^{-1} \sqrt{p/n}) + o_P(c_j n^{-1/2}).
\]
Then some elementary calculation gives the rate of $I$. Therefore, $\| \bD_0 \hat\bxi_{jB}/\|\hat\bxi_{jB}\| - \bh_0\| = O_P(\sqrt{n/p}) + o_P(n^{-1/2})$.
The conclusion (\ref{eeq3.5}) follows.

\medskip
To prove the max norm bound (\ref{eeq3.6}) of $\| \hat\bxi_{jB} \|_{\max}$, we first show $\|\bh_0\|_{\max} = O_P(\sqrt{\log p/p})$. Recall that $\bh_0$  is uniformly distributed on unit sphere of dimension $p-m$. This follows easily from its normal representation.
Let $\bG$ to be $p-m$ dimensional multivariate standard normal distributed, then $\bh_0 \overset{d} = \bG/\|\bG\|$.  It then follows
$$
    \|\bh_0\|_{\max} =  \max_{i \le p-m} |G_i| /\|\bG\| = O_P( \sqrt{\log p/p}).
$$

From the derivation above,
\[
\|\hat\bxi_{jB} \|_{\max} \le \left| \sqrt{\frac{\hat\lambda_j^R \|\hat\bxi_{jB}^R\|^2} {\hat\lambda_j \|\hat\bxi_{jB}\|^2}} \right| \|\bD_0^{-1}\| \|\hat\bxi_{jB} \| \Big(II + \|\bh_0\|_{\max}\Big) \,,
\]
which gives $O_P(\sqrt{c_j} (\sqrt{p/(n\lambda_j^2)} + \sqrt{\log p/p})) = O_P(p/(n \lambda_j^{3/2}) + \sqrt{\log p/(n\lambda_j)})$, given the fact that $\|\hat\bxi_{jB}^{R}\| = O_P(\sqrt{c_j})$ by Lemma \ref{lem6.6}. Thus we are done with the second part of the proof.

\medskip
(iii) The proof for the convergence of $\|\hat\bxi_{jA} \|$ and $\|\hat\bxi_{jB}\|$ are given in Lemma \ref{lem6.6}. If $m = 1$, the result for $\|\hat\bxi_{jA} \|$ directly gives (\ref{eeq3.4}) with the same rate. For $m > 1$, from \ref{lem6.6} we have
$$
\|\hat\bxi_{jA}\|^2 = (1+\bar c c_j)^{-1} + O_P\Big(\sqrt{p/(n\lambda_j^2)} + c_j n^{-1/2}\Big) \,.
$$
On the other hand, from Theorem \ref{thm3.2} (i), $\hat\xi_{jk}^2 = O_P(p/(n\lambda_j^2) + 1/n)$ for $k \le m$ and $k \ne j$. So $\hat\xi_{j1}^2 = (1+\bar c c_j)^{-1} + O_P(\sqrt{p/(n\lambda_j^2)} + c_j n^{-1/2} + 1/n)$, which implies (\ref{eeq3.4}).



\end{proof}

\begin{lem}
\label{lem6.4}
As $n \to \infty$, $\|\br_n \| = O_P(\lambda_j^{-1} \sqrt{p/n} + 1/n)$.
\end{lem}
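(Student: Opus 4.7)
The plan is to attack the four summands of (\ref{mainExpression_r}) one at a time, supplementing direct norm bounds by a short bootstrap argument to obtain the sharp rate. As preparation, I would collect the following auxiliary estimates, all of which are consequences of Lemmas \ref{lem6.1}, \ref{lem6.2} and \ref{lem6.3} together with Theorem \ref{thm3.1}: $\|\bD\| = O_P(\lambda_j^{-1}\sqrt{p/n})$; $\|n^{-1}\bZ_A'\bZ_A - \bI_m\| = O_P(1/\sqrt{n})$; $\|\bZ_A\| = O_P(\sqrt{n})$; $\|\bZ_B\bLambda_B^{1/2}\| = O_P(\sqrt{p})$; $\|\bR\|$ and $\|\bR(\bLambda_A/\lambda_j)^{1/2}\|$ are $O(1)$ by the well-separation condition and the existence of the limits $a_{jk}$; $|\Delta| = O_P(\lambda_j^{-1}\sqrt{p/n} + 1/\sqrt{n})$; and $\|\hat\bxi_{jA}\|$ and $\hat\lambda_j/\lambda_j$ are bounded away from $0$ and $\infty$ in probability (from Lemma \ref{lem6.6}, which can be proved independently). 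Chaining these gives $\|\bK\| = O_P(\lambda_j^{-1}\sqrt{p/n} + 1/\sqrt{n})$.

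The third (cross) summand of $\br_n$ is handled directly, without bootstrap:
$$
\Big\|\bR\frac{\bLambda_A^{1/2}\bZ_A'\bD\bZ_B\bLambda_B^{1/2}}{n\hat\lambda_j}\frac{\hat\bxi_{jB}}{\|\hat\bxi_{jA}\|}\Big\| \le \frac{\|\bR\bLambda_A^{1/2}\|\cdot\|\bZ_A\|\cdot\|\bD\|\cdot\|\bZ_B\bLambda_B^{1/2}\|\cdot\|\hat\bxi_{jB}\|}{n\hat\lambda_j\|\hat\bxi_{jA}\|} = O_P\!\Big(\frac{p}{n\lambda_j^{3/2}}\Big).
$$
Writing $p/(n\lambda_j^{3/2}) = \sqrt{c_j}\cdot\lambda_j^{-1}\sqrt{p/n}$ and using boundedness of $c_j$, this contribution is $O_P(\lambda_j^{-1}\sqrt{p/n})$, which already supplies the first piece of the target rate.

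For the remaining three summands I would bootstrap on (\ref{mainExpression_u}). The already-established consistency $\langle \hat\bxi_j,\be_j\rangle \to (1+\bar{c}c_j)^{-1/2}$, combined with $\|\hat\bxi_{jA}\|^2 \to (1+\bar{c}c_j)^{-1}$, yields the crude rate $\delta := \|\hat\bxi_{jA}/\|\hat\bxi_{jA}\|-\be_{jA}\| = o_P(1)$. Plugging this into $\br_n$, Term~$2$ is $O(\|\bK\|\,\delta) = o_P(\|\bK\|)$, Term~$4$ is $O(|\Delta|\,\delta) = o_P(|\Delta|)$, and Term~$1$ equals $-\tfrac12\delta^2\be_{jA}$ (using that both $\hat\bxi_{jA}/\|\hat\bxi_{jA}\|$ and $\be_{jA}$ are unit vectors), hence is $O_P(\delta^2)$. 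Feeding $\|\br_n\|=o_P(\|\bK\|)+O_P(\lambda_j^{-1}\sqrt{p/n})$ back into (\ref{mainExpression_u}) — whose leading term has norm $O_P(\|\bK\|)$ — upgrades the rate to $\delta = O_P(\lambda_j^{-1}\sqrt{p/n}+1/\sqrt{n})$. A second iteration then gives Terms~$1$, $2$, and $4$ of size $O_P(\delta^2+\|\bK\|\delta+|\Delta|\delta) = O_P\bigl((\lambda_j^{-1}\sqrt{p/n}+1/\sqrt{n})^2\bigr) = O_P(\lambda_j^{-2}p/n+1/n)$. Since $\lambda_j^{-1}\sqrt{p/n}=\sqrt{c_j/\lambda_j}$ is bounded (as $c_j$ is bounded), one has $(\lambda_j^{-1}\sqrt{p/n})^2 \le C\lambda_j^{-1}\sqrt{p/n}$, so these three terms contribute at most $O_P(\lambda_j^{-1}\sqrt{p/n}+1/n)$. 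Adding the Term~$3$ bound yields $\|\br_n\|=O_P(\lambda_j^{-1}\sqrt{p/n}+1/n)$.

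The main obstacle is the bootstrap step: one must check that $\br_n$ genuinely enters equation (\ref{mainExpression_u}) multiplied only by $o_P(1)$ factors (coming from $\|\bK\|$, $|\Delta|$, and $\delta$ itself in the quadratic Term~$1$), so that the fixed-point structure can be inverted to produce the refined rate for $\delta$ without any circularity. A secondary technical point is the operator-norm bound on the asymmetric bilinear form $\bZ_A'\bD\bZ_B\bLambda_B^{1/2}$: the clean factorization $\|\bZ_A\|\cdot\|\bD\|\cdot\|\bZ_B\bLambda_B^{1/2}\|$ used above is legitimate only because the entrywise independence in Assumption~\ref{assump3} makes $\bZ_A$ and $(\bD,\bZ_B)$ independent, and Lemma~\ref{lem6.1} supplies the tight sub-Gaussian spectral control for each factor individually.
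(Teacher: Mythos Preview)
Your proof is correct and reaches the same conclusion as the paper, but by a somewhat more circuitous route. The paper avoids bootstrapping entirely by working one step earlier: instead of iterating on (\ref{mainExpression_u}), it divides (\ref{eqn4.5}) by $\|\hat\bxi_{jA}\|$ to obtain $\bv_j := \hat\bxi_{jA}/\|\hat\bxi_{jA}\| - \langle \hat\bxi_{jA}/\|\hat\bxi_{jA}\|, \be_{jA}\rangle\,\be_{jA}$ on the left. The right side of (\ref{eqn4.5}) is applied to the \emph{unit vector} $\hat\bxi_{jA}/\|\hat\bxi_{jA}\|$ rather than to $\hat\bxi_{jA}/\|\hat\bxi_{jA}\| - \be_{jA}$, so one gets $\|\bv_j\| \le \beta_j + \gamma_j$ in a single step, where $\beta_j$ bundles the $\bK$- and $\Delta$-terms and $\gamma_j$ is your Term~3. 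Then the elementary identities $1 - \langle \hat\bxi_{jA}/\|\hat\bxi_{jA}\|, \be_{jA}\rangle \le \|\bv_j\|^2$ and $\|\hat\bxi_{jA}/\|\hat\bxi_{jA}\| - \be_{jA}\| \le \|\bv_j\|(1+\|\bv_j\|)$ feed directly into $\|\br_n\| \le \|\bv_j\|^2 + \beta_j\|\hat\bxi_{jA}/\|\hat\bxi_{jA}\| - \be_{jA}\| + \gamma_j$, finishing the argument without ever invoking a crude initial rate $\delta = o_P(1)$. Your two-pass iteration recovers the same bound but needs an external consistency input to seed it; the paper only requires $\|\hat\bxi_{jA}\|$ bounded away from zero (to control $\gamma_j$), which is a weaker prerequisite and sidesteps any worry about circular dependence on Lemma~\ref{lem6.6}.

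One minor conceptual slip in your last paragraph: the factorization $\|\bZ_A' \bD \bZ_B\bLambda_B^{1/2}\| \le \|\bZ_A\|\,\|\bD\|\,\|\bZ_B\bLambda_B^{1/2}\|$ is just submultiplicativity of the operator norm and holds for arbitrary matrices; independence of $\bZ_A$ and $(\bD,\bZ_B)$ plays no role there. Independence (via Assumption~\ref{assump3}) matters only for the individual spectral bounds supplied by Lemma~\ref{lem6.1}.
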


\begin{proof}
Define $\bv_j = \hat\bxi_{jA}/\|\hat\bxi_{jA}\| - \langle \hat\bxi_{jA}/\|\hat\bxi_{jA}\|, e_{jA} \rangle e_{jA}$ and $\alpha_j$, $\beta_j$ and $\gamma_j$ as follows:
\begin{equation*}
\begin{aligned}
&\alpha_j  = \left\|\bR \Big(\frac{\bLambda_A}{\lambda_j}\Big)^{\frac12} \bK \be_{jA} \right\| \,, \\
&\beta_j  = \left\| \bR \Big(\frac{\bLambda_A}{\lambda_j}\Big)^{\frac12} \bK \Big(\frac{\bLambda_A}{\lambda_j}\Big)^{\frac12} \right\|  + \Delta  \left\| \bR \right\| \,, \\
&\gamma_j  = \left\| \bR \frac{\bLambda_A^{\frac12} \bZ_A' \bD \bZ_B \bLambda_B^{\frac12}}{n \hat\lambda_j} \frac{\hat\bxi_{jB}}{\|\hat\bxi_{jA}\|} \right\|\,.
\end{aligned}
\end{equation*}
We claim that $\gamma_j = O_P(\lambda_j^{-1} \sqrt{p/n})$ and $\alpha_j, \beta_j, \|\bv_j\| = O_P(\lambda_j^{-1} \sqrt{p/n}+n^{-\frac12})$. Then the rate of $\|\br_n\|$ could be easily derived from its definition (\ref{mainExpression_r}) and the above results. To be specific, first notice the following two inequalities: by (\ref{eqn4.5}), $\|\bv_j\| \le \beta_j + \gamma_j$; by orthogonal decomposition $\hat\bxi_{jA}/\|\hat\bxi_{jA}\| = \langle \hat\bxi_{jA}/\|\hat\bxi_{jA}\|, \be_{jA} \rangle \be_{jA} + \bv_j$, we have $1- \langle \hat\bxi_{jA}/\|\hat\bxi_{jA}\|, \be_{jA} \rangle = 1-\sqrt{1-\|\bv_j\|^2} \le \|\bv_j\|^2$. Note that we always choose $\hat \bxi$ so that $\langle \hat\bxi_{jA}/\|\hat\bxi_{jA}\|, \be_{jA} \rangle$ is positive. Therefore
\begin{equation}
\label{eqn4.5_rate1}
\begin{aligned}
\left\|\frac{\hat\bxi_{jA}}{\|\hat\bxi_{jA}\|} - \be_{jA} \right\| & = \left\| \bv_j + \Big(\langle \frac{\hat\bxi_{jA}}{\|\hat\bxi_{jA}\|}, \be_{jA} \rangle -1\Big) \be_{jA} \right\| \\
& \le \|\bv_j\|(1+\|\bv_j\|) \le \|\bv_j\| (1+\beta_j + \gamma_j)\,.
\end{aligned}
\end{equation}
Hence, by (\ref{mainExpression_r}),
\begin{equation*}
\begin{aligned}
\|\br_n\| & \le \|\bv_j\|^2 + \beta_j \left\|\frac{\hat\bxi_{jA}}{\|\hat\bxi_{jA}\|} - \be_{jA} \right\| + \gamma_j \\
& \le \|\bv_j\|(\|\bv_j\| + \beta_j(1+\beta_j + \gamma_j)) + \gamma_j = O_P(\lambda_j^{-1} \sqrt{p/n}+1/n)\,.
\end{aligned}
\end{equation*}

It remains to show the claims above. Let us first show the rate of convergence of $\gamma_j$. In order to prove this, we need the rate of $\|\bD\|$. By Lemma \ref{lem6.1}, $\| (p-m)^{-1} \bZ_B \bLambda_B \bZ_B' - \bar c \bI \| = O_P(\sqrt{n/p})$, so we have
\begin{equation*}
\begin{aligned}
\| \bD \| & = \| \frac{1}{n} \frac{\bZ_B \bLambda_B \bZ_B'}{\lambda_j} - \bar c c_j \bI_n \| \\
& \le \frac{p-m}{n \lambda_j} \| \frac{1}{p-m} \bZ_B \bLambda_B \bZ_B' - \bar c \bI \| + \bar c | \frac{m}{n \lambda_j} | = O_P(\lambda_j^{-1} \sqrt{p/n})\,.
\end{aligned}
\end{equation*}
Hence
$$
\gamma_j \le \left\| \frac{\bR \bLambda_A^{\frac12}}{\sqrt{ \lambda_j}}  \right\| \left\| \frac{\lambda_j}{\hat\lambda_j} \right\|  \left\| \frac{ \bZ_A'}{\sqrt{n}} \right\| \| \bD \| \left\| \frac{\bZ_B\bLambda_B^{\frac{1}{2}}}{{\sqrt{n \lambda_j}}} \right\| \frac{\|\hat\bxi_{jB}\|}{\|\hat\bxi_{jA}\|} = O_P(\lambda_j^{-1} \sqrt{p/n})\,,
$$
since the other terms except $\|\bD\|$ are all $O_P(1)$. Indeed, (\ref{eqn_R}) says the first term is asymptotically bounded. We have shown in the proofs of Lemmas {\ref{lem6.2}} and \ref{lem6.3} that the second, third and fifth terms are $O_P(1)$. In addition, the facts that $\|\hat\bxi_{jB}\| \le 1$, $\|\hat\bxi_{jA}\| \overset{P} \to (1+\bar c c_j)^{-\frac12}$ imply the last term is $O_P(1)$.

Then let us show that $\alpha_j$ and $\beta_j$ are $O_P(\lambda_j^{-1} \sqrt{p/n}+n^{-\frac12})$. The rate of $\|\bK\|$ is needed. By Lemma \ref{lem6.1}, $\| \frac{1}{n} \bZ_A' \bZ_A - \bI \| = O_P(\sqrt{m/n}) = O_P(n^{-\frac12})$.  Thus,
\begin{align*}
\|\bK\| &= \left\| \frac 1 n \bZ_A' \bZ_A - \bI_n + \frac{\lambda_j}{\hat \lambda_j} \frac 1 n \bZ_A' \bD \bZ_A \right\|  \\
& \le \left\| \frac 1 n \bZ_A' \bZ_A - \bI_n \right\| + |\frac{\lambda_j}{\hat \lambda_j}| \| \bD \| \left\| \frac 1 n \bZ_A' \bZ_A \right\| = O_P\Big(\sqrt{\frac {p}{n\lambda_j^2}}+n^{-\frac12}\Big)\,.
\end{align*}
Then easily we get $\alpha_j = O_P(\lambda_j^{-1} \sqrt{p/n}+n^{-\frac12})$. Note that from Theorem \ref{thm3.1} that $\Delta = \hat \lambda_j/\lambda_j - (1+\bar c c_j) = O_P(\lambda_j^{-1} \sqrt{p/n}+n^{-\frac12})$, so
$$
\beta_j \le \| \bK\| \|\bR \bLambda_A / \lambda_j \| + \Delta \|\bR\| = O_P(\lambda_j^{-1} \sqrt{p/n}+n^{-\frac12}) \,,
$$
where similar to (\ref{eqn_R}), $\|\bR \bLambda_A / \lambda_j \|$ and $\|\bR\|$ are $O_P(1)$.

Finally, $\|\bv_j\| \le \beta_j + \gamma_j = O_P(\lambda_j^{-1} \sqrt{p/n}+n^{-\frac12})$.
The proof is complete.
\end{proof}

\begin{lem}
\label{lem6.5}
$\bW^{(-j)} + O_P(\lambda_j^{-1} \sqrt{p/n})\overset{d} \Rightarrow N(\bf 0,\bI_{m-1})$.
\end{lem}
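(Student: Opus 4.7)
The plan is to split the $(m-1)$-dimensional vector $\bW^{(-j)}$ into a dominant piece obeying a classical multivariate CLT and a negligible quadratic-form remainder, then combine via Slutsky's theorem and the definition of $\overset{d}\Rightarrow$ given in Section \ref{sec2}. Using $\bK = n^{-1}\bZ_A'\bZ_A - \bI + \lambda_j(n\hat\lambda_j)^{-1}\bZ_A'\bD\bZ_A$ and reading off the $(k,j)$-entry for $k\in[m]\setminus j$ (where the identity contributes $0$), I obtain
$$
W_k = n^{-1/2}\bZ_k'\bZ_j + \frac{\lambda_j}{\hat\lambda_j}\frac{1}{\sqrt n}\bZ_k'\bD\bZ_j =: S_k + R_k,
$$
with $\bZ_k$ the $k$-th column of $\bZ_A$. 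Two independence facts drive the argument: $\bD$ is a function of $\bZ_B$ only, hence independent of $\bZ_A$; and the columns $\bZ_k$ and $\bZ_j$ of $\bZ_A$ are independent for $k\ne j$ by Assumption \ref{assump3}.

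For the dominant part I would apply the classical i.i.d.\ multivariate CLT to $\bV_i := (Z_{ik}Z_{ij})_{k\in[m]\setminus j}$. These are i.i.d., mean zero, with covariance $\bI_{m-1}$: for $k\ne l$, both different from $j$, $\mathbb E[Z_{ik}Z_{il}Z_{ij}^2]=\mathbb E[Z_{ik}]\mathbb E[Z_{il}]\mathbb E[Z_{ij}^2]=0$, while $\mathbb E[Z_{ik}^2 Z_{ij}^2]=1$. Since sub-Gaussianity (Assumption \ref{assump3}) supplies a finite second moment, $\bS^{(-j)} := n^{-1/2}\sum_i \bV_i \overset{d}\Rightarrow N(\bzero,\bI_{m-1})$ follows immediately.

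The main obstacle is controlling the remainder $\bR^{(-j)} := (R_k)_{k\ne j}$. I would condition on $\bD$ and exploit its independence from $(\bZ_k,\bZ_j)$: the standard quadratic-form identity gives $\mathbb E[(\bZ_k'\bD\bZ_j)^2 \mid \bD] = \|\bD\|_F^2 \le n\|\bD\|^2$. The proof of Lemma \ref{lem6.4} already yields $\|\bD\| = O_P(\lambda_j^{-1}\sqrt{p/n})$, hence $\|\bD\|_F^2 = O_P(p/\lambda_j^2)$, and Markov's inequality produces $\bZ_k'\bD\bZ_j = O_P(\sqrt p/\lambda_j)$ unconditionally. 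Combined with $\lambda_j/\hat\lambda_j = O_P(1)$ from Theorem \ref{thm3.1}, this forces $R_k = O_P(\lambda_j^{-1}\sqrt{p/n})$ for each $k$, and since $m$ is fixed the same order bound holds in Euclidean norm for $\bR^{(-j)}$. The delicate point is that $\hat\lambda_j$ mixes information from both $\bZ_A$ and $\bZ_B$, so one cannot peel it off before conditioning; only its stochastic boundedness, not independence, is available, and that is what I would use. Slutsky's theorem then delivers the stated representation $\bW^{(-j)} + O_P(\lambda_j^{-1}\sqrt{p/n}) \overset{d}\Rightarrow N(\bzero,\bI_{m-1})$.
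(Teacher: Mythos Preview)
Your proof is correct and follows the same overall decomposition $W_k = S_k + R_k$ as the paper. The treatment of the remainder is essentially equivalent: the paper also reduces to $\|\bD\| = O_P(\lambda_j^{-1}\sqrt{p/n})$ and a second-moment bound on the bilinear form, arriving at the same $O_P(\lambda_j^{-1}\sqrt{p/n})$ rate (your Frobenius-norm route via $\mathbb E[(\bZ_k'\bD\bZ_j)^2\mid\bD]=\|\bD\|_F^2\le n\|\bD\|^2$ is in fact cleaner than the inequality printed in the paper).

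The one genuine methodological difference is in the CLT for the dominant piece $\bS^{(-j)}$. The paper does \emph{not} invoke the i.i.d.\ multivariate CLT directly; instead it works with characteristic functions, conditioning on $\bZ_j$ and writing
\[
\mathbb E\bigl[e^{i\ba'\tilde\bW}\bigr]
=\mathbb E\Bigl[\prod_{t\ne j}\prod_{k=1}^n f_t\bigl(a_t Z_{kj}/\sqrt n\bigr)\Bigr],
\]
then Taylor-expanding each factor and passing to the limit via dominated convergence. Your observation that $\bS^{(-j)}=n^{-1/2}\sum_{i=1}^n\bV_i$ with $\bV_i=(Z_{ik}Z_{ij})_{k\ne j}$ i.i.d., mean zero, covariance $\bI_{m-1}$ (using independence of entries within a row) short-circuits all of this: the classical multivariate CLT applies in one line. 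This is more elementary and transparent; the paper's route is longer but perhaps makes the role of the sub-Gaussian moment control more explicit. Either argument is valid.
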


\begin{proof}
Recall $\bW = \sqrt{n} \bK \be_{jA}$. Then, by the definition of $\bK$,
\begin{equation*}
\bW =   \frac{1}{\sqrt{n}} \bZ_A' \bZ_j - \sqrt{n}\be_{jA} + \frac{\lambda_j}{\hat \lambda_j} \frac{1}{\sqrt{n}} \bZ_A' \bD \bZ_j\,.
\end{equation*}
Its $t^{th}$ component is $W_t = n^{-1/2} \bZ_t' \bZ_j + \delta_{nt}$ for $t \in [m] \setminus j$ where $\delta_{nt} = (\lambda_j / \hat \lambda_j) \cdot n^{-1/2} \bZ_t' \bD \bZ_j$. Denote $\tilde \bW = (n^{-1/2} \bZ_t' \bZ_j)_{t \in [m] \setminus j}$ and $\bdelta_n = (\delta_{nt})_{t \in [m] \setminus j}$. We claim as $n \to \infty$, $\|\bdelta_n \| = O_P(\lambda_j^{-1} \sqrt{p/n})$. So $\bW^{(-j)} =  \tilde \bW + O_P(\lambda_j^{-1} \sqrt{p/n})$. In order to prove the lemma, it suffices to show that $\tilde \bW$ follows $N({\bf 0},\bI_{m-1})$. That is, for any vector $\ba$ of $m-1$ dimension, $\mathbb E [ \exp(i \ba' \tilde \bW) ] \to \exp(-\| \ba \|^2/2)$ almost surely.
$$
\mathbb E \Big[ e^{i{\ba}' \tilde \bW} \Big] = \mathbb E \Big[ \mathbb E \Big[ \prod \limits_{t \in [m] \setminus j} e^{i a_t \bZ_t' \bZ_j /\sqrt{n}} | \bZ_j \Big] \Big] = \mathbb E \Big[ \prod_{t \in [m] \setminus j} \prod_{k = 1}^n f_t\Big(\frac{1}{\sqrt{n}} a_t Z_{kj} \Big) \Big],
$$
where $f_j(u) = \mathbb E [ \exp(i u Z_{kj}) ]$ is the characteristic function of each element of $\bZ_j$. The sub index $j$ means we actually allow different characteristic functions for the columns of $\bZ_A$ and $\bZ_B$.

By Taylor expansion, we can easily derive
$$
| e^{ix} - 1 - ix + x^2/2 | \le (|x|^{3}/6) \wedge x^2\,,
$$
from which it holds that
$$
| f_j(u) - 1 - iu\mathbb E[Z_{kj}] + \frac{u^2}{2} \mathbb E[Z_{kj}^2] | \le u^2 \mathbb E\Big[ \frac{|u|}{6} |Z_{kj}|^3 \wedge Z_{kj}^2 \Big]\,.
$$
$\mathbb E\Big[ \frac{|u|}{6} |Z_{kj}|^3 \wedge Z_{kj}^2 \Big]$ goes to $0$ as $u \to 0$ and is dominated by the integrable function $Z_{kj}^2$. So by Dominated Convergence Theorem the right hand side is $o(u^2)$. Therefore, $f_j(u) = 1-u^2/2 + o(u^2)$. Using this result, we have
\begin{align*}
\mathbb E \Big[ e^{i{\ba}' \tilde \bW} \Big] & = \mathbb E \Big[ \prod_{t \in [m] \setminus j} \prod_{k = 1}^n \left( 1 - \frac{a_t^2}{2n} Z_{kj}^2 \right) \Big] + o(1)\\
& = \mathbb E \Big[ \prod_{k = 1}^n \left( 1 - \frac{\|\ba\|^2}{2n} Z_{kj}^2 \right) \Big] + o(1) \\
& = \prod_{k = 1}^n \mathbb E \Big[  1 - \frac{\|\ba\|^2}{2n} Z_{kj}^2 \Big] + o(1) \\
& = \left( 1 - \frac{\|{\bf a}\|^2}{2n} \right)^{n} + o(1) \overset{a.s.} \to \exp(-\|{\bf a}\|^2/2)\,.
\end{align*}
which implies $\tilde W$ follows $N(0,I_{m-1})$.

Now let us validate $\|\bdelta_n \| = O_P(\lambda_j^{-1} \sqrt{p/n})$. Clearly
$$
|\delta_{nt}| \le | \lambda_j/\hat \lambda_j | \left| \frac{1}{\sqrt{n}} \bZ_t' \bZ_j \right| \|\bD\| \,.
$$
We have shown that $| \lambda_j/\hat \lambda_j | = O_P(1)$ and $\|\bD\| = O_P(\lambda_j^{-1} \sqrt{p/n})$. It suffices to show $\frac{1}{\sqrt{n}} \bZ_t' \bZ_j = O_P(1)$.
$$
\mathbb E \Big[  \Big| \frac{1}{\sqrt{n}} \bZ_t'
\bZ_j \Big |^2 \Big] = \frac{1}{n} \mathbb E \Big[ \mathbb E \Big[ (\bZ_t'
\bZ_j) ^2 | \bZ_j \Big] \Big] = \frac{1}{n} \mathbb E \Big[ \bZ_j' \bZ_j \Big] = 1\,.
$$
So by Markov inequality, we have $| n^{-1/2} \bZ_t' \bZ_j|$ is $O_P(1)$, which generates $\delta_{nt} = O_P(\lambda_j^{-1} \sqrt{p/n})$. So is $\|\bdelta_n\|$ since $\bdelta_n$ is of fixed length $m-1$. The proof is complete.
\end{proof}

\begin{lem}
\label{lem6.6}
$\|\hat\bxi_{jA}\| = (1+ \bar c c_j)^{-1/2} + O_P(\lambda_j^{-1}\sqrt{p/n} + c_j n^{-1/2})$ and \\
$\|\hat\bxi_{jB}\| = (\frac{\bar c c_j}{1+ \bar c c_j})^{1/2} + O_P(\sqrt{1/\lambda_j} + \sqrt{c_j} n^{-1/2})$.
\end{lem}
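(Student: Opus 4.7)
The plan is to exploit the basic identity (\ref{VecRel}) together with $\|\hat\bxi_j\|=1$ to reduce everything to computing $\|\hat\bxi_{jB}\|^2$, and then to control this quantity via the bounded eigenvalue block perturbation $\bD$ introduced in Theorem~\ref{thm3.1}'s proof.

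First, write $\bX=(\bZ_A\bLambda_A^{1/2},\bZ_B\bLambda_B^{1/2})$ so that (\ref{VecRel}) gives
$\hat\bxi_{jA}=(n\hat\lambda_j)^{-1/2}\bLambda_A^{1/2}\bZ_A'\bu_j$ and
$\hat\bxi_{jB}=(n\hat\lambda_j)^{-1/2}\bLambda_B^{1/2}\bZ_B'\bu_j$.
Because $\bu_j$ is a unit eigenvector of $\widetilde\bSigma=n^{-1}\bX\bX'$, direct computation shows $\|\hat\bxi_{jA}\|^2+\|\hat\bxi_{jB}\|^2=\bu_j'\bu_j=1$, so it suffices to study $\|\hat\bxi_{jB}\|^2$. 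Using the definition $\bD=(n\lambda_j)^{-1}\bZ_B\bLambda_B\bZ_B'-\bar c c_j\bI_n$ from the proof of Theorem~\ref{thm3.1}, one obtains the key identity
\[
\|\hat\bxi_{jB}\|^2\;=\;(n\hat\lambda_j)^{-1}\bu_j'\bZ_B\bLambda_B\bZ_B'\bu_j\;=\;\frac{\lambda_j}{\hat\lambda_j}\bigl(\bar c c_j+\bu_j'\bD\bu_j\bigr).
\]

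Next, I would plug in the rates that are already available. By Lemma~\ref{lem6.1} applied as in Lemma~\ref{lem6.4}, $\|\bD\|=O_P(\lambda_j^{-1}\sqrt{p/n})$, so $|\bu_j'\bD\bu_j|\le\|\bD\|=O_P(\lambda_j^{-1}\sqrt{p/n})$. Theorem~\ref{thm3.1} gives $\hat\lambda_j/\lambda_j=1+\bar c c_j+O_P(\lambda_j^{-1}\sqrt{p/n}+n^{-1/2})$, and since $1+\bar c c_j$ is bounded from above and below (Assumption~\ref{assump2}), inversion yields $\lambda_j/\hat\lambda_j=(1+\bar c c_j)^{-1}+O_P(\lambda_j^{-1}\sqrt{p/n}+n^{-1/2})$. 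Multiplying, and noting $c_j\cdot O_P(n^{-1/2})=O_P(c_j n^{-1/2})$ while the cross terms are of smaller order, one arrives at
\[
\|\hat\bxi_{jB}\|^2=\frac{\bar c c_j}{1+\bar c c_j}+O_P\!\Bigl(\lambda_j^{-1}\sqrt{p/n}+c_j n^{-1/2}\Bigr),
\qquad
\|\hat\bxi_{jA}\|^2=\frac{1}{1+\bar c c_j}+O_P\!\Bigl(\lambda_j^{-1}\sqrt{p/n}+c_j n^{-1/2}\Bigr).
\]

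The only delicate step is passing from the squares to the norms themselves, because $\bar c c_j/(1+\bar c c_j)$ can vanish when $c_j\to 0$. I would use the elementary bound $|\sqrt{a+\varepsilon}-\sqrt a|\le |\varepsilon|/\sqrt a$ valid whenever $a,a+\varepsilon>0$. For $\|\hat\bxi_{jA}\|$ this is painless: $1/(1+\bar c c_j)$ is bounded below by a positive constant, so the error transfers directly, giving $\|\hat\bxi_{jA}\|=(1+\bar c c_j)^{-1/2}+O_P(\lambda_j^{-1}\sqrt{p/n}+c_j n^{-1/2})$. For $\|\hat\bxi_{jB}\|$ the denominator $\sqrt{\bar c c_j/(1+\bar c c_j)}$ is of order $\sqrt{c_j}$ (since $c_j$ is bounded above), so dividing the error by $\sqrt{c_j}$ and using the algebraic identities $\lambda_j^{-1}\sqrt{p/n}=\sqrt{c_j/\lambda_j}$ and $c_j n^{-1/2}/\sqrt{c_j}=\sqrt{c_j}n^{-1/2}$, we recover exactly the advertised rate $O_P(1/\sqrt{\lambda_j}+\sqrt{c_j}n^{-1/2})$. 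The main (and essentially the only) subtle point is this bookkeeping that converts the squared error into the stated norm error when $c_j$ may tend to zero; everything else is a direct assembly of Lemma~\ref{lem6.1} and Theorem~\ref{thm3.1}.
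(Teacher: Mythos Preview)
Your proof is correct but takes a different route from the paper's. You compute $\|\hat\bxi_{jB}\|^2$ directly from (\ref{VecRel}) as $(\lambda_j/\hat\lambda_j)(\bar c c_j + \bu_j'\bD\bu_j)$ and control it using only the bound $\|\bD\|=O_P(\lambda_j^{-1}\sqrt{p/n})$ and Theorem~\ref{thm3.1}. The paper instead introduces the reweighted diagonal $\bar\bLambda=\diag(1,\dots,1,\lambda_{m+1},\dots,\lambda_p)$, sandwiches the quadratic form $\hat\lambda_j\sum_k\hat\xi_{jk}^2\bar\lambda_k/\lambda_k$ between the extreme eigenvalues of $n^{-1}\bZ\bar\bLambda\bZ'$ via Lemma~\ref{lem6.1}, and then peels off the spike contribution $\sum_{k\le m}\hat\xi_{jk}^2/\lambda_k$ by appealing to the eigenvector rates already established in Theorem~\ref{thm3.2}(i); this is why the paper separates the cases $m=1$ and $m>1$. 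Your argument is shorter, works uniformly in $m$, and depends only on Theorem~\ref{thm3.1} rather than on Theorem~\ref{thm3.2}(i), which makes the chain of lemmas more self-contained. The paper's version is not wrong---just less direct, and thematically in line with its recurring device of passing to an auxiliary $n\times n$ matrix. Your square-root step is also handled correctly: since $c_j\lambda_j=p/n>1$ under Assumption~\ref{assump2}, the ratio of the error to the leading term stays bounded, so the elementary inequality $|\sqrt{a+\varepsilon}-\sqrt a|\le|\varepsilon|/\sqrt a$ delivers the stated rate.
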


\begin{proof}
If $m = 1$, Theorem \ref{thm3.2} (i) directly implies the conclusions. So in the following, we only consider $m > 1$.
Recall that $\bX = (\bZ_A \bLambda_A^{\frac 1 2}, \bZ_B \bLambda_B^{\frac 1 2})$. Let $\bZ = (\bZ_A, \bZ_B)$, then
$$
\bZ = \bX \bLambda^{-\frac 1 2} = \sqrt{n} \hat\bLambda^{\frac 12} (\hat\bxi_1, \dots, \hat\bxi_n)' \bLambda^{-\frac 1 2}\,,
$$
where $\bLambda = \diag(\bLambda_A, \bLambda_B)$ and $\hat\bLambda = \diag(\hat\lambda_1, \dots, \hat\lambda_n)$.  Define
$$
\bar \bLambda = \diag(1,\dots, 1, \lambda_{m+1},\dots,\lambda_{p})
$$
and consider the eigenvalue of the matrix $n^{-1} \bZ \bar\bLambda \bZ'$. The $j$-th diagonal element of the matrix must lie in between its minimum and maximum eigenvalues. That is
$$
\lambda_n(\frac 1 n \bZ \bar\bLambda \bZ') \le \Big(\frac 1n \bZ \bar\bLambda \bZ'\Big)_{jj} = \hat\lambda_j \sum_{k=1}^p \hat\xi_{jk}^2 \frac{\bar\lambda_k}{\lambda_k} \le \lambda_1(\frac 1 n \bZ \bar\bLambda \bZ')\,,
$$
where $\hat\xi_{jk}$ is the $k$-th element of the $j$-th empirical eigenvector for $j \le m$. Divided by $\hat\lambda_j$, then by Theorem \ref{thm3.1} and Lemma \ref{lem6.1} both the left and right hand side converge to
$$
\frac{\bar c c_j}{1+\bar c c_j} + O_P\Big(\sqrt{\frac{p}{n\lambda_j^2}} + c_j n^{-1/2}\Big) \,.
$$
So $\sum_{k=1}^p \hat\xi_{jk}^2 \bar\lambda_k/\lambda_k$ also converges to the above quantity. Also, by definition, $\bar\lambda_k/\lambda_k = O_P(\lambda_k)$ for $k \le m$ while the ratio is 1 for $k > m$. By Theorem \ref{thm3.2} (i), $\hat\xi_{jk}^2 = O_P(n^{-1}\lambda_j\lambda_k/(\lambda_j - \lambda_k)^2)$ for $j \ne k \le m$. Hence, $\|\hat\bxi_{jB}\|^2 = \sum_{k=m+1}^p \hat\xi_{jk}^2$ again converges to the above quantity, which implies the rates of convergence for $\|\hat\bxi_{jB}\|$ and $\|\hat\bxi_{jA}\| = (1-\|\hat\bxi_{jB}\|^2)^{1/2}$.
\end{proof}

\appendix

\section{Comparison on assumptions} \label{secA}
The following assumptions are from \cite{FanLiaMin13}, where the results were established for the mixing sequence. But we only consider i.i.d. data in this paper. The  assumptions are listed for completeness and comparison with Assumptions \ref{assump1:factor} and \ref{assump2:factor}.

\begin{assum} \label{assPoet1}
$\| p^{-1} \bB'\bB -\bOmega_0\| = o(1)$ for some $m \times m$ symmetric positive definite matrix $\bOmega_0$ such that $\bOmega_0$ has $m$ distinct eigenvalues and that $\lambda_{\min}(\bOmega_0)$ and $\lambda_{\max}(\bOmega_0)$ are bounded away from both zero and infinity.
\end{assum}

\begin{assum} \label{assPoet2}
(i) $\{\bu_t, \bff_t\}_{t \ge 1}$ is strictly stationary. In addition, $\mathbb E[u_{it}] = \mathbb E[u_{it} f_{jt}] = 0$ for all $i \le p, j \le m$ and $t \le T$. \\
(ii) There exist positive constants $c_1$ and $c_2$ such that $\lambda_{\min}(\bSigma_u) > c_1$, $\|\bSigma_u\|_{\infty} < c_2$, and $\min_{i,j} \Var(u_{it} u_{jt}) > c_1$. \\
(iii) There exist positive constants $r_1, r_2, b_1$ and $b_2$ such that for $s>0, i\le p, j \le m$,
\[
\mathbb P(|u_{it}| > s) \le exp(-(s/b_1)^{r_1}) \; \text{ and } \; \mathbb P(|f_{jt}| > s) \le exp(-(s/b_2)^{r_2})\,.
\]
\end{assum}

We introduce the strong mixing conditions. Let $\mathcal F_{-\infty}^0$ and $\mathcal F_{n}^{\infty}$ denote the $\sigma$-algebras generated by $\{(\bff_s, \bu_s): -\infty \le s \le 0\}$ and $\{(\bff_s, \bu_s): n \le s \le \infty\}$ respectively. In addition, define the mixing coefficient
\[
\alpha(n) = \sum_{A \in \mathcal F_{-\infty}^0, B \in \mathcal F_{n}^{\infty}} |\mathbb P(A) \mathbb P(B) - \mathbb P(AB)| \,.
\]

\begin{assum} \label{assPoet3}
There exists $r_3 > 0$ such that $3 r_1^{-1} + 1.5 r_2^{-1} + r_3^{-1} > 1$ and $C>0$ satisfying $\alpha(n) \le \exp(-Cn^{r_3})$ for all $n$.
\end{assum}
Note that for the independence case, Assumption \ref{assPoet3} is trivially satisfied since $\alpha(n) = 0$ for all $n$.

\begin{assum} \label{assPoet4}
There exists $M > 0$ such that for all $i \le p$ and $s, t \le T$, \\
(i) $\|\bb_i\|_{\max} \le M$, \\
(ii) $\mathbb E[p^{-1/2} (\bu_s'\bu_t - \mathbb E \bu_s'\bu_t)]^4 \le M$, \\
(iii) $\mathbb E\|p^{-1/2} \sum_{i=1}^p \bb_i u_{it}\|^4 \le M$.
\end{assum}

\section{Proofs of Theorems in Section 4} \label{secB} 
In order to prove theorems in Section \ref{sec4}, convergence rate of the sparse error matrix $\bSigma_u$ is required. The following theorem states the convergence rate for estimating $\bSigma_u$ by the thresholding procedure in (\ref{thresholding}). Its proof and related technical lemmas are given in Appendix \ref{secC}. 

\begin{thm} \label{thm_thresholding}
Under the assumptions of Theorem \ref{thm_RelSpectral}, by applying adaptive thresholding estimator (\ref{thresholding}) with $\tau_{ij} = C \omega_T (\hat\sigma_{u,ii} \hat\sigma_{u,jj})^{1/2}$ and $\omega_T = \sqrt{\log p/T} + \sqrt{1/p}$, we have
$$
\|\hat\bSigma_u^{\top} - \bSigma_u\| = O_P(\omega_T^{1-q} m_p)
$$
\end{thm}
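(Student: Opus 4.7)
The plan is to reduce the operator-norm bound on $\hat\bSigma_u^{\top}-\bSigma_u$ to a max-norm bound on $\hat\bSigma_u-\bSigma_u$, in the spirit of the adaptive thresholding arguments of Bickel--Levina and Cai--Liu, with the crucial preliminary step being a uniform entrywise rate for the residual-based sample covariance.

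Step 1: Show that $\|\hat\bSigma_u - \bSigma_u\|_{\max} = O_P(\omega_T)$. Writing $\hat u_{it} - u_{it} = \bb_i'\bff_t - \hat\bb_i'\hat\bff_t$ and expanding $\hat\sigma_{u,ij}-\sigma_{u,ij}$ into four pieces (a clean piece plus two cross terms plus a quadratic term), the clean piece $T^{-1}\sum_t u_{it}u_{jt}-\sigma_{u,ij}$ is uniformly $O_P(\sqrt{\log p/T})$ by Bernstein's inequality together with the sub-exponential tails from Assumption~\ref{assump2:factor}(iii) and a union bound over $p^2$ pairs. The remaining three pieces involve the estimation error of the latent factors and loadings. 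Since the columns of $\hat\bF/\sqrt{T}$ are the leading eigenvectors of $T^{-1}\bY'\bY$ and $\hat\bB = T^{-1}\bY\hat\bF$, I will invoke Theorem~\ref{thm3.2} applied to the factor model with $p\leftrightarrow p$ and $n\leftrightarrow T$: part (i) yields $\max_i \|\hat\bb_i - \bH \bb_i\| = O_P(\sqrt{1/\lambda_m})$ from the leading-eigenvector inner product, while part (ii), specifically the max-norm bound~(\ref{eeq3.6}) on the non-spiked part, delivers the uniform $1/\sqrt p$ contribution in the max-norm rate. Combining these and using Assumption~\ref{assump2:factor}(iv) to control $|b_{ij}|\leq M\sqrt{\lambda_j/p}$, the cross and quadratic pieces are each $O_P(\sqrt{\log p/T}+1/\sqrt p) = O_P(\omega_T)$ uniformly in $i,j$.

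Step 2: Convert the max-norm bound into an operator-norm bound. Because $\hat\bSigma_u^{\top}-\bSigma_u$ is symmetric, $\|\hat\bSigma_u^{\top}-\bSigma_u\|\le\|\hat\bSigma_u^{\top}-\bSigma_u\|_{\infty}=\max_i\sum_j|\hat\sigma_{u,ij}^{\top}-\sigma_{u,ij}|$. Fix a row $i$ and split the sum according to whether $|\sigma_{u,ij}|$ exceeds a calibrated multiple of $\omega_T$. The standard Bickel--Levina argument, adapted to the correlation-adaptive threshold $\tau_{ij}=C\omega_T(\hat\sigma_{u,ii}\hat\sigma_{u,jj})^{1/2}$ using $\hat\sigma_{u,ii}\overset{P}{\to}\sigma_{u,ii}$ bounded uniformly (which follows from Step 1 and Assumption~\ref{assump2:factor}(ii)), controls four cases: (a) $|\sigma_{u,ij}|>C'\omega_T$ and $|\hat\sigma_{u,ij}|>\tau_{ij}$, contributing $\omega_T$ per entry over at most $m_p\omega_T^{-q}$ entries; (b) $|\sigma_{u,ij}|>C'\omega_T$ but $|\hat\sigma_{u,ij}|\le\tau_{ij}$, controlled by $|\sigma_{u,ij}|$ which has sparse support; (c) $|\sigma_{u,ij}|\le C'\omega_T$ and $|\hat\sigma_{u,ij}|\le\tau_{ij}$, contributing $|\sigma_{u,ij}|\le\omega_T^{1-q}|\sigma_{u,ij}|^q$ summed against $m_p$; (d) $|\sigma_{u,ij}|\le C'\omega_T$ but $|\hat\sigma_{u,ij}|>\tau_{ij}$, again controlled by the max-norm bound and the sparsity constraint. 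Each case contributes $O_P(m_p\omega_T^{1-q})$.

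The main obstacle is Step 1, specifically producing the $1/\sqrt p$ correction in the max-norm rate under the regime $\lambda_m\to\infty$ but possibly $\lambda_m\ll p$. In this weak-factor regime the empirical eigenvectors $\hat\bxi_j$ have a non-negligible non-spiked component, so the naive bound $\|\hat\bxi_{jB}\|\asymp\sqrt{c_j}$ on the full non-spiked block would be too crude; only the sharper entrywise bound~(\ref{eeq3.6}) gives a rate that remains uniform over all $i\le p$ when we translate eigenvector errors into loading and factor errors. Verifying that these entrywise bounds propagate through the product $\hat\bb_i'\hat\bff_t$ uniformly in $(i,t)$, under Assumption~\ref{assump2:factor}(v) which provides the slack needed to absorb sub-exponential log-factors, is the heart of the argument.
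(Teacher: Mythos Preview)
Your reduction (Step~2) to the Bickel--Levina row-sum argument is fine and matches what the paper ultimately uses. The substantive difference is in Step~1, and here your plan has a real gap.

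First, the paper does not route through Theorem~\ref{thm3.2} at all. Instead it invokes a black-box Lemma~\ref{lem_Basic} (from Fan--Liao--Mincheva) which requires only
\[
\max_{i\le p}\;\frac{1}{T}\sum_{t=1}^T|\hat u_{it}-u_{it}|^2 = O_P(a_T^2)
\quad\text{and}\quad
\max_{i,t}|\hat u_{it}-u_{it}|=o_P(1),
\]
and then supplies $a_T=\omega_T$ by proving separate, elementary matrix-perturbation lemmas for $\|\hat\bF-\bF\bH'\|_F$, $\|\hat\bF-\bF\bH'\|_{\max}$, and $\|\hat\bB-\bB\bH^{-1}\|_{\max}$ (or $\hat\bB-\bB\bH'$). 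A key structural feature you are missing is that the paper splits into two regimes, $\lambda_m>C_1p$ versus $C_2\sqrt{p}(\log T)^{1/r_2}\le\lambda_m\le C_1p$: in the weak-factor regime $\bH$ need not be invertible, so the decomposition of $u_{it}-\hat u_{it}$ has to be changed (an extra term $\bb_i'(\bH'\bH-\bI_m)\bff_t$ appears, controlled by $\|\bH'\bH-\bI_m\|=O_P(c_m+\lambda_m^{-1/2}+T^{-1/2})$). This case split is what actually produces the $1/\sqrt{p}$ contribution to $\omega_T$ under the weak-factor assumption, not the max-norm bound~\eqref{eeq3.6}.

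Second, your intended use of~\eqref{eeq3.6} is illegitimate here: Theorem~\ref{thm3.2}(ii), including~\eqref{eeq3.6}, is proved only under the additional Gaussian assumption (its proof goes through the uniformly distributed vector $\bh_0$), whereas Theorem~\ref{thm_thresholding} assumes only the sub-Gaussian/sub-exponential tails of Assumptions~\ref{assump3} and~\ref{assump2:factor}. Moreover,~\eqref{eeq3.6} bounds $\|\hat\bxi_{jB}\|_{\max}$ in the basis that diagonalizes $\bSigma$; it does not immediately give $\max_i\|\hat\bb_i-\bH\bb_i\|$ in the original basis, and Theorem~\ref{thm3.2}(i) concerns the first $m$ coordinates of a single eigenvector in that diagonal basis, not the rows of $\hat\bB$. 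So both the hypothesis and the conclusion of the eigenvector theorem are mismatched with what Step~1 needs. Replace Step~1 by direct bounds on $\hat\bF-\bF\bH'$ and $\hat\bB-\bB\bH'$ (with the regime split) and then feed the residual $L^2$ rate into Lemma~\ref{lem_Basic}.
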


Given Theorem~\ref{thm_thresholding}, we are ready to start showing theorems in Section \ref{sec4}. The proofs are built based on conclusions in Section \ref{sec3}.

\begin{proof}[{Proof of Theorem~\ref{thm_RelSpectral}}]

We first prove the theorem for term $\Delta_{L1}$. Write $\bB = (\widetilde\bb_1, \dots, \widetilde\bb_m)$ and the minimizer of (\ref{minimization}) as $\hat\bB = (\hat\bb_1, \dots, \hat\bb_m)$. Since $\hat\bB$ is just the eigenvectors (unnormalized) of $\hat\bSigma$, we have:
\[
\hat\lambda_j = \|\hat\bb_j\|^2 \text{ and } \hat\xi_j = \hat\bb_j/\|\hat\bb_j\| \,.
\]
Then $\hat\lambda_j^S = \|\hat\bb_j\|^2 - \bar c p/n$ or $\hat\lambda_j^S = \|\hat\bb_j\|^2 - \hat c p/n$ if $\bar c$ is unknown. Let $\hat \bLambda = \diag(\|\hat\bb_1\|^2,\dots,\|\hat\bb_m\|^2)$ be the diagonal matrix of the first $m$ empirical eigenvalues and $\hat\bGamma = (\hat\bb_1/\|\hat\bb_1\|, \dots, \hat\bb_m/\|\hat\bb_m\|)$ be the empirical eigenvector matrix. In Sections \ref{sec3.1} and \ref{sec3.2}, our results for empirical eigenvalues and eigenvectors imply the following:
\beq \label{Eqn6.9}
\|\bLambda^{-1/2} (\hat\bLambda^S - \bLambda) \bLambda^{-1/2}\| = O_P(\lambda_m^{-1} \sqrt{p/T}+T^{-1/2})\,;
\eeq
and
\beq \label{Eqn6.10}
\|\hat\bGamma'\bGamma - \bD \| = O_P(\lambda_m^{-1} \sqrt{p/T}+T^{-1/2})\,,
\eeq
where $\bD = \diag((1+\bar c c_1)^{-1/2}, \dots, (1+\bar c c_m)^{-1/2})$. Now let us start to bound $\Delta_{L1}$ and $\Delta_{L2}$.
\begin{align*}
\Delta_{L1} & \le \|\bLambda^{-1/2} \bGamma' (\hat\bGamma \hat\bLambda^S \hat\bGamma' - \bGamma\bLambda\bGamma') \bGamma \bLambda^{-1/2}\| +  \|\bLambda^{-1/2} \bGamma' (\bGamma\bLambda\bGamma' - \bB\bB') \bGamma \bLambda^{-1/2}\| \\
& = : \Delta_{L1}^{(1)} + \Delta_{L1}^{(2)}\,.
\end{align*}
We handle the two terms separately.
\[
\Delta_{L1}^{(1)} \le \|\bLambda^{-1/2} \bGamma' (\hat\bGamma \hat\bLambda^S \hat\bGamma' - \bGamma\bD\bLambda\bD\bGamma') \bGamma \bLambda^{-1/2}\| \,,
\]
where we used $\bD^2 \preceq \bI$. The right hand side is further bounded by $I + 2 II + III$ with
\begin{align*}
I & = \|\bLambda^{-1/2} (\bGamma' \hat\bGamma - \bD) \hat\bLambda^S (\hat\bGamma' \bGamma - \bD) \bLambda^{-1/2}\| \,, \\
II & = \|\bLambda^{-1/2} (\bGamma' \hat\bGamma - \bD) \hat\bLambda^S \bD \bLambda^{-1/2}\|, \;\; III = \|\bLambda^{-1/2} \bD (\hat\bLambda^S - \bLambda) \bD \bLambda^{-1/2}\| \,.
\end{align*}
By equations (\ref{Eqn6.9}) and (\ref{Eqn6.10}), we conclude that $II$ and $III$ are of order $O_P(\lambda_m^{-1} \sqrt{p/T} + T^{-1/2})$ and $I$ is of smaller order. Thus $\Delta_{L1}^{(1)} = O_P(T^{-1/2})$. In order to derive rate of $\Delta_{L1}^{(2)}$, denote $\tilde\bLambda = \diag(\|\tilde\bb_1\|^2,\dots,\|\tilde\bb_m\|^2)$ and $\tilde\bGamma = (\tilde\bb_1/\|\tilde\bb_1\|, \dots, \tilde\bb_m/\|\tilde\bb_m\|)$ so that $\bB\bB' = \tilde\bGamma \tilde\bLambda \tilde\bGamma'$. We could treat $\Delta_{L1}^{(2)}$ similar to $\Delta_{L1}^{(1)}$. $\Delta_{L1}^{(2)}$
could be bounded by $I' + 2 II' + III'$ with
\begin{align*}
I' & = \|\bLambda^{-\frac12} (\bGamma' \tilde\bGamma - \bI) \tilde\bLambda (\tilde\bGamma' \bGamma - \bI) \bLambda^{-\frac12}\|, \\
II' & = \|\bLambda^{-\frac12} (\bGamma' \tilde\bGamma - \bI) \tilde\bLambda \bLambda^{-\frac12}\|,\; III' = \|\bLambda^{-\frac12} (\tilde\bLambda - \bLambda) \bLambda^{-\frac12}\| \,.
\end{align*}
By Weyl's theorem, $| \lambda_j - \|\tilde\bb_j\|^2| \le \|\bSigma_u\| \le C$, so $III' = O(1/\lambda_m)$. By $sin \theta$ theorem,
\[
\|\bGamma' \tilde\bGamma - \bI\| = \|\bGamma' (\tilde\bGamma - \bGamma)\| \le \|\tilde\bGamma - \bGamma\| \le C\|\bSigma_u\|/\lambda_m = O(1/\lambda_m)\,,
\]
so is $II'$. Since $I'$ is of smaller order, we conclude $\Delta_{L1}^{(2)} = O(1/\lambda_m)$. Therefore, $\Delta_{L1} \le \Delta_{L1}^{(1)} + \Delta_{L1}^{(2)} = O_P(T^{-1/2})$.

\medskip
The bound for term $\Delta_{L2}$ is derived in the following. Recall that
\[
\Delta_{L2} = \|\bTheta^{-\frac12} \bOmega' (\hat\bGamma \hat\bLambda^S \hat\bGamma' - \bB \bB') \bOmega \bTheta^{-\frac12} \|\,,
\]
which is bounded by
\[
\|\bTheta^{-\frac12} \bOmega' \hat\bGamma \hat\bLambda^S \hat\bGamma' \bOmega \bTheta^{-\frac12} \| + \|\bTheta^{-\frac12} \bOmega' \tilde\bGamma \tilde\bLambda \tilde\bGamma' \bOmega \bTheta^{-\frac12} \| =: \Delta_{L2}^{(1)} + \Delta_{L2}^{(2)}\,.
\]
\[
\Delta_{L2}^{(1)} \le \|\bTheta^{-1}\| \|\bOmega'\hat\bGamma\|^2 \|\hat\bLambda^S\| = O_P(p/T)\,,
\]
because by Lemma \ref{lem6.6}, $\|\bOmega'\hat\bGamma\| = O_P(\sqrt{c_m}) = O_P(\sqrt{p/(T\lambda_m)})$.
\[
\Delta_{L2}^{(2)} \le \|\bTheta^{-1}\| \|\bOmega'\tilde\bGamma\|^2 \|\tilde\bLambda\| = O_P(1/\lambda_m)\,,
\]
as $\|\bOmega'\tilde\bGamma\| = \|\bGamma\bGamma' - \tilde\bGamma\tilde\bGamma'\| = O(\|\bSigma_u\|/\lambda_m) = O_P(1/\lambda_m)$ by $\sin \theta$ Theorem. Finally, $\Delta_{L2}^{(1)} = O_P(p/T + 1/\lambda_m)$.

Finally let us look at term $\Delta_S$. Since $\Delta_S \le \|\bSigma^{-1}\| \|\hat\bSigma_u^{\top} - \bSigma_u\|$, it suffices to bound $\|\hat\bSigma_u^{\top} - \bSigma_u\|$, which has already been done in Theorem \ref{thm_thresholding}. So
\[
\Delta_S = O_P\Big(m_p \Big(\frac{\log p}{T}+\frac1p\Big)^{(1-q)/2}\Big) \,.
\]

\end{proof}

\begin{proof}[{Proof of Theorem \ref{thm_Risk}}]

The numerator of the relative risk is bounded by
\[
|\bw'(\hat\bGamma \hat\bLambda^{S} \hat\bGamma' - \bB\bB') \bw| + |\bw' (\hat\bSigma_u^{\top} - \bSigma_u) \bw| \,.
\]
The second term is bounded by $\|\hat\bSigma_u^{\top} - \bSigma_u\|\|\bw\|^2$, thus is $O_P(\Delta_S\|\bw\|^2)$. By using $\bw = (\bGamma, \bOmega) \bfeta$, the first term can be written as
\[
|(\bfeta_A' \bGamma' + \bfeta_B'\bOmega')\bE(\bGamma\bfeta_A + \bOmega\bfeta_B)| = O_P(\bfeta_A' \bGamma' \bE \bGamma\bfeta_A + \bfeta_B'\bOmega'\bE \bOmega\bfeta_B)\,,
\]
where $\bE = \hat\bGamma \hat\bLambda^{S} \hat\bGamma' - \bB\bB'$. It is easy to see from the proof of Theorem~\ref{thm_RelSpectral} that
\[
\bfeta_A' \bGamma' \bE \bGamma\bfeta_A = O_P(\Delta_{L1}\lambda_1\|\bfeta_A\|^2) \,,
\]

By Theorem~\ref{thm3.2}, $\|\bOmega' \hat\bGamma\|_{\max} = \max_{j} \|\hat\bxi_{jB}\|_{\max} = O_P(p/(T\lambda_m^{3/2}) + \sqrt{\log p/(n\lambda_m)})$. From proof of Theorem~\ref{thm_RelSpectral},
we know that $\| \bOmega' \tilde\bGamma\|_{\max} \le \| \bOmega' \tilde\bGamma\| = O_P(1/\lambda_m)$. Therefore,
\[
\bfeta_B'\bOmega'\bE \bOmega\bfeta_B \le \|\bfeta_B\|_1^2 O_P( \|\bOmega' \hat\bGamma\|_{\max}^2 \| \hat\bLambda^S\| + \| \bOmega' \tilde\bGamma\|_{\max}^2 \|\tilde\bLambda \|)\,,
\]
which gives $\bfeta_B'\bOmega'\bE \bOmega\bfeta_B = O_P(c_m^2 + \log p/T + \lambda_m^{-1})$.

The denominator is lower bounded by $\bw'\bSigma\bw \ge \lambda_m \|\bfeta_A\|^2 + c\|\bfeta_B\|^2$.
Thus the relative risk is of order
\[
O_P\Big(\frac{\Delta_{L1}\lambda_1\|\bfeta_A\|^2 + c_m^2 + \log p/T + \lambda_m^{-1}}{\lambda_m \|\bfeta_A\|^2 + c\|\bfeta_B\|^2} +\Delta_S \Big) = O_P\Big(T^{-\min \{\frac{2(\alpha+\beta-1)}{\beta}, \frac12\}} + m_p w_T^{1-q}\Big)\,,
\]
for $\alpha < 1$ if we plug in the convergence rate of $\Delta_{L1}$ and $\Delta_S$ in Theorem \ref{thm_RelSpectral}. If $\alpha \ge 1$, the relative risk is $O_P(T^{-1/2} + m_p w_T^{1-q})$.
Note the rate $O_P(T^{-2(\alpha+\beta-1)/\beta})$ comes from $c_m^2$ in the numerator. If we further assume $\|\bfeta_A\| \ge C_2$, this rate becomes $c_m^2/\lambda_m$ dominated by $T^{-1/2}$, thus the relative risk is again of order $O_P(T^{-1/2} + m_p w_T^{1-q})$.
\end{proof}

\medskip
\begin{proof}[{Proof of Theorem \ref{thm_FDP}}]

The proof follows Theorem 1 of \cite{FanHan13}. Using their notation, we have
\[
\widehat{\FDP}_U(t) - \FDP_A(t) = (\Delta_1 + \Delta_2)/R(t) + O(p^{\theta-1/2}\|\bmu^*\|)\,,
\]
where with $\widetilde\bW = (\bB'\bB)^{-1} (\bB'\bZ)$,
\begin{eqnarray*}
\Delta_{1} & = &\sum_{i = 1}^p \Big[ \Phi(\hat a_i (z_{t/2} + \hat\bb_i' \hat\bW)) - \Phi(a_i(z_{t/2} +\bb_i' \widetilde\bW))\Big] \,,\\
\Delta_{2} & = & \sum_{i = 1}^p \Big[ \Phi(\hat a_i (z_{t/2} - \hat\bb_i' \hat\bW)) - \Phi(a_i(z_{t/2} -\bb_i' \widetilde\bW))\Big].
\end{eqnarray*}
We just need to bound $\Delta_1$, then $\Delta_2$ can be bound similarly. As shown in \cite{FanHan13},
\[
|\Delta_1| \le C\Big(\sum_{j=1}^m |\hat\lambda_j^S - \lambda_j| + \lambda_j\|\hat\bxi_j^S - \bxi_j\| +  \sqrt{p}(\|\bmu^*\|+\sqrt{p}) \|\hat\bxi_j^S - \bxi_j\|  \Big)\,,
\]
where $\hat\lambda_j^S$ and $\hat\bxi_j^S$ are the $j^{th}$ eigenvalue and eigenvector of $\hat\bSigma^S$ defined in (\ref{Eqn:SPOET}). So by Weyl's theorem and Theorem \ref{thm_RelSpectral},
\begin{align*}
|\hat\lambda_j^S - \lambda_j| & \le \|\hat\bSigma^S - \bSigma\| = O_P(\Delta_{L1}\lambda_1 + \Delta_{L2} + \Delta_S) \\
& = O_P\Big(\frac{\lambda_1 }{\sqrt{T}} + \sqrt{\frac{\log p}{T}}+\frac{p}{T}\Big) = O_P\Big(\frac{p}{T} + \frac{p^{\alpha}}{\sqrt{T}}\Big)\,.
\end{align*}
By $sin \theta$ theorem, we also have $\|\hat\bxi_j^S - \bxi_j\| \le O_P(\|\hat\bSigma^S - \bSigma\|/\lambda_j)$. So finally
$$
|\Delta_1/R(t)| = O_P\Big( p^{\theta} \Big(\frac{1}{T} + \frac{p^{\alpha-1}}{\sqrt{T}} + \Big(\frac{\|\bmu^*\|}{\sqrt{p}} + 1\Big) \Big(\frac{p^{1-\alpha}}{T} + \frac{1}{\sqrt{T}}\Big) \Big) \Big)\,.
$$
Since $Cp^{1-\alpha} < Cp^{\beta} \le T$, 
$$
|\widehat{\FDP}_U(t) - \FDP_A(t)| = O_P\Big(p^{\theta}(\|\bmu^*\|p^{-\frac12} + T^{-\min\{\frac{\alpha+\beta-1}{\beta}, \frac12\}})\Big)\,.
$$

\end{proof}

\section{Convergence rate of error matrix} \label{secC}
In order to achieve convergence rate Theorem \ref{thm_thresholding} for the covariance matrix of idiosyncratic error, we employ the following lemma from \cite{FanLiaMin13}.

\begin{lem}  \label{lem_Basic}
Suppose that $(\log p)^{6\alpha} = o(T)$ where $\alpha = 3r_1^{-1}+1$ and Assumption \ref{assump1:factor} and \ref{assump2:factor} hold. In addition, suppose that there is a sequence $a_T = o(1)$ so that $\max_{i \le p} T^{-1} \sum_{t=1}^T |\hat u_{it} - u_{it}|^2 = O_P(a_T^2)$ and $\max_{i\le p, t\le T} |\hat u_{it} - u_{it}| = o_P(1)$. Then there is a constant $C>0$ in the adaptive thresholding estimator (\ref{thresholding}) with $\tau_{ij} = C \omega_T (\hat\sigma_{u,ii} \hat\sigma_{u,jj})^{1/2}$ and
\[
\omega_T = \sqrt{\frac{\log p}{T}} + a_T\,,
\]
such that
\[
\|\hat\bSigma_u^{\top} - \bSigma_u\| = O_P(\omega_T^{1-q} m_p)\,.
\]
\end{lem}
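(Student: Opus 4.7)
\textbf{Proof proposal for Lemma \ref{lem_Basic}.}

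The plan is a two-stage argument, standard in the adaptive thresholding literature (cf.\ Bickel--Levina, Cai--Liu): first establish a uniform elementwise bound $\max_{i,j\le p}|\hat\sigma_{u,ij}-\sigma_{u,ij}|=O_P(\omega_T)$; then pass from the elementwise bound to the operator norm bound by a deterministic argument that exploits the weak sparsity class indexed by $q$ and $m_p$. The role of the moment/tail condition $(\log p)^{6\alpha}=o(T)$ is to guarantee the uniform concentration holds at the prescribed rate $\sqrt{\log p/T}$ across all $p^2$ entries.

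\emph{Stage 1 (elementwise rate).} Write $\hat\sigma_{u,ij}-\sigma_{u,ij}=(\hat\sigma_{u,ij}-\tilde\sigma_{u,ij})+(\tilde\sigma_{u,ij}-\sigma_{u,ij})$, where $\tilde\sigma_{u,ij}=T^{-1}\sum_{t=1}^T u_{it}u_{jt}$ is the oracle version. For the oracle term, products of variables with $r_1$-exponential tails (Assumption \ref{assump2:factor}(iii)) are $r_1/2$-exponential, and together with the bounded second moment in Assumption \ref{assump2:factor}(ii) a Bernstein-type inequality for sub-exponential summands yields
\[
\mathbb P\!\left(|\tilde\sigma_{u,ij}-\sigma_{u,ij}|>x\right)\le 2\exp\!\left\{-cT\min(x^2, x^{r_1/(r_1+2)})\right\}.
\]
Taking a union bound over $p^2$ pairs and setting $x\asymp\sqrt{\log p/T}$, the condition $(\log p)^{6\alpha}=o(T)$ ensures the quadratic regime of Bernstein applies uniformly, giving $\max_{i,j}|\tilde\sigma_{u,ij}-\sigma_{u,ij}|=O_P(\sqrt{\log p/T})$. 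For the residual term, expand
\[
\hat u_{it}\hat u_{jt}-u_{it}u_{jt}=u_{it}(\hat u_{jt}-u_{jt})+u_{jt}(\hat u_{it}-u_{it})+(\hat u_{it}-u_{it})(\hat u_{jt}-u_{jt}),
\]
and apply Cauchy--Schwarz termwise using the hypothesis $\max_i T^{-1}\sum_t(\hat u_{it}-u_{it})^2=O_P(a_T^2)$ together with $\max_i T^{-1}\sum_t u_{it}^2=O_P(1)$ (from the tail assumption). This gives $\max_{i,j}|\hat\sigma_{u,ij}-\tilde\sigma_{u,ij}|=O_P(a_T)$, and combining yields the claimed elementwise rate $O_P(\omega_T)$.

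\emph{Stage 2 (operator norm via thresholding).} First note $\max_i|\hat\sigma_{u,ii}-\sigma_{u,ii}|=O_P(\omega_T)=o_P(1)$, so since $\sigma_{u,ii}$ is bounded above and below (by Assumption \ref{assump2:factor}(ii) and boundedness of $\|\bSigma_u\|_\infty$), with probability tending to one $\tau_{ij}=C\omega_T(\hat\sigma_{u,ii}\hat\sigma_{u,jj})^{1/2}\asymp\omega_T$ uniformly in $i,j$, reducing to universal thresholding at level $\asymp\omega_T$. For the operator norm, use $\|\hat\bSigma_u^\top-\bSigma_u\|\le\max_i\sum_j|\hat\sigma_{u,ij}^\top-\sigma_{u,ij}|$. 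Fix $i$ and split the sum by whether $|\sigma_{u,ij}|\ge\omega_T/2$:
\begin{itemize}
\item On the large-entry set, the cardinality is at most $m_p\omega_T^{-q}\cdot 2^q$ by the weak sparsity constraint $\sum_j|\sigma_{u,ij}|^q\le m_p$, and on this set, on the high-probability event of Stage 1, $|\hat\sigma_{u,ij}^\top-\sigma_{u,ij}|=O(\omega_T)$ (either $\hat\sigma_{u,ij}$ passes the threshold and is close to $\sigma_{u,ij}$, or one uses a simple Lipschitz property of the shrinkage $s_{ij}$), contributing $O(m_p\omega_T^{1-q})$.
\item On the small-entry set, either the entry is thresholded to zero, contributing $|\sigma_{u,ij}|\le|\sigma_{u,ij}|^q(\omega_T/2)^{1-q}$, or it survives, in which case $|\hat\sigma_{u,ij}|\ge\tau_{ij}\gtrsim\omega_T$ forces $|\hat\sigma_{u,ij}-\sigma_{u,ij}|\gtrsim\omega_T/2$, which happens only on a small set that can be controlled by $m_p\omega_T^{-q}$ as above. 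Either way this contribution is $O(m_p\omega_T^{1-q})$.
\end{itemize}
Maximizing over $i$ gives $\|\hat\bSigma_u^\top-\bSigma_u\|=O_P(m_p\omega_T^{1-q})$, as claimed.

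\emph{Main obstacle.} The delicate point is Stage 1: producing the uniform Bernstein inequality at rate $\sqrt{\log p/T}$ for the products $u_{it}u_{jt}$, whose tails are only $r_1/2$-exponential, and verifying that the growth condition $(\log p)^{6\alpha}=o(T)$ (with $\alpha=3/r_1+1$) is exactly what is needed to keep the union bound over $p^2$ pairs in the sub-Gaussian (quadratic) regime of Bernstein. Everything downstream — the reduction of adaptive to universal thresholding, and the weak sparsity argument for the operator norm — is deterministic once this concentration is in hand.
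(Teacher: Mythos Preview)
The paper does not actually prove this lemma: it is stated as a borrowed result, introduced with the sentence ``we employ the following lemma from \cite{FanLiaMin13}.'' There is no proof in the present paper at all. Your proposal therefore goes beyond what the paper does and reconstructs the argument from scratch; the two-stage template you follow (uniform elementwise concentration $\max_{i,j}|\hat\sigma_{u,ij}-\sigma_{u,ij}|=O_P(\omega_T)$, then the deterministic Bickel--Levina/Cai--Liu thresholding bound under weak $\ell_q$-sparsity) is exactly the route taken in the cited POET paper, so in content your approach and the underlying reference agree.

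One small caveat on Stage~1: the precise form of the Bernstein exponent you wrote, $\min(x^2,x^{r_1/(r_1+2)})$, is not the standard parameterization for sums of variables with $r_1/2$-exponential tails (the usual large-deviation exponent for such sums involves $T x^{r_1/(r_1+2)}$ or a truncation argument with a polylogarithmic correction), and the role of the constant $6\alpha=18/r_1+6$ is tied to a specific truncation scheme in \cite{FanLiaMin13} (which also covers the mixing case). Your high-level claim---that the growth condition $(\log p)^{6\alpha}=o(T)$ is precisely what forces the union bound over $p^2$ pairs to land in the quadratic regime of the tail inequality---is correct in spirit, but the explicit inequality should be checked against the reference rather than asserted in that particular form. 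Everything in Stage~2 is standard and correct.
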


The essential step of applying the previous lemma is to find $a_T$. We start by getting the convergence rate of $\hat\bF$ and $\hat\bB$. Let $\bV$ denote the $m \times m$ diagonal matrix of the first $m$ largest eigenvalues of the sample covariance matrix in decreasing order. Recall that
\[
\frac{1}{T} \bY' \bY \hat\bF = \hat\bF \bV\,.
\]
Define
\[
\bH = \frac{1}{T} \bV^{-1} \hat\bF' \bF \bB' \bB\,.
\]
\begin{lem} \label{lc.2}
The rates of convergence of $\hat\bF$ are as follows: \\
(i) $\| \hat\bF - \bF \bH'\|_F = O_P(\frac{p}{\lambda_m \sqrt{T}} + \sqrt{\frac{T}{\lambda_m}})$, \\
(ii) $\| \hat\bF - \bF \bH'\|_{\max} = O_P((\frac{1}{\sqrt{\lambda_m}} + \frac{p}{\lambda_m T} + \frac{\sqrt{p}}{\lambda_m} ) (\log T)^{\frac{2}{r_2}} )$,\\
\end{lem}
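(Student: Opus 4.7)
The plan is to begin from the eigen-equation $T^{-1}\bY'\bY\hat\bF=\hat\bF\bV$. Substituting $\bY=\bB\bF'+\bU$, expanding, and post-multiplying both sides by $\bV^{-1}$, the cross term $T^{-1}\bF\bB'\bB\bF'\hat\bF\bV^{-1}$ is recognized as $\bF\bH'$ directly from the definition of $\bH$, leaving the identity
\begin{equation*}
\hat\bF-\bF\bH'\;=\;\frac{1}{T}\bigl(\bF\bB'\bU+\bU'\bB\bF'+\bU'\bU\bigr)\hat\bF\bV^{-1}.
\end{equation*}
Both (i) and (ii) will reduce to bounding the three right-hand summands, in Frobenius and entrywise norm respectively. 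Throughout, $\|\bV^{-1}\|=O_P(\lambda_m^{-1})$ is available because Theorem~\ref{thm3.1} applies to the eigenvalues of $T^{-1}\bY'\bY$, which share the spiked part with the $p\times p$ sample covariance via the row-column swap of Section~\ref{sec2}.

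For (i), the operator-norm ingredients I would string together are $\|\hat\bF\|_F=\sqrt{mT}$ (exact, from $\hat\bF'\hat\bF=T\bI_m$), $\|\bF\|_F=O_P(\sqrt T)$, $\|\bB'\bU\|_F=O_P(\sqrt{T\lambda_1})$, and $\|\bU\|^2=O_P(p)$. The first two are elementary; the third follows from the entrywise variance $\Var[(\bB'\bU)_{jt}]=\sum_i b_{ij}^2\sigma_{u,ii}=O(\lambda_j)$ under Assumption~\ref{assump2:factor}(iv); the fourth follows from Lemma~\ref{lem6.1} applied to $\bU$ in the regime $p>T$. Chaining these, the first two summands each contribute $O_P(\sqrt{T/\lambda_m})$ and the third contributes $O_P(p/(\sqrt T\,\lambda_m))$, producing (i).

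For (ii), I exploit the diagonality of $\bV$: the $(t,k)$ entry of $\hat\bF-\bF\bH'$ is $(T\hat\lambda_k)^{-1}$ times one of $\bff_t'\bB'\bU\hat\bF\be_k$, $\bu_t'\bB\bF'\hat\bF\be_k$, or $\bu_t'\bU\hat\bF\be_k$. The first two I handle with the sub-exponential tails in Assumption~\ref{assump2:factor}(iii): a maximal inequality gives $\max_t\|\bff_t\|=O_P((\log T)^{1/r_2})$, a Bernstein bound for sums of sub-exponentials gives $\max_t\|\bu_t'\bB\|=O_P(\sqrt{\lambda_1}(\log T)^{1/r_1})$, and these combine with $\|\hat\bF\be_k\|=\sqrt T$ and the bounds from (i) to deliver the $(\log T)^{2/r_2}/\sqrt{\lambda_m}$ piece of the claim. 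For the cubic piece I split $\bu_t'\bU\hat\bF\be_k=\|\bu_t\|^2\hat f_{tk}+\sum_{s\ne t}(\bu_t'\bu_s)\hat f_{sk}$; a preliminary self-consistency argument (iterating the identity once at a coarser scale) gives $\max_{t,k}|\hat f_{tk}|=O_P((\log T)^{1/r_2})$, so combined with $\max_t\|\bu_t\|^2=O_P(p)$ the diagonal part produces the $p/(T\lambda_m)$ piece.

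The hard step is the off-diagonal sum $\sum_{s\ne t}(\bu_t'\bu_s)\hat f_{sk}$, where $\hat\bF\be_k$ is a nonlinear function of $\bu_t$ itself. A naive Cauchy--Schwarz bound $\|\bu_t\|\|\bU\|\sqrt T=O_P(p\sqrt T)$ only produces $p/(\sqrt T\,\lambda_m)$, which is too crude in the $p>T$ regime. I would decouple by a leave-one-out construction: let $\bY^{(-t)}$ be $\bY$ with its $t$-th column replaced by the signal part $\bB\bff_t$ and $\hat\bF^{(-t)}$ the corresponding eigenvector matrix, so $\hat\bF^{(-t)}\be_k$ is independent of $\bu_t$. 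Since $\bY'\bY-\bY^{(-t)\prime}\bY^{(-t)}$ is a rank-two perturbation, the Davis--Kahan $\sin\theta$ theorem bounds $\|\hat\bF\be_k-\hat\bF^{(-t)}\be_k\|$ at a rate negligible once divided by $T\lambda_m$, while the decoupled quadratic $\bu_t'\bU\hat\bF^{(-t)}\be_k$ is a conditionally sub-Gaussian linear form with variance $\|\bU\hat\bF^{(-t)}\be_k\|^2=O_P(pT)$, yielding the $\sqrt p/\lambda_m$ contribution up to log factors. Collecting the three pieces gives the bound in (ii).
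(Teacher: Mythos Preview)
Your derivation of the identity and your part (i) coincide with the paper's: same expansion $\hat\bF-\bF\bH'=T^{-1}(\bU'\bU+\bF\bB'\bU+\bU'\bB\bF')\hat\bF\bV^{-1}$, same ingredients $\|\hat\bF\|_F=O_P(\sqrt T)$, $\|\bU'\bU\|=O_P(p)$, $\|\bB'\bU\|_F=O_P(\sqrt{T\lambda_1})$, same rates.

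For part (ii) your route is genuinely different and substantially heavier. The paper avoids any decoupling of $\hat\bF$ from $\bU$ by a one-line $\ell_\infty$ trick: it bounds
\[
\|\bU'\bU\hat\bF\|_{\max}\le\|\bU'\bU\|_\infty\,\|\hat\bF\|_{\max},
\qquad
\|\bU'\bU\|_\infty=\max_t\sum_{s=1}^T|\bu_t'\bu_s|=O_P(T\sqrt p+p),
\]
and similarly $\|\bF\bB'\bU\hat\bF\|_{\max}\le m\|\bF\|_{\max}\|\bB'\bU\|_\infty\|\hat\bF\|_{\max}$ with $\|\bB'\bU\|_\infty=\max_k\sum_t|\tilde\bb_k'\bu_t|=O_P(T\sqrt{\lambda_1})$. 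Plugging $\|\hat\bF\|_{\max}=O_P((\log T)^{1/r_2})$ (which, as you correctly note, one obtains by a coarse self-consistency pass) into these two displays immediately yields the three pieces $1/\sqrt{\lambda_m}$, $\sqrt p/\lambda_m$, $p/(T\lambda_m)$ up to the $(\log T)^{2/r_2}$ factor. No conditional sub-Gaussian argument, no Davis--Kahan.

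Your leave-one-out scheme is a legitimate idea, but as sketched it risks not closing. The rank-two perturbation $\bY'\bY-\bY^{(-t)\prime}\bY^{(-t)}$ has spectral norm of order $\|\bu_t'\bY\|=O_P(\sqrt{T\lambda_1}+p)$, so Davis--Kahan against an eigengap $\asymp T\lambda_m$ gives $\|\hat\bF\be_k-\hat\bF^{(-t)}\be_k\|=O_P(\lambda_m^{-1/2}+p/(\sqrt T\lambda_m))$. Multiplying by the crude $\|\bu_t'\bU\|=O_P(p+\sqrt{Tp})$ and dividing by $T\lambda_m$ produces, among others, the term $p^2/(T^{3/2}\lambda_m^2)$; its ratio to the target $\sqrt p/\lambda_m$ is $p^{3/2}/(T^{3/2}\lambda_m)$, which is not controlled in the full regime of the theorem (e.g.\ $p\asymp T^2$, $\lambda_m\asymp p^{1/2+\epsilon}$). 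A more refined iteration might rescue it, but the bookkeeping is delicate, whereas the paper's $\ell_\infty$ bound on $\bU'\bU$ is both shorter and uniformly valid.
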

\begin{proof} (i) By definition of $\hat\bF$ and $\bH$
\[
\hat\bF - \bF \bH' = \frac{1}{T} (\bY'\bY - \bF\bB' \bB \bF') \hat\bF \bV^{-1}\,.
\]
Since $\|\hat\bF\|_F = O_P(\sqrt{T})$, $\|\bV^{-1}\| = O_P(1/\lambda_m)$ from Theorem \ref{thm3.1}, we have
\[
\| \hat\bF - \bF \bH' \|_F \le O_P\Big(\frac{1}{\lambda_m \sqrt{T}}\Big) \| \bU'\bU + \bF \bB' \bU + \bU' \bB \bF'\|\,,
\]
where we used the fact $\|\bA\bB\|_F \le \|\bA\| \|\bB\|_F$. By Lemma \ref{lem6.1},
\[
\|\frac{1}{T}\bU'\bU\| = \|\frac{1}{T}\bU \bU'\| \le \|\frac{1}{T}\bU\bU' - \bSigma_u\| + \|\bSigma_u\| = O_P(\frac{p}{T})\,,
\]
and since $\|\bB\|_{\max} = O_P(\sqrt{\lambda_1/p})$ from Assumption \ref{assump2:factor},
\[
\mathbb E\|\bB' \bU \|_F^2 = \sum_{t = 1}^T \sum_{j=1}^m \mathbb E(\sum_{i=1}^p u_{it} b_{ij})^2 \le \sum_{t = 1}^T \sum_{j=1}^m \sum_{i_1=1}^p \sum_{i_2=1}^p |\sigma_{u,i_1 i_2}| O(\frac{\lambda_1}{p}) = O(T\lambda_1)\,.
\]
Therefore by Markov inequalty,
$$
\|\bF \bB' \bU \| \le \|\bF\|_F \|\bB' \bU\| = O_P(T\sqrt{\lambda_1})\,.
$$
Hence,
$$
\| \hat\bF - \bF \bH' \|_F = O_P\Big(\frac{p}{\lambda_m \sqrt{T}} + \sqrt{\frac{T}{\lambda_m}}\Big)\,.
$$
(ii) From (i) we conclude
$$
\| \hat\bF - \bF \bH' \|_{\max} \le O_P\Big(\frac{1}{\lambda_m T}\Big) \| \bU'\bU\hat\bF + \bF \bB' \bU\hat\bF + \bU' \bB \bF' \hat\bF\|_{\max}\,.
$$
Let us bound each term separately. For the first term, $\| \bU'\bU\hat\bF\|_{\max} \le \| \bU'\bU\|_{\infty} \|\hat\bF\|_{\max}$ and
$$
\| \bU'\bU\|_{\infty} = \max_{t} \sum_{s = 1}^T |\bu_t' \bu_s| = \max_{t} \sum_{s = 1}^T \Big|\bu_t' \bu_s - \mathbb E[\bu_t' \bu_s]\Big| + \mathbb E[\bu_t' \bu_t] = O_P(T\sqrt{p} + p)\,.
$$
The second term is bounded as $\|\bF \bB' \bU\hat\bF\|_{\max} \le m \|\bF\|_{\max} \| \bB' \bU\|_{\infty} \|\hat\bF\|_{\max}$ and
$$
 \| \bB' \bU\|_{\infty} = \max_{k \le m} \sum_{t=1}^T |\tilde\bb_k' \bu_t| = O(T\sqrt{\lambda_1})\,,
$$
since $\var(\tilde\bb_k' \bu_t) = \tilde\bb_k' \bSigma_u \tilde\bb_k = O(\lambda_1)$. The third term can be bounded similarly. Together with the fact that $\|\bB\|_{\max} = O_P(\sqrt{\lambda_1/p})$ and $\|\bF\|_{\max} = O_P((\log T)^{1/r_2})$ from Assumption \ref{assump2:factor}, we obtain
\begin{align*}
\| \hat\bF - \bF \bH' \|_{\max} & \le O_P\Big(\frac{1}{\lambda_m T}\Big) \Big((p+T\sqrt{p})(\log T)^{\frac{1}{r_2}}+ T\sqrt{\lambda_1} (\log T)^{\frac{2}{r_2}} \Big) \\
& = O_P\Big( \Big(\frac{1}{\sqrt{\lambda_m}} + \frac{p}{\lambda_m T} + \frac{\sqrt{p}}{\lambda_m} \Big) (\log T)^{\frac{2}{r_2}} \Big)\,.
\end{align*}
\end{proof}

\begin{lem} \label{lc.3}
The rates of convergence for $\hat \bB$ are as follows. Two regimes are considered.

If $\lambda_m > C_1 p$ for constant $C_1 > 0$, we have \\
(i) $\|\bH^{-1}\| = O_P(1)$, \\
(ii) $\| \hat\bB - \bB \bH^{-1}\|_{\max} = O_P(\sqrt{\log p/T})$.

If $C_2\sqrt{p} (\log T)^{1/r_2} \le \lambda_m \le C_1 p$ for constant $C_2 > 0$, we have \\
(i') $\|\bH'\bH - \bI_m\| = O_P(c_m + 1/\sqrt{\lambda_m} + 1/\sqrt{T})$, \\
(ii') $\| \hat\bB - \bB \bH'\|_{\max} = O_P(\sqrt{\log p/T})$.
\end{lem}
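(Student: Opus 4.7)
The plan is to exploit two identities: $\hat\bB = T^{-1}\bY\hat\bF$, which links $\hat\bB$ to $\hat\bF$, and the normalization $T^{-1}\hat\bF'\hat\bF = \bI_m$. Writing $\bE := \hat\bF - \bF\bH'$ (whose size is already known from Lemma~\ref{lc.2}) and substituting $\bY = \bB\bF' + \bU$, the master decomposition
\[
\hat\bB - \bB\bH' \;=\; \bB\Bigl(\frac{\bF'\bF}{T}-\bI_m\Bigr)\bH' \;+\; \bB\cdot\frac{\bF'\bE}{T} \;+\; \frac{1}{T}\bU\hat\bF
\]
organizes the entire argument: the whole lemma follows from an entrywise bound on each of these three pieces, plus a spectral analysis of $\bH$ itself.

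I would first handle (i) and (i') by plugging $\hat\bF = \bF\bH' + \bE$ into $\hat\bF'\hat\bF/T = \bI_m$ to get
\[
\bI_m = \bH\Bigl(\frac{\bF'\bF}{T}\Bigr)\bH' + \bH\Bigl(\frac{\bF'\bE}{T}\Bigr) + \Bigl(\frac{\bE'\bF}{T}\Bigr)\bH' + \frac{\bE'\bE}{T}.
\]
The CLT gives $\|\bF'\bF/T - \bI_m\| = O_P(T^{-1/2})$; Cauchy--Schwarz with $\|\bF\|_{\text{op}} = O_P(\sqrt{T})$ bounds the cross terms by $O_P(\|\bH\|\,\|\bE\|_F/\sqrt{T})$ and the quadratic term by $O_P(\|\bE\|_F^2/T)$. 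A short bootstrap from this display shows $\|\bH\| = O_P(1)$, after which inserting the Frobenius rate from Lemma~\ref{lc.2}(i) yields $\|\bH\bH' - \bI_m\| = O_P(T^{-1/2} + c_m + \lambda_m^{-1/2})$. Since $\bH\bH'$ and $\bH'\bH$ have the same eigenvalues, this is (i'); in regime~1 ($\lambda_m \asymp p$) the bound collapses to $O_P(T^{-1/2} + p^{-1/2})$, and the identity $\|\bH^{-1}\|^2 = 1/\lambda_{\min}(\bH\bH')$ then gives (i).

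For (ii) and (ii'), I would bound each of the three pieces of the master decomposition in max norm. The first two admit the simple estimate $|(\bB\bM)_{ij}| \le \|\bb_i\|\,\|\bM\|_{\text{op}}$ combined with $\|\bb_i\| = O(\sqrt{\lambda_1/p})$ from Assumption~\ref{assump2:factor}(iv), producing contributions of order $\sqrt{\lambda_m/p}\cdot(T^{-1/2} + \lambda_m^{-1/2} + c_m)$, which is $O_P(\sqrt{\log p/T})$ under the assumed relation between $p$, $T$ and $\lambda_m$. The third piece, $T^{-1}\bU\hat\bF$, is split as $T^{-1}\bU\bF\bH' + T^{-1}\bU\bE$: Bernstein's inequality for the sub-Gaussian products $u_{it}f_{tk}$ (genuinely independent by Assumption~\ref{assump2:factor}) gives $\|T^{-1}\bU\bF\|_{\max} = O_P(\sqrt{\log p/T})$, and the inequality $\|\bA\bH'\|_{\max} \le \sqrt{m}\,\|\bA\|_{\max}\,\|\bH'\|_{\text{op}}$ (valid because $m$ is fixed) transfers this to the rotated quantity; Cauchy--Schwarz with Lemma~\ref{lc.2}(ii) handles the tail term $T^{-1}\bU\bE$. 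Finally, to convert (ii') into (ii) in regime~1, I write $\bH^{-1} - \bH' = \bH'[(\bH\bH')^{-1} - \bI_m]$ to obtain $\|\bH^{-1} - \bH'\|_{\text{op}} = O_P(T^{-1/2} + p^{-1/2})$, from which $\|\bB(\bH^{-1}-\bH')\|_{\max} \le \max_i\|\bb_i\|\cdot O_P(T^{-1/2}+p^{-1/2}) = O_P(\sqrt{\log p/T})$.

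The main obstacle I anticipate is the Bernstein step for $T^{-1}\bU\bF\bH'$: the columns of $\bH'$ depend on $\bU$ through $\hat\bF$, so one cannot concentrate the inner product $T^{-1}\sum_t u_{it}\bff_t'\bH'_{\cdot j}$ directly. The clean workaround is precisely the factorization above --- first establish $\|T^{-1}\bU\bF\|_{\max} = O_P(\sqrt{\log p/T})$ using only the independence of $\bU$ and $\bF$, then absorb the $O_P(1)$ factor $\|\bH'\|_{\text{op}}$ through a deterministic matrix inequality rather than a scalar tail bound. A parallel issue for $T^{-1}\bU\bE$ is dissolved using Lemma~\ref{lc.2}(ii) together with a uniform tail bound on $\max_t|u_{it}|$ from the sub-exponential tail in Assumption~\ref{assump2:factor}(iii).
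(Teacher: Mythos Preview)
Your organization for (i) and (i') is sound and essentially equivalent to the paper's: the paper also reduces to bounding $T^{-1}\|\bF'(\hat\bF-\bF\bH')\|$ and $\|\bF'\bF/T-\bI_m\|$, and arrives at the same rate. The conversion from (ii') to (ii) via $\bH^{-1}-\bH' = \bH'[(\bH\bH')^{-1}-\bI_m]$ is also fine in regime~1.

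The genuine gap is in your treatment of the cross term $T^{-1}\bU\bE$ in regime~2. Cauchy--Schwarz gives at best
\[
\Bigl|T^{-1}\sum_{t}u_{it}E_{tj}\Bigr| \;\le\; \Bigl(T^{-1}\sum_t u_{it}^2\Bigr)^{1/2}\Bigl(T^{-1}\sum_t E_{tj}^2\Bigr)^{1/2}
\;=\; O_P(1)\cdot O_P\bigl(c_m+\lambda_m^{-1/2}\bigr),
\]
using $T^{-1}\|\bE\|_F^2 = O_P(c_m^2+\lambda_m^{-1})$ from Lemma~\ref{lc.2}(i). In regime~2, $\lambda_m$ can be as small as $C_2\sqrt{p}(\log T)^{1/r_2}$, so $c_m$ can be of order $\sqrt{p}/T$ (up to logs), which dominates $\sqrt{\log p/T}$ whenever $p\gg T\log p$ --- and this is explicitly allowed. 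Replacing the Frobenius norm by $\|\bE\|_{\max}$ from Lemma~\ref{lc.2}(ii) does not help either: that bound carries a term $\sqrt{p}(\log T)^{2/r_2}/\lambda_m$, which at the lower edge of regime~2 is only $O((\log T)^{1/r_2})$, not even $o(1)$. The point is that $\bE$ depends on $\bU$, and any inequality that treats $u_{it}$ and $E_{tj}$ as generic sequences destroys the cancellation in $\sum_t u_{it}E_{tj}$.

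The paper avoids this by not bounding $\bU\bE$ via norms of $\bE$ at all. Instead it substitutes the explicit representation
\[
\bE \;=\; \hat\bF-\bF\bH' \;=\; \frac{1}{T}\bigl(\bU'\bU+\bF\bB'\bU+\bU'\bB\bF'\bigr)\hat\bF\bV^{-1}
\]
back into $T^{-1}\bU\bE$ and bounds each of the resulting pieces $\bU\bU'\bU\hat\bF$, $\bU\bF\bB'\bU\hat\bF$, $\bU\bU'\bB\bF'\hat\bF$ in max norm separately, using $\|\bU\bU'\bU\|_\infty$, $\|\bB'\bU\|_\infty$, and $\|\bB'\bU\bU'\|_{\max}$. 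This structural expansion is what recovers the target $O_P(\sqrt{\log p/T})$ rate; your proposal needs the same substitution (or an argument of equivalent strength) for the regime~2 part of (ii').
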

\begin{proof}
(i') From Lemma \ref{lc.2} (i) we have
$$
\|\bF' (\hat\bF - \bF\bH')\|_F \le O_P\Big(\frac{1}{\lambda_m}\Big) \Big(\frac{1}{T} \|\bF'\bU' \bU\bF\| + \frac{1}{T}\|\bF'\bU' \bU\|\|\hat\bF -\bF\bH\| +2 \|\bF \bB' \bU \| \Big)\,.
$$
We claim $\|\bF'\bU'\| = O_P(\sqrt{T p})$. Hence,$\|\bF' (\hat\bF - \bF\bH')\|_F = O_P(p/\lambda_m) + O_P(T/\sqrt{\lambda_m})$. With this, we bound $\|\bH'\bH - \bI_m\|$. First obviously $\|\bH\| = O_P(1)$ since $\lambda_1/\lambda_m$ is bounded. Then from \cite{FanLiaMin13}, we know
\begin{align*}
\|\bH'\bH - \bI_m\| & \le \frac{1}{T} \|\bF'(\hat\bF - \bF\bH')\| (1+\|\bH\|) + \|\bH\|^2\|\bF'\bF/T - \bI_m\| \\
& = O_P(c_m + 1/\sqrt{\lambda_m} + 1/\sqrt{T})\,.
\end{align*}
It remains to show that $\|\bF'\bU'\| = O_P(\sqrt{Tp})$. By definition,
$$
\|\bF'\bU'\bU\bF\| = \|\bU\bF \bF'\bU'\| = \sup_{\bx \in S^{p-1}} \|\bF'\bU'\bx\|^2 \le 2 \sup_{\bx \in \mathcal N}  \|\bF'\bU'\bx\|^2 \,,
$$
where $\mathcal N$ is a $1/4$-net of the unit sphere $S^{p-1}$ and $|\mathcal N| \le 9^p$. Since$\|\bF'\bU'\bx\|^2 = \sum_{k=1}^m (\sum_{t \le T} f_{kt} \bu_t'\bx)^2 \le mCT \sum_{t \le T} (\bu_t'\bx)^2$, using Chernoff bound, we have
$$
\mathbb P\Big(\|\bF'\bU'\bU\bF\| \ge t\Big) \le 9^p \cdot e^{-\frac{\theta t}{2CmT}} (\mathbb E[e^{\theta(\bu_t'\bx)^2}])^T\,.
$$
$\bu_t'\bx$ is sub-Gaussian, so choosing $t \asymp Tp$, we obtain that $\|\bU\bF\| = O_P(\sqrt{T p})$.

(i) In (i'), we showed $\|\bH'\bH - \bI_m\| = O_P(c_m + 1/\sqrt{\lambda_m} + 1/\sqrt{T})$.
If in addition, we know $\lambda_m \ge C_1 p$, then $c_m = o(1)$ so that $\|\bH'\bH - \bI_m\| = o_P(1)$. So we conclude $\lambda_{\min}(\bH'\bH) > 1/2$ with probability approaching one according to Weyl's Theorem. Thus $\|\bH^{-1}\| = O_P(1)$.

(ii) Decompose $\hat\bB - \bB \bH^{-1}$ as follows:
\begin{align*}
\hat\bB - \bB \bH^{-1} & = \frac{1}{T} \bY \hat\bF - \bB\bH^{-1} \\
& = \frac{1}{T} \bB \bH^{-1}(\bH \bF' - \hat\bF') \hat\bF + \frac{1}{T} \bU (\hat\bF - \bF\bH') + \frac{1}{T} \bU\bF\bH'\,.
\end{align*}
\cite{FanLiaMin13} showed that
$$
\frac{1}{T} \|\bU\bF\|_{\max} = \max_{i \le p, k \le m} \Big| \frac{1}{T}\sum_{t=1}^T u_{it} f_{tk} \Big| = O_P\Big(\sqrt{\frac{\log p}{T}}\Big)\,.
$$
Thus the max norm of the last term is $O_P(\sqrt{\log p/T})$. The max norms of the first  and second terms are bounded respectively by
$$
\frac{m}{T} \|\bB\|_{\max} \|\bH^{-1}\| \|\bH \bF' - \hat\bF'\|_{\max} \cdot \sqrt{T} \|\hat\bF\|_F = O_P\Big( \Big(\frac{1}{\sqrt{\lambda_m}} + \frac{1}{\sqrt{p}} + \sqrt{\frac{c_m}{T}}\Big) (\log T)^{\frac{2}{r_2}}\Big)\,,
$$
and by
\begin{align*}
& O_P\Big(\frac{1}{\lambda_m T^2}\Big) \| \bU \bU'\bU\hat\bF + \bU\bF \bB' \bU\hat\bF + \bU\bU' \bB \bF' \hat\bF\|_{\max} \\
& \le O_P\Big(\frac{1}{\lambda_m T^2}\Big) \Big( \|\bU\bU'\bU\|_{\infty} \|\hat\bF\|_{\max} + m \|\bU\bF\|_{\max} \|\bB'\bU\|_{\infty} \|\hat\bF\|_{\max} + T\|\bB'\bU\|_{\infty} \|\bU\|_{\max}  \Big)
\\
& = O_P\Big(\frac{1}{\lambda_m T^2}\Big) \Big( T\sqrt{pT} (\log T)^{\frac{1}{r_2}} + \sqrt{T\log p} (T\sqrt{\lambda_1}) (\log T)^{\frac{1}{r_2}} + T^2\sqrt{\lambda_1} (\log (pT))^{\frac{1}{r_1}} \Big)  \,.
\end{align*}

Simplify and Combine the rates together, and note $\lambda_m > C_1 p$ in this case and $\sqrt{p} (\log T)^{1/r_2} = o(\lambda_m)$, we obtain,
$$
\|\hat\bB - \bB \bH^{-1}\|_{\max} = O_P\Big( \frac{1}{\sqrt{p}} \Big((\log T)^{\frac{2}{r_2}} +(\log (pT))^{\frac{1}{r_1}}\Big)  + \sqrt{\frac{\log p}{T}} \Big) = O_P\Big( \sqrt{\frac{\log p}{T}} \Big) \,.
$$

(ii') Now let us consider the other situation. We have a different decomposition of $\hat\bB-\bB\bH'$:
\begin{align*}
\hat\bB - \bB \bH' & = \frac{1}{T} \bY \hat\bF - \bB\bH' \\
& = \frac{1}{T} \bB \bF'(\hat\bF - \bF\bH') + \bB(\frac{1}{T}\bF'\bF - \bI_m) \bH' + \frac{1}{T} \bU (\hat\bF - \bF\bH') + \frac{1}{T} \bU\bF\bH'\,.
\end{align*}
As before, the max norm of the last term is $O_P(\sqrt{\log p/T})$. The max norms of the first three terms are bounded respectively by
\[
\frac{\sqrt{m}}{T} \|\bB\|_{\max} \|\bF'(\hat\bF - \bF\bH')\|_F = O_P(\sqrt{c_m/T} + 1/\sqrt{p})\,;
\]
\[
\sqrt{m} \|\bB\|_{\max} \|\frac{1}{T}\bF'\bF - \bI_m\| \|\bH'\| = O_P(\sqrt{\lambda_1/(pT)})\,;
\]
and
\begin{align*}
& O_P\Big(\frac{1}{\lambda_m T^2}\Big) \| \bU \bU'\bU\hat\bF + \bU\bF \bB' \bU\hat\bF + \bU\bU' \bB \bF' \hat\bF\|_{\max} \\
& \le O_P\Big(\frac{1}{\lambda_m T^2}\Big) \Big( T\sqrt{pT} (\log T)^{\frac{1}{r_2}} + \sqrt{T\log p} (T\sqrt{\lambda_1}) (\log T)^{\frac{1}{r_2}} + T\|\bB'\bU\bU'\|_{\max} \Big)  \,,
\end{align*}
where $\|\bB'\bU\bU'\|_{\max} = O_P(T\sqrt{\lambda_1/p} +\sqrt{\lambda_1\log p})$ is quite small.

Simplify and Combine the rates together, we obtain,
$$
\|\hat\bB - \bB \bH'\|_{\max} = O_P\Big(\frac{\sqrt{p} (\log T)^{1/r_2}  }{\lambda_m \sqrt{T}}+ \sqrt{\frac{\lambda_1}{pT}} + \sqrt{\frac{\log p}{T}} \Big) = O_P\Big(\sqrt{\frac{\log p}{T}}\Big)\,.
$$

\end{proof}

\bf{Proof of Theorem \ref{thm_thresholding}
\begin{proof}
Recall that $\hat u_{it} = y_{it} - \hat\bb_i'\hat\bff_t$. We separately consider the two cases in Lemma \ref{lc.3}. If $\lambda_m > C_1 p$, so $\bH^{-1}$ is well defined. We have
$$
u_{it} - \hat u_{it} = (\hat\bb_i' - \bb_i'\bH^{-1})(\hat \bff_t - \bH \bff_t) + \bb_i'\bH^{-1}(\hat \bff_t - \bH \bff_t) + (\hat\bb_i' - \bb_i'\bH^{-1})\bH \bff_t\,.
$$
Therefore by Cauchy-Schwarz,
\begin{align*}
\max_{i \le p} T^{-1} \sum_{t=1}^T |\hat u_{it} - u_{it}|^2 \le & 3 \max_i \|\bb_i'\bH^{-1}\|^2 \frac{1}{T} \|\hat\bF - \bF \bH'\|_F^2 \\
& + 3 m \|\hat\bB - \bB \bH^{-1}\|_{\max}^2 \frac{1}{T} \|\hat\bF - \bF \bH'\|_F^2 \\
& + 3 m  \|\hat\bB - \bB \bH^{-1}\|_{\max}^2 \frac{1}{T} \sum_{t=1}^T \|\bH\bff_t\|^2\,.
\end{align*}
It follows from Lemma \ref{lc.2} and \ref{lc.3} (ii) that
\begin{align*}
\max_{i \le p} T^{-1} \sum_{t=1}^T |\hat u_{it} - u_{it}|^2 = O_P\Big(\frac{\lambda_1}{pT} \Big(\frac{p^2}{\lambda_m^2 T} + \frac{T}{\lambda_m}\Big) + \frac{\log p}{T} \Big) = O_P\Big(\frac{\log p}{T} + \frac1p\Big)\,.
\end{align*}
Replacing the average over $t$ in the above inequality with maximum over $t$ and $T^{-1} \|\hat\bF - \bF\bH'\|_F^2$ with $m \|\hat\bF - \bF\bH'\|_{\max}^2$, we can also derive bound for $\max_{i\le p, t\le T} |\hat u_{it} - u_{it}|$. Since from Assumption \ref{assump2:factor} we have $\max_{t \le T} \|\bff_t\| = O_P((\log T)^{1/r_2})$, we get $\max_{i\le p, t\le T} |\hat u_{it} - u_{it}| = o_P(1)$.

Now if $C_2\sqrt{p} (\log T)^{1/r_2} \le \lambda_m \le C_1 p$, we apply a different way of decomposing $u_{it} - \hat u_{it}$.
$$
u_{it} - \hat u_{it} = (\hat\bb_i' - \bb_i'\bH')(\hat \bff_t - \bH \bff_t) + \bb_i'\bH'(\hat \bff_t - \bH \bff_t) + (\hat\bb_i' - \bb_i'\bH')\bH \bff_t + \bb_i'(\bH'\bH -\bI_m) \bff_t\,.
$$
Therefore by Cauchy-Schwarz,
\begin{align*}
\max_{i \le p} T^{-1} \sum_{t=1}^T |\hat u_{it} - u_{it}|^2 \le & 4 \max_i \|\bb_i'\bH'\|^2 \frac{1}{T} \|\hat\bF - \bF \bH'\|_F^2 \\
& + 4 m \|\hat\bB - \bB \bH'\|_{\max}^2 \frac{1}{T} \|\hat\bF - \bF \bH'\|_F^2 \\
& + 4 m  \|\hat\bB - \bB \bH'\|_{\max}^2 \frac{1}{T} \sum_{t=1}^T \|\bH\bff_t\|^2 \\
& + 4 \max_i \|\bb_i\|^2 \|\bH'\bH -\bI\|  \frac{1}{T} \sum_{t=1}^T \|\bff_t\|^2 \,.
\end{align*}
It follows from Lemma \ref{lc.2} and \ref{lc.3} (ii') that
\begin{align*}
\max_{i \le p} T^{-1} \sum_{t=1}^T |\hat u_{it} - u_{it}|^2 & = O_P\Big(\frac{\lambda_1}{pT} \Big(\frac{p^2}{\lambda_m^2 T} + \frac{T}{\lambda_m}\Big) + \frac{\log p}{T} + \frac{\lambda_1}{pT} \Big) = O_P\Big(\frac{\log p}{T}+\frac1p\Big)\,.
\end{align*}
Again, it is not hard to show $\max_{i\le p, t\le T} |\hat u_{it} - u_{it}| = o_P(1)$.

Finally, Lemma \ref{lem_Basic} concludes the theorem by choosing $a_T = \sqrt{\log p/T} + \sqrt{1/p}$ in both cases.
\end{proof}

\bibliographystyle{ims}
\bibliography{Reference}

\end{document}